\documentclass[11pt]{article}
\usepackage{amsmath,amssymb,amsthm}
\usepackage{mathtools}
\usepackage{graphics,epsfig}
\usepackage{hyperref}
\usepackage{natbib}
\usepackage{color}
\usepackage{graphicx}
\usepackage{caption}
\usepackage{subcaption}
\usepackage{float}
\usepackage{mathrsfs}
\usepackage{multirow}
\usepackage{comment}
\usepackage{setspace}
\usepackage{bbm}
\usepackage{bm}
\usepackage{amsfonts}
\usepackage{xcolor}
\usepackage{pslatex}
\usepackage{setspace}
\usepackage{bbm}
\usepackage{booktabs}

\newtheorem{remark}{\bf Remark}
\newtheorem{rmk}[remark]{\bf Remark}

\newtheorem{theorem}{\bf Theorem}

\newtheorem{prop}[theorem]{\bf Proposition}


\begin{document}

	\title{\bfseries Optimal Harvesting under Stochastic Control: HJB Equation and Feynman–Kac Representation }

	\author{
		Paramahansa Pramanik$^{1,3}$ \and 
		Fatamatuj Johora$^{2}$
	}
	
	\date{
		\small
		$^{1}$ Department of Mathematics and Statistics, University of South Alabama, Mobile, AL 36688, United States.\\
		\texttt{fj2521@jagmail.southalabama.edu}\\
		\vspace{0.5em}
		$^{2}$ Department of Mathematics and Statistics, University of South Alabama, Mobile, AL 36688, United States.\\
		$^{3}$Corresponding author, \texttt{ppramanik@southalabama.edu}
	}
	\maketitle
	
	\begin{abstract}
		Sustainable resource management requires harvesting strategies that account for environmental variability and ecological uncertainty. This study investigates optimal harvesting of renewable biological resources within a stochastic framework, where population dynamics are influenced by random environmental fluctuations and modeled using stochastic differential equations. Two complementary approaches are employed: the Hamilton–Jacobi–Bellman (HJB) equation and the Feynman–Kac representation. The HJB framework provides a dynamic optimization rule and characterizes the value function through a nonlinear partial differential equation, while the Feynman–Kac approach offers a probabilistic interpretation of expected returns. A comparative analysis demonstrates the theoretical consistency and practical relevance of both methods for designing economically efficient and ecologically sustainable harvesting policies under uncertainty.
		
	\end{abstract}
	
	{\bf Keywords:} Optimal harvesting, stochastic differential equations, Hamilton–Jacobi–Bellman equation, Feynman-Kac formula, stochastic control, environmental volatility.
	
	\section{Introduction:}
	In this study, we investigate the problem of determining an optimal harvesting strategy for a renewable biological resource when population growth is influenced by unpredictable environmental variation. Natural systems rarely evolve in a perfectly stable manner; changes in climate conditions, habitat quality, resource availability, and other ecological factors can cause population levels to fluctuate over time. To represent this uncertainty, we model the resource dynamics using a generalized growth combined with stochastic variation, allowing the population to experience random deviations from its average growth trajectory \citep{pramanik2020optimization,pramanik2023semicooperation}. The objective is to identify a harvesting policy that balances economic return with long-term resource persistence by maximizing the expected discounted profit accumulated over a finite planning horizon. Such a framework reflects the practical challenges faced in fisheries, forestry, wildlife management, and other environmental systems where decision makers must act despite incomplete knowledge of future conditions \citep{kakkat2026angiotensin}. To address this problem, we employ two complementary mathematical approaches from stochastic control theory. The first is the Hamilton-Jacobi-Bellman (HJB) equation, which provides a systematic method for identifying the harvesting effort that yields the greatest expected benefit at every point in time. Through this approach, the optimization problem is translated into a value function that measures the best attainable future return from any given population state \citep{pramanik2020motivation}. The HJB formulation offers a clear and structured description of how harvesting decisions should adapt to changing ecological conditions and uncertainty. By connecting population dynamics, economic rewards, and future expectations within a single framework, it provides valuable insight into the trade-offs between immediate extraction and resource conservation.
	
	Alongside the HJB formulation, we examine a probabilistic perspective based on the Feynman-Kac \citep{kac_1949} representation and its associated path-integral interpretation. Rather than focusing solely on the evolution of an optimal value function through a governing equation, this approach considers the collection of all possible future population trajectories that may arise under environmental uncertainty. Each potential path contributes to the overall expected return, with more favorable outcomes exerting a greater influence on the final valuation. In this sense, the path-integral approach provides an intuitive description of decision making under uncertainty by viewing the value of a harvesting policy as the cumulative effect of many possible ecological futures rather than a single predicted outcome. This interpretation is particularly appealing in environmental applications because natural systems often exhibit substantial variability, making it important to account for a broad range of possible scenarios \citep{pramanik2021optimala,pramanik2021scoring}. The Feynman-Kac representation therefore complements the deterministic optimization perspective of the HJB equation by revealing how uncertainty propagates through the system and influences long-term economic performance. A central goal of this study is to compare these two viewpoints and demonstrate that, despite their distinct conceptual foundations, they lead to the same optimal harvesting strategy. By highlighting both their similarities and differences, we show that the HJB and Feynman-type path-integral approaches provide consistent and mutually reinforcing tools for analyzing sustainable resource management in uncertain environments. Together, these methods offer a richer understanding of how ecological variability, economic objectives, and management decisions interact in the design of effective harvesting policies \citep{pramanik2024estimation,vikramdeo2024mitochondrial}.
	
	The motivation for this study arises from the increasing challenges associated with managing renewable natural resources in a world characterized by environmental variability and economic uncertainty. Many traditional harvesting models are built on simplifying assumptions that treat population growth as predictable and market conditions as relatively stable. While such assumptions are useful for developing basic theoretical insights, they often fail to capture the realities faced by resource managers \citep{pramanik2024motivation,bulls2025assessing}. Fisheries, forests, wildlife populations, and other renewable resources are continually influenced by changing environmental conditions, including fluctuations in temperature, rainfall, habitat quality, disease outbreaks, and other ecological disturbances. At the same time, economic conditions can shift unexpectedly due to variations in consumer demand, production costs, policy changes, and broader market forces. These interacting sources of uncertainty make resource management far more complex than suggested by deterministic models. Decisions that appear profitable under average conditions may become unsustainable when environmental shocks reduce population abundance, while overly conservative strategies may sacrifice substantial economic benefits when favorable conditions occur. Consequently, there is a growing need for analytical frameworks that explicitly recognize uncertainty as an inherent feature of natural resource systems rather than treating it as a secondary consideration. By incorporating random fluctuations directly into the modeling process, stochastic approaches provide a more realistic representation of how ecological and economic systems evolve through time \citep{hertweck2023clinicopathological,khan2023myb}. Such models can help identify harvesting policies that remain effective across a wide range of possible future conditions, thereby supporting management strategies that are both economically beneficial and environmentally responsible. This perspective is particularly important in the context of global environmental change, where increasing climatic variability and human pressures are making future resource dynamics more difficult to predict.
	
	To address these challenges, this study develops a comprehensive framework for analyzing optimal harvesting under uncertainty through the use of stochastic control methods. The central objective is not only to determine how harvesting effort should be adjusted in response to changing resource conditions, but also to understand how different mathematical perspectives can be used to evaluate management decisions in uncertain environments. Particular attention is given to the comparison between the HJB equation and the Feynman-Kac representation. Although these methods originate from different conceptual viewpoints, both are designed to evaluate the long-term consequences of decisions made under uncertainty \citep{kakkat2023cardiovascular,khan2023myb}. The HJB framework provides a structured optimization perspective by identifying the harvesting policy that maximizes expected future benefits while accounting for the changing condition of the resource. In contrast, the Feynman-Kac approach views the problem through the lens of possible future system trajectories, linking management outcomes to the collection of ecological paths that may emerge under random environmental influences. Examining these approaches together offers valuable insight into the relationship between deterministic optimization and probabilistic interpretation. Rather than treating uncertainty as a complication to be avoided, both methods incorporate it directly into the decision-making process, allowing managers to evaluate risks and opportunities in a systematic manner. Through this comparative framework, the study seeks to demonstrate how modern stochastic methods can contribute to the development of harvesting strategies that maintain ecological resilience while supporting sustainable economic returns \citep{kakkat2023cardiovascular,khan2024mp60}. By bridging applied mathematics and environmental science, the analysis provides a practical foundation for understanding how uncertainty shapes resource management decisions and how robust harvesting policies can be designed in the presence of unpredictable ecological and economic conditions \citep{maki2025new}.
	
	By combining the HJB equation with the Feynman-Kac representation, this study seeks to develop a unified understanding of optimal harvesting in environments where both ecological conditions and economic circumstances change unpredictably over time. Resource management decisions are rarely made in settings where all relevant information is known with certainty. Instead, managers must often respond to shifting population levels, changing environmental conditions, and fluctuations in the economic value of harvested products. These uncertainties can interact in complex ways, making it difficult to identify harvesting strategies that remain effective over long periods \citep{vikramdeo2024abstract,vikramdeo2023profiling}. A mathematical framework that incorporates both ecological variability and economic uncertainty therefore offers significant advantages for understanding how sustainable management decisions should be made. The HJB equation provides a structured method for identifying optimal actions by evaluating the long-term consequences of present decisions, while the Feynman-Kac representation offers a complementary viewpoint by considering the many possible future paths that a resource system may follow. Together, these approaches allow the optimization problem to be examined from different but consistent perspectives. The resulting framework provides both analytical clarity and practical insight, helping to explain not only what decisions should be made but also why those decisions remain effective under uncertain conditions \citep{pramanik2024bayes}. Such an integrated perspective is valuable because it reflects the reality that environmental systems are influenced by a wide range of factors that cannot be predicted perfectly, and therefore management strategies must be designed with flexibility and resilience in mind.
	
	The importance of this issue extends far beyond theoretical interest because the sustainable use of renewable resources has direct ecological, economic, and social consequences. Decisions regarding harvesting influence not only current economic returns but also the future condition of ecosystems that support livelihoods, biodiversity, and environmental services. In fisheries, for example, excessive harvesting can reduce stock abundance and compromise long-term productivity, while overly restrictive harvesting policies may impose unnecessary economic burdens on communities that depend on fishing activities \citep{khan2024mp60,dasgupta2023frequent,hertweck2023clinicopathological}. Similar challenges arise in forestry, wildlife management, and other renewable resource sectors, where managers must continually balance present benefits against future sustainability. Effective decision-making therefore requires an understanding of both biological growth processes and the uncertain conditions under which those processes occur. Environmental disturbances, changing climatic patterns, habitat degradation, and fluctuations in resource demand can all alter the outcomes of management decisions in ways that are difficult to anticipate using deterministic models alone. By explicitly incorporating uncertainty into the analysis, the framework developed in this study provides a more realistic representation of the conditions under which resource managers operate. This approach recognizes that sustainable management is not simply a matter of maximizing short-term gains, but rather involves identifying policies that can perform well across a broad range of possible future scenarios \citep{pramanik2024estimation,pramanik2023cont}. As a result, the study contributes to a deeper understanding of how uncertainty should be incorporated into the design of harvesting strategies intended to protect both ecological integrity and economic viability.
	
	The broader significance of this work lies in its potential to serve as a foundation for future research and practical applications in environmental management. Although the present analysis focuses on a single-resource harvesting problem, the ideas developed here can be extended to more complex ecological systems involving multiple interacting species, competing resource users, and additional environmental pressures \citep{pramanik2024estimation1,yusuf2025prognostic}. Real ecosystems are often characterized by intricate relationships among species, where changes in one population may influence the growth, survival, or productivity of others. Incorporating such interactions into stochastic harvesting models represents an important direction for future investigation \citep{pramanik2023path}. Likewise, the framework can be adapted to address larger management challenges in fisheries regulation, forest planning, wildlife conservation, and ecosystem restoration, where uncertainty plays a central role in determining long-term outcomes. The integration of optimization methods with probabilistic representations offers a flexible platform for exploring these more complicated settings while maintaining a clear connection between ecological dynamics and management objectives. Ultimately, the study contributes to the development of more robust and adaptable decision-support tools capable of guiding policy and management under uncertain conditions. As environmental and economic systems become increasingly variable, the need for approaches that can accommodate unpredictability while promoting sustainability will continue to grow \citep{pramanik2023cmbp,yusuf2025predictive}. The methods examined here represent an important step toward achieving that goal by providing a framework through which informed and resilient resource management decisions can be developed and evaluated \citep{vikramdeo2024abstract}. 
	
	\begin{figure}[htbp]
		\centering
		\includegraphics[width=\textwidth]{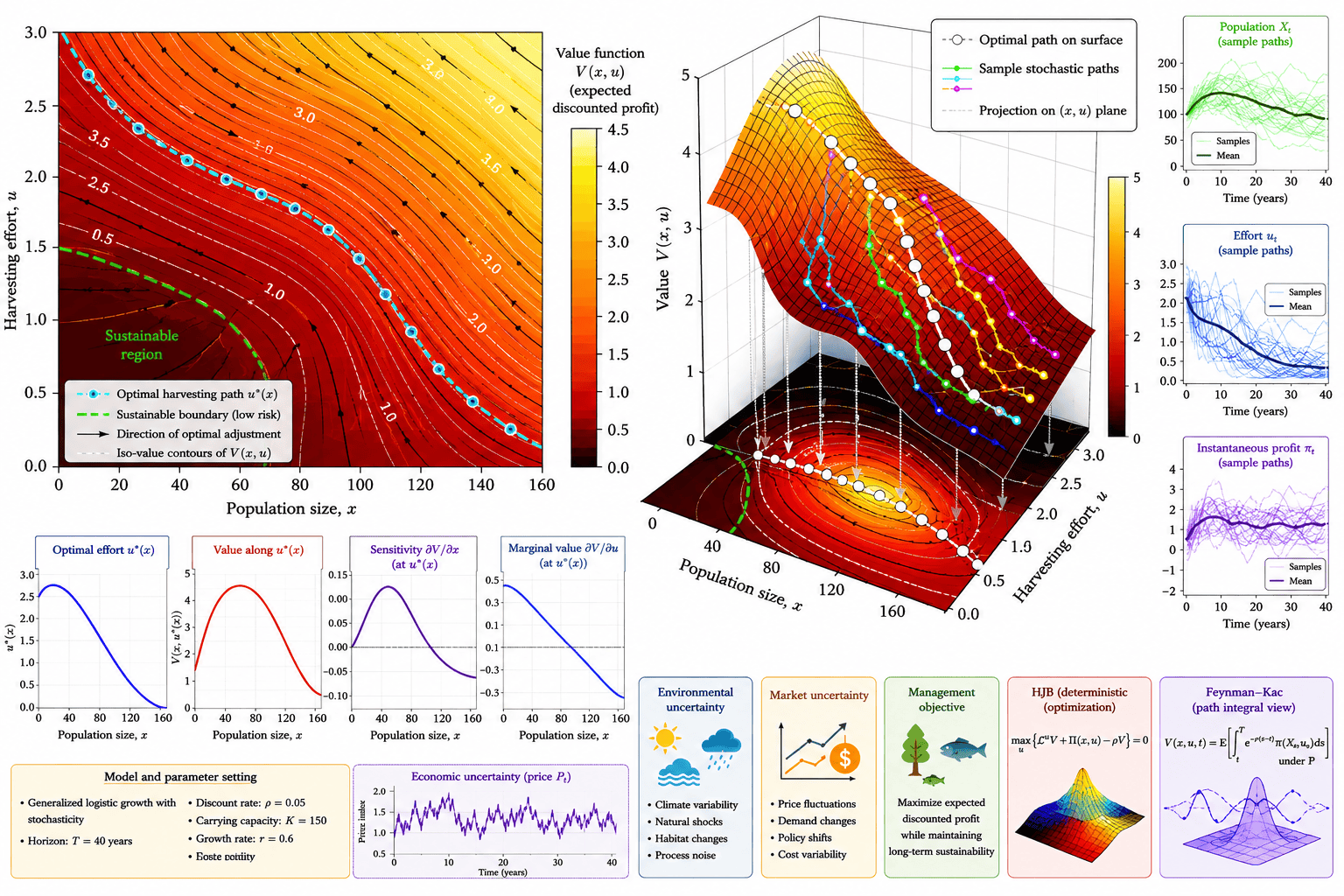}
		\caption{Optimal harvesting under stochastic ecological and economic uncertainty using heatmap and three-dimensional value function representations.}
		\label{fig:optimal_harvesting_infographic}
	\end{figure}
	
	Figure \ref{fig:optimal_harvesting_infographic} provides a comprehensive visualization of the stochastic optimal harvesting framework by combining deterministic optimization and probabilistic analysis within a single graphical representation. The left panel presents a heat map of the value function, where warmer colors indicate higher expected discounted profits under different combinations of population size and harvesting effort. Contour lines, directional arrows, and the highlighted optimal harvesting path illustrate how management decisions should adjust as resource abundance changes, while the sustainable region identifies states where long-term resource persistence is more likely. The right panel displays the corresponding three-dimensional value surface, revealing how expected economic returns vary across the state space and highlighting the optimal trajectory on the surface. Sample stochastic paths demonstrate the influence of environmental variability on population dynamics and harvesting decisions, emphasizing that future outcomes are uncertain rather than fixed. The smaller panels further summarize the behavior of optimal effort, value sensitivity, population trajectories, profit evolution, and economic uncertainty. Together, the figure illustrates how the HJB equation identifies optimal harvesting policies, while the Feynman-Kac representation interprets the value function as an average over many possible future ecological and economic pathways, providing a unified framework for sustainable resource management under uncertainty.

	\subsection{Research Background:}
	
	The problem considered in this study has attracted considerable attention across the fields of ecology, stochastic control, and environmental economics because of its direct relevance to the sustainable management of renewable natural resources. The central challenge is to identify harvesting strategies that generate stable economic benefits while preserving the long-term viability of the underlying resource population. Achieving this balance is particularly important in systems such as fisheries, forests, and wildlife populations, where excessive extraction can compromise future productivity and ecological stability \citep{pramanik2021consensus}. Early mathematical models of harvesting were largely deterministic in nature, assuming that population growth followed a predictable trajectory within an unchanging environment and that economic conditions remained relatively stable. Although these assumptions simplified analysis and provided valuable theoretical insights, they did not adequately represent the uncertainty that characterizes real-world ecological and economic systems. Natural populations are continuously affected by environmental variability, including changes in climate, habitat quality, food availability, disease outbreaks, and other factors that influence growth and survival. Likewise, market conditions can fluctuate because of shifts in demand, production costs, policy interventions, and broader economic trends \citep{pramanik2023cmbp,yusuf2025predictive}. As a result, resource managers must often make decisions without complete knowledge of future conditions. Over time, researchers recognized the limitations of deterministic frameworks and increasingly adopted stochastic approaches that explicitly account for uncertainty. This transition led to the development of models capable of representing the random nature of ecological processes and their implications for resource management decisions. Consequently, stochastic harvesting models have become an important area of research, providing more realistic tools for evaluating management strategies in uncertain environments. The present review examines the major developments within this literature, focusing on the methods that have been used to study harvesting under uncertainty, the key findings that have emerged from previous investigations, and the challenges that continue to motivate further research in stochastic control. A major advance in this field was the introduction of stochastic differential equations (SDEs) into models of population dynamics and resource management. Researchers incorporated random processes into classical growth models to capture the influence of environmental variability on biological populations. In particular, the logistic growth model, one of the most widely used representations of renewable resource dynamics, was extended to include stochastic fluctuations arising from variations in birth rates and environmental conditions \cite{brites_braumann_2023}. Within these frameworks, uncertainty is commonly represented through Brownian motion or Wiener processes, allowing the population trajectory to evolve in a random yet mathematically tractable manner. The resulting harvesting problem is naturally formulated as a stochastic control problem in which management decisions influence both current outcomes and future resource states. Among the foundational contributions in this area, the work of \cite{lungu_oksendal_2001} demonstrated how stochastic control theory could be applied to optimal harvesting problems and established the HJB equation as a powerful tool for determining optimal management policies under uncertainty. Their results showed that stochastic control methods provide a rigorous framework for balancing economic objectives with ecological sustainability when resource dynamics are influenced by random environmental fluctuations.
	
	Recent developments in stochastic harvesting research have expanded beyond purely ecological uncertainty to incorporate the effects of economic variability, recognizing that management decisions are influenced not only by changes in resource abundance but also by fluctuations in market conditions. In particular, the studies of \cite{brites_braumann_2023} and \cite{hening_tran_2020} examined optimal harvesting problems in settings where resource prices evolve randomly through time rather than remaining fixed or perfectly predictable. Their analyses demonstrated that price uncertainty can play a crucial role in determining harvesting behavior because changes in market value directly affect the trade-off between immediate extraction and future resource preservation. When prices are volatile, harvesting decisions that appear optimal under constant-price assumptions may no longer maximize long-term economic returns. As a result, incorporating stochastic price dynamics into harvesting models provides a more realistic representation of the conditions faced by resource managers and improves our understanding of how economic and ecological factors jointly influence management outcomes. These studies highlighted that optimal harvesting policies depend not only on biological growth and environmental variability but also on the timing and magnitude of market fluctuations. Consequently, they contributed significantly to the growing literature that seeks to integrate ecological processes and economic uncertainty within a unified stochastic framework. Despite these important advances, several limitations remain. A major restriction of much of the existing literature is its emphasis on single-species systems, where population dynamics are represented by a single biological stock interacting with a relatively simple environment. Although such models provide valuable theoretical insights and are often more mathematically tractable, they do not fully capture the complexity of natural ecosystems. In reality, renewable resources rarely exist in isolation. Species frequently compete for food, habitat, and other essential resources, while predator--prey relationships, mutualistic interactions, and broader ecosystem processes can substantially influence population dynamics. These ecological interdependencies introduce additional layers of complexity into the formulation of optimal harvesting problems because management decisions affecting one species may have indirect consequences for many others. The work of \cite{lungu_oksendal_2001} extended stochastic harvesting theory to systems involving multiple populations and represented an important step toward addressing this challenge. However, these models generally relied on relatively simplified assumptions regarding species interactions and often represented ecological relationships using approximately linear forms \citep{feynman_1949,bellman_1957}. While such assumptions facilitate analysis, they may not adequately describe the highly nonlinear responses commonly observed in natural ecosystems. Population growth, competition, predation, and environmental feedback mechanisms often interact in complex ways that can generate outcomes not predicted by simplified models. Consequently, many important ecological processes remain insufficiently explored within existing stochastic harvesting frameworks. This gap in the literature underscores the need for more comprehensive models capable of representing nonlinear interactions among multiple species while simultaneously accounting for environmental variability and economic uncertainty \citep{pramanik2021consensus}. Developing such models would improve the realism of stochastic resource management frameworks and provide deeper insight into how harvesting policies should be designed in complex ecological systems where biological and economic processes are tightly interconnected.
	
	The HJB equation has become one of the most widely used mathematical tools for solving stochastic optimal control problems and has played a central role in the analysis of optimal harvesting under uncertainty. Within this framework, the management problem is expressed through a value function that represents the maximum expected benefit obtainable from a given resource state when future decisions are chosen optimally \citep{pramanik2021,pramanik2022stochastic}. The HJB equation provides a dynamic description of this value function by linking current management decisions with their future consequences, thereby offering a systematic way to determine how harvesting effort should evolve through time in response to changing ecological conditions and random environmental influences. In the context of renewable resource management, the HJB equation serves as a mathematical representation of the balance between immediate economic gains from harvesting and the long-term benefits associated with maintaining a healthy resource population. This feature has made the approach particularly attractive for studying single-species systems subject to stochastic growth and harvesting pressures, and it has been applied extensively in the literature to examine how uncertainty affects resource exploitation strategies \citep{hening_tran_2020}. One of the principal strengths of the HJB framework is its ability to summarize both ecological and economic considerations within a single optimization structure. The value function captures the trade-offs between current extraction, future population growth, and expected economic returns, allowing managers to evaluate the long-term implications of harvesting decisions. By solving the HJB equation, it is possible to derive the harvesting effort that maximizes expected benefits at each point in time while accounting for uncertainty in the underlying biological system. This capability provides important insight into how optimal policies should respond to fluctuations in resource abundance and environmental conditions. Despite these advantages, the HJB framework also faces several practical and computational challenges. A major difficulty arises from the complexity of the equation itself, particularly when the model contains nonlinear relationships or multiple sources of uncertainty \citep{pramanik2023optimization001}. Realistic resource management problems often involve interactions among biological dynamics, environmental variability, and economic factors such as changing market conditions, all of which can substantially increase the complexity of the optimization problem. In such settings, obtaining analytical solutions is rarely possible, making numerical approximation methods necessary. Common approaches include finite-difference schemes and the Crank--Nicolson method, which approximate the value function over a discretized state space. Although these techniques can produce useful results, they often require substantial computational resources, especially when the number of state variables increases. This issue becomes particularly severe in high-dimensional systems involving multiple species, multiple interacting populations, or several stochastic processes operating simultaneously. As the dimension of the state space grows, the computational burden increases dramatically, making numerical implementation increasingly difficult and, in some cases, impractical \citep{imai2010identification,pramanik2025construction,pramanik2025optimal}. Consequently, obtaining accurate solutions for large-scale stochastic harvesting problems remains a significant challenge. These limitations have motivated continued research into alternative formulations, approximation techniques, and computational strategies that can preserve the theoretical strengths of the HJB framework while improving its applicability to complex ecological and environmental systems.
	
	The Feynman-Kac formula, in comparison to the HJB equation, gives us a probabilistic answer to optimal control questions. The Feynman-Kac formula demonstrates the value function as an assumption over the stochastic paths of the structure. The Feynman-Kac method gives a way to compute total anticipated profits by attaching the arbitrary shifts in a population to the particular harvesting approach used. This type of probabilistic explanation has the benefit of giving a more instinctive perception of the optimal harvesting strategy \citep{pramanik2024stochastic,pramanik2025stubbornness}. The Feynman--Kac representation has become an important tool for examining how environmental uncertainty influences optimal harvesting decisions and for providing a probabilistic interpretation of resource management problems. Rather than focusing directly on a governing partial differential equation, this approach links the value function to the expected outcome of a stochastic process, thereby connecting future economic returns to the many possible paths that a biological system may follow through time. In the context of renewable resource management, this perspective offers a natural way to evaluate the effects of environmental variability on harvesting strategies because it explicitly incorporates uncertainty into the calculation of future benefits. The work of \cite{hening_tran_2020} demonstrated the usefulness of the Feynman-Kac representation in modeling harvesting and seeding decisions under stochastic conditions, showing that the value function can be interpreted as the expected stream of future profits generated by a particular management strategy. This interpretation provides an intuitive understanding of decision-making under uncertainty by viewing management outcomes as averages over many possible ecological futures rather than as the result of a single predicted trajectory. One of the principal advantages of the Feynman--Kac representation is that it transforms a stochastic control problem into an expectation involving a stochastic process, which can often simplify both interpretation and computation. By connecting optimization problems with probabilistic descriptions of system behavior, the method offers valuable insight into the relationship between ecological variability and economic performance. Nevertheless, the approach is not without limitations \citep{pramanik2025dissecting,pramanik2025factors}. A common restriction is that many applications of the Feynman-Kac representation rely on relatively simple relationships between control variables and the underlying stochastic processes. While such assumptions can facilitate analysis, they may not adequately represent more complex environmental and economic systems in which interactions are strongly nonlinear. Real-world resource management problems often involve feedback mechanisms, thresholds, and other nonlinear effects that cannot be captured easily within simplified formulations. In addition, many implementations of the Feynman--Kac representation assume a degree of independence among stochastic processes that may not be realistic in practice \citep{pramanik2021thesis,pramanik2016}. Environmental conditions, biological responses, and economic factors are frequently interconnected, and changes in one component can influence the behavior of others through intricate feedback relationships. As a result, assumptions of weak dependence or independence may limit the applicability of some models when ecological and economic systems are tightly coupled. Beyond these methodological challenges, an important limitation shared by much of the existing literature is the strong emphasis on single-species harvesting models. Many stochastic harvesting studies assume the presence of a single resource population and therefore neglect the broader ecological interactions that characterize natural systems \citep{hua2019assessing,polansky2021motif}. In reality, ecosystems are commonly composed of multiple species linked through competition, predation, mutual dependence, and shared resource use. These interactions can substantially influence population dynamics and, consequently, the effectiveness of harvesting strategies. Extending existing stochastic harvesting frameworks to include multiple interacting species would represent a significant advancement in the field because it would allow researchers to examine management decisions within a more realistic ecological setting. Such developments could improve our understanding of how harvesting policies affect interconnected populations and provide deeper insight into sustainable resource management in ecosystems where ecological relationships play a critical role in determining long-term outcomes.
	
	An important direction for future research involves expanding stochastic harvesting models to incorporate a broader range of economic, computational, and social considerations that influence real-world resource management decisions. Although several studies have recognized the importance of price uncertainty and have incorporated stochastic price dynamics into harvesting frameworks, many of these models continue to focus primarily on market volatility while giving limited attention to the institutional and policy environments within which harvesting activities take place \citep{pramanik2024estimation,pramanik2023cont}. In practice, resource users operate under a variety of regulations, market interventions, and government programs that can substantially affect harvesting behavior and long-term resource outcomes. Examples include harvest quotas, licensing requirements, seasonal closures, trade restrictions, taxation policies, subsidy programs, and other administrative measures designed to balance economic development with environmental protection. These policy instruments often influence decision-making as strongly as ecological conditions or market prices, yet they remain underrepresented in many mathematical models. Incorporating such factors into stochastic harvesting frameworks would lead to a more realistic description of resource management systems by accounting not only for uncertainty in biological growth and market conditions but also for the policy mechanisms that shape human behavior \citep{pramanik2025strategic,pramanik2025impact}. This extension would be particularly valuable in sectors such as fisheries and forestry, where regulatory decisions frequently determine access to resources, influence harvesting intensity, and affect the economic viability of harvesting operations. Models that integrate ecological uncertainty, market fluctuations, and policy interventions within a unified framework could therefore provide more useful guidance for managers and policymakers seeking to design sustainable and effective resource management strategies. In addition to these economic and institutional considerations, significant opportunities remain for advancing the computational methods used to solve stochastic harvesting problems. Both the HJB equation and the Feynman--Kac representation can become computationally demanding when applied to realistic systems involving multiple state variables, nonlinear interactions, or several sources of uncertainty \citep{pramanik2025strategies,pramanik2023optimization001}. Numerical implementation becomes increasingly challenging as the dimension of the problem grows, and traditional methods may require substantial computational resources while still facing difficulties in maintaining accuracy \citep{carey2006race}. Consequently, there is considerable scope for developing more efficient numerical algorithms and approximation techniques capable of handling large-scale stochastic control problems without sacrificing reliability. Emerging computational approaches, including machine learning methods, data-driven optimization techniques, and approximate dynamic programming, offer promising opportunities for overcoming some of these challenges. Such methods may provide new ways to approximate value functions, identify near-optimal policies, and analyze complex systems that are difficult to address using conventional numerical procedures. Improvements in computational efficiency would make it possible to investigate richer and more realistic ecological models while preserving the analytical rigor needed for effective decision support \citep{pramanik2024measuring,pramanik2024dependence}. Beyond economic and computational issues, another area that remains insufficiently explored is the incorporation of social impacts and policy constraints into stochastic harvesting models. Most existing studies focus primarily on maximizing economic returns or maintaining biological sustainability, often overlooking the broader social consequences of management decisions. In reality, harvesting policies can have significant effects on employment, community stability, income distribution, and regional development. For example, restrictive harvesting measures designed to protect resource populations may reduce short-term employment opportunities or disrupt communities that depend heavily on natural resource industries. Conversely, excessive harvesting may generate immediate economic gains while creating long-term social and environmental costs. Likewise, practical resource management is often constrained by political realities, legal requirements, and institutional objectives that cannot be captured fully by purely economic optimization criteria. Policy instruments such as quota systems, harvest limits, protected areas, and conservation mandates frequently impose restrictions on harvesting decisions that must be considered when evaluating management strategies. Incorporating these social and policy dimensions into future stochastic harvesting models would support the development of management frameworks that are not only economically efficient and ecologically sustainable but also socially acceptable and politically feasible. Such an integrated perspective would provide a more comprehensive foundation for decision-making and would better reflect the complex set of objectives that characterize modern environmental management \citep{pramanik2024parametric,pramanik2025dissecting}.

	\subsection{Main Contributions:}
	This study contributes to the growing body of research on optimal harvesting under uncertainty by bringing together the HJB equation and the nonlinear Feynman--Kac representation within a unified stochastic control framework and examining their relationship in a common ecological setting. Although both approaches have been widely used in the analysis of stochastic optimization problems, they are often presented independently, with limited attention given to their direct comparison within the same harvesting model. The present work addresses this gap by demonstrating how these two methodologies provide complementary perspectives on the same underlying problem. The HJB equation offers a dynamic optimization framework in which the optimal harvesting policy is obtained through a partial differential equation that characterizes the value function and describes how management decisions should evolve in response to changes in the resource state. In contrast, the nonlinear Feynman--Kac representation provides a probabilistic viewpoint by expressing the same value function in terms of the expected behavior of stochastic system trajectories. By examining these approaches side by side, the study highlights the conceptual connection between deterministic optimization and probabilistic interpretation, showing that both methods ultimately describe the same optimal harvesting strategy despite originating from different mathematical viewpoints. A distinguishing feature of the analysis is its emphasis on the ecological dimensions of the harvesting problem. To maintain a clear focus on biological dynamics and environmental variability, the model intentionally excludes stochastic price processes and concentrates instead on the effects of population movement, harvesting effort, and random environmental fluctuations. This simplification allows the analysis to isolate the role of ecological uncertainty and provides a clearer understanding of how resource dynamics influence optimal management decisions. In addition to developing the stochastic control framework, the study establishes the mathematical validity of the controlled model by presenting existence and uniqueness results for the underlying stochastic differential equation. These results ensure that the state process is well defined and that the model possesses a unique solution under appropriate conditions, thereby providing a rigorous foundation for the subsequent optimization analysis. The combination of theoretical development and empirical implementation strengthens the contribution of the work by linking mathematical analysis with practical resource-management applications. Furthermore, the study serves as a foundation for future investigations aimed at incorporating additional sources of complexity that are common in real-world resource systems. Potential extensions include the introduction of stochastic price dynamics, interactions among multiple species, market regulations, harvesting restrictions, and broader policy constraints that influence management decisions. By establishing a coherent framework that connects the HJB equation and the nonlinear Feynman--Kac representation while focusing on ecological uncertainty, this work provides a useful starting point for the development of more comprehensive stochastic harvesting models capable of addressing the increasingly complex challenges associated with sustainable resource management.
	
	\section{Theoretical Background:}
	
	This section employs stochastic control theory as a framework for investigating optimal harvesting decisions in renewable resource systems that are influenced by environmental uncertainty and random fluctuations. The central objective is to determine how harvesting effort should be adjusted through time in order to balance economic returns with the long-term sustainability of the resource population when future conditions cannot be predicted with certainty \citep{pramanik2025optimal1,powell2025genomic}. To achieve this goal, the analysis combines three closely related components: a controlled stochastic differential equation, the HJB equation, and the nonlinear Feynman--Kac representation. The controlled stochastic differential equation serves as the foundation of the model by describing how the resource state evolves over time under the combined influence of natural growth, harvesting activity, and environmental variability. In this formulation, harvesting effort enters directly into the dynamics of the resource population, allowing management decisions to influence future resource abundance while environmental disturbances introduce random deviations from the average growth pattern. As a result, the state process reflects the reality that renewable resources such as fisheries, forests, and wildlife populations are shaped simultaneously by human intervention and unpredictable ecological conditions. Building upon this dynamic representation, the HJB equation is used to characterize the optimal harvesting policy \citep{pramanik2026strategic}. The HJB framework links present decisions with future consequences through a value function that measures the maximum expected benefit attainable from any given resource state. By solving this optimization problem, it becomes possible to identify harvesting strategies that account not only for immediate economic gains but also for their impact on future resource availability and expected returns. The HJB equation therefore provides a systematic description of how optimal decisions should adapt to changing ecological conditions and uncertainty over time. Complementing this optimization perspective, the nonlinear Feynman--Kac representation offers a probabilistic interpretation of the same value function \citep{pramanik2026quantum}. Rather than focusing solely on the governing equation satisfied by the value function, this approach views the problem through the collection of possible future trajectories that the resource system may follow. In this representation, the value function can be interpreted as an expected quantity obtained by averaging over many potential stochastic paths, each reflecting a different realization of environmental variability and management outcomes \citep{pramanik2024analysis,dunbar2026modeling}. This probabilistic viewpoint provides additional insight into the role of uncertainty by illustrating how future ecological conditions contribute to the overall value of a harvesting strategy. Although the HJB equation and the nonlinear Feynman--Kac representation originate from different mathematical perspectives, they ultimately describe the same optimization problem and yield consistent characterizations of the value function and optimal harvesting policy. Their integration within a single stochastic control framework therefore provides both a dynamic optimization perspective and a probabilistic interpretation of resource management under uncertainty \citep{dasgupta2026frequent}. By combining these complementary approaches, the study establishes a comprehensive methodology for analyzing renewable resource harvesting in stochastic environments, offering a clearer understanding of how ecological variability, management decisions, and long-term economic objectives interact within a unified mathematical structure.
	
	\subsection{Stochastic Harvesting Model:}
	Let $(\Omega,\mathcal{F},\{\mathcal{F}_t\}_{t\geq0},\mathbb{P})$ be a filtered probability space satisfying the usual conditions, where $\{\mathcal{F}_t\}_{t\geq0}$ represents the information available up to time $t$ and supports a standard one-dimensional Brownian motion $\{W(t)\}_{t\geq0}$. Within this stochastic framework, let $X(t)$ denote the state of a renewable biological resource at time $t$, interpreted either as the underlying stock level or, in the empirical application, as a normalized catch index. The evolution of the resource is assumed to be governed by both deterministic biological mechanisms and random environmental influences, reflecting the reality that ecological systems are continuously exposed to uncertain external conditions. To capture these features, the resource dynamics are modeled through a controlled stochastic differential equation in which the drift component describes the average rate of change of the population and the diffusion component accounts for random fluctuations around that average behavior. Specifically, the deterministic part of the model consists of the natural population growth term $f(X(t))X(t)$ together with the harvesting term $qE(t)X(t)$, where $f(X(t))$ denotes the per-capita growth rate of the resource, $q$ is the catchability coefficient measuring the effectiveness of harvesting effort, and $E(t)$ is an admissible harvesting control process adapted to the filtration $\{\mathcal{F}_t\}_{t\geq0}$. The quantity $f(X(t))X(t)-qE(t)X(t)$ therefore represents the net biological change after accounting for resource extraction. The stochastic component of the model is represented by the multiplicative diffusion term $\sigma X(t)dW(t)$, where $\sigma>0$ is a volatility parameter that quantifies the magnitude of environmental variability. The multiplicative structure implies that uncertainty scales proportionally with the current resource level, a property that is commonly observed in ecological systems where larger populations tend to experience larger absolute fluctuations. Under these assumptions, the controlled resource dynamics are described by
	
	\begin{equation}
		dX(t)=
		\left[
		f(X(t))X(t)-qE(t)X(t)
		\right]dt
		+
		\sigma X(t)dW(t).
		\label{eq:sde_model}
	\end{equation}
	
	This formulation provides a flexible stochastic representation capable of incorporating a wide range of biological growth functions while maintaining an explicit connection between ecological dynamics and harvesting decisions. For the empirical component of the study, the general model is specialized to a country-specific fisheries framework in which the state variable is constructed from normalized annual production data. Let $X_i(t)$ denote the normalized fisheries state variable for country $i$, and let $\mu_i$ and $\sigma_i$ denote the empirically estimated drift and volatility parameters obtained from the observed data. The harvesting effort is represented by $E_i(t)$, while $q_h$ denotes the effective catchability coefficient associated with the normalized state process. Replacing the general growth specification by an empirically estimated linear drift yields a controlled stochastic system whose evolution is governed by
	
	\begin{equation}
		dX_i(t)=
		\left[
		\mu_iX_i(t)-q_hE_i(t)X_i(t)
		\right]dt
		+
		\sigma_iX_i(t)dB_t.
		\label{eq:controlled_sde}
	\end{equation}
	
	Here, $\{B_t\}_{t\geq0}$ denotes a standard Brownian motion defined on the same filtered probability space. The term $\mu_iX_i(t)$ captures the average empirical tendency of the normalized fisheries state to increase or decrease over time, whereas the term $q_hE_i(t)X_i(t)$ represents the reduction in the resource attributable to harvesting activity. The stochastic component $\sigma_iX_i(t)dB_t$ models random environmental and production-related disturbances that cannot be explained by the deterministic trend alone. An important feature of this formulation is that the control process $E_i(t)$ enters directly into the drift coefficient, implying that management actions influence the future trajectory of the state variable in a continuous and dynamic manner \citep{pramanik2026bayesian}. Consequently, the resulting process is not merely a passive description of observed resource fluctuations but rather a controlled stochastic system whose evolution depends explicitly on harvesting decisions. This structure provides the mathematical foundation for the subsequent stochastic optimization problem, where the objective is to determine harvesting policies that balance economic returns against the long-term persistence of the resource under environmental uncertainty \citep{powell2026role}.
	
	For each country $i$, the empirical state process is constructed from annual capture data by converting the observed catch series into a dimensionless normalized index. This normalization is used because the selected countries differ substantially in production scale, and a common state space is needed before the data can be linked to the controlled stochastic model. Let $\text{Catch}_i(t)$ denote the observed annual catch for country $i$ at time $t$. The normalized fisheries state variable is defined by
	\begin{equation}
		X_i(t):=
		\frac{\text{Catch}_i(t)}
		{\max(\text{Catch}_i)}.
		\label{eq:normalization}
	\end{equation}
	Since each observation is divided by the maximum observed catch for the same country, the resulting process satisfies $0\leq X_i(t)\leq 1$ for all observed years. This bounded representation places each national catch series on a comparable scale and allows $X_i(t)$ to be interpreted as the relative position of the fishery state compared with its historical maximum \citep{ellington2025playmydata,ellington2025metascorelens}. The one-year increment of the normalized state is defined by $\Delta X_i(t)=X_i(t+1)-X_i(t)$, which measures the observed annual change in the empirical state variable. To estimate the deterministic tendency and random variation in the data, the yearly increment is regressed on the current normalized state according to
	$\Delta X_i(t)=\mu_iX_i(t)+\varepsilon_i(t).$
	In this specification, $\mu_i$ represents the estimated empirical drift for country $i$, while $\varepsilon_i(t)$ denotes the residual component, interpreted as the unexplained yearly fluctuation after accounting for the state-dependent trend. The volatility parameter is then obtained as $\sigma_i=\operatorname{SD}(\varepsilon_i(t))$, so that the diffusion coefficient in the stochastic model is calibrated directly from the variability remaining in the annual data. This construction provides a statistical bridge between the observed fisheries record and the controlled stochastic state equation used in the subsequent optimization analysis. In particular, the normalized process $X_i(t)$ supplies the empirical state, $\mu_i$ gives the average state-dependent movement, and $\sigma_i$ measures the magnitude of random environmental and production-related variation. These quantities form the data-based inputs for the controlled stochastic differential equation and allow the HJB and nonlinear Feynman--Kac representation to be applied within a common empirical framework \citep{valdez2025association,valdez2025exploring}. Thus, the observed annual catch data are not used only descriptively; they are transformed into the state, drift, and volatility components required for a stochastic control formulation of optimal harvesting under uncertainty.
	
	\subsection{Euler-Maruyama Scheme:}
	
	The controlled state equation in Equation~\eqref{eq:controlled_sde} is approximated numerically by applying the Euler--Maruyama scheme, which provides a time-discrete representation of the continuous stochastic dynamics over a grid $0=t_0<t_1<\cdots<t_n=T$ with uniform step size $\Delta t=t_{k+1}-t_k$. For each country $i$, the simulated state is updated from its current value $X_i(t)$ by adding the deterministic contribution from empirical drift, subtracting the deterministic loss caused by harvesting effort, and then adding a stochastic innovation that represents unresolved environmental and production variability. The resulting approximation is given by
	\begin{equation}
		X_i(t+\Delta t)
		=
		X_i(t)
		+
		\left[
		\mu_iX_i(t)-q_hE_i(t)X_i(t)
		\right]\Delta t
		+
		\sigma_iX_i(t)\sqrt{\Delta t}Z_t,
		\qquad
		Z_t\overset{iid}{\sim} N(0,1).
		\label{eq:euler_controlled}
	\end{equation}
	In this expression, the term $\mu_iX_i(t)\Delta t$ represents the empirical average movement of the normalized fisheries state during one time step, while $q_hE_i(t)X_i(t)\Delta t$ measures the reduction in the state caused by harvesting pressure under the chosen control. The random component $\sigma_iX_i(t)\sqrt{\Delta t}Z_t$ approximates the Brownian increment over the interval $[t,t+\Delta t]$, since $B(t+\Delta t)-B(t)$ has distribution $N(0,\Delta t)$ and can therefore be written as $\sqrt{\Delta t}Z_t$ with $Z_t$ standard normal. This construction preserves the multiplicative form of the diffusion coefficient, so that the magnitude of random fluctuations depends on the current level of the resource state. From a modeling perspective, this is important because larger normalized stock or catch-index values can generate larger absolute deviations, while smaller states experience proportionally smaller stochastic changes \citep{khan2023myb}. The scheme is explicit, meaning that all terms on the right-hand side are evaluated at the current time $t$, which makes the method straightforward to implement for repeated simulations of controlled resource paths. The harvesting effort $E_i(t)$ is included directly in the drift term, so each simulated trajectory reflects not only the empirical tendency of the fishery and the random shocks affecting it, but also the management action imposed at that time. Consequently, the Euler--Maruyama approximation converts the continuous controlled stochastic differential equation into a practical computational procedure for studying how the normalized resource state evolves under uncertainty \citep{gaudet2011risk,anderson2026obesity}. It also provides the numerical foundation for comparing harvesting policies, evaluating state-dependent effort, and linking the simulated paths to the HJB and nonlinear Feynman--Kac representation used later in the analysis. In this way, the simulation reflects both the observed empirical data pattern through $\mu_i$ and $\sigma_i$ and the effect of harvesting control through $q_hE_i(t)X_i(t)$, allowing the stochastic harvesting model to be examined in a finite-sample, data-driven setting.
	
	To justify the numerical approximation rigorously, we impose the standard assumptions that the controlled drift coefficient $b_i(x,e)=(\mu_i-q_h e)x$ and diffusion coefficient $a_i(x)=\sigma_i x$ satisfy global Lipschitz continuity and linear growth conditions on $\mathbb{R}_+$, and that the admissible control process $\{E_i(t)\}_{t\in[0,T]}$ is progressively measurable and uniformly bounded by a deterministic constant $M>0$. Under these assumptions, the controlled stochastic differential equation admits a unique strong solution with finite moments of all orders on every finite time horizon. Moreover, the Euler--Maruyama approximation in Equation~\eqref{eq:euler_controlled} can be interpreted as a discrete-time Markov chain whose one-step conditional expectation satisfies
	$
	\mathbb{E}[X_i(t+\Delta t)\mid\mathcal{F}_t]
	=
	X_i(t)+(\mu_i-q_hE_i(t))X_i(t)\Delta t,
	$
	while the conditional variance satisfies
	$
	\operatorname{Var}(X_i(t+\Delta t)\mid\mathcal{F}_t)
	=
	\sigma_i^2X_i(t)^2\Delta t.
	$
	Consequently, the first two conditional moments of the discrete process coincide with those of the continuous-time model up to terms of order $o(\Delta t)$, establishing local consistency of the approximation. This observation leads to the following result.
	
	\begin{prop}
		Suppose $E_i(t)$ is bounded and adapted, and let $X_i^{\Delta t}(t)$ denote the Euler--Maruyama approximation defined by Equation~\eqref{eq:euler_controlled}. Then, for every finite horizon $T>0$, there exists a constant $C_T>0$ independent of $\Delta t$ such that
		$
		\sup_{0\leq t\leq T}
		\mathbb{E}
		\left[
		|X_i(t)-X_i^{\Delta t}(t)|^2
		\right]
		\leq
		C_T\Delta t.
		$
	\end{prop}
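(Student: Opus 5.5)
We follow the classical strong-convergence argument for the Euler--Maruyama scheme with Lipschitz coefficients; for the mean-square error this gives order one in $\Delta t$, i.e.\ strong order $1/2$. We write the scheme in continuous-time interpolated form. Set $\eta(s)=t_k$ for $s\in[t_k,t_{k+1})$ and define
\begin{equation*}
X_i^{\Delta t}(t)=X_i(0)+\int_0^t b_i\bigl(X_i^{\Delta t}(\eta(s)),E_i(\eta(s))\bigr)\,ds+\int_0^t a_i\bigl(X_i^{\Delta t}(\eta(s))\bigr)\,dB_s ,
\end{equation*}
which agrees with Equation~\eqref{eq:euler_controlled} at grid points. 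Here $b_i(x,e)=(\mu_i-q_he)x$ and $a_i(x)=\sigma_i x$. Because $|E_i|\le M$, we have $|b_i(x,e)-b_i(y,e)|\le L|x-y|$ with $L=|\mu_i|+q_hM$, uniformly in $e$, and $|a_i(x)-a_i(y)|\le\sigma_i|x-y|$.

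\textbf{Moment bounds.} From the standing assumptions, $\sup_{t\le T}\mathbb{E}|X_i(t)|^2<\infty$. For the scheme, a discrete Gronwall argument gives the same kind of bound:
\begin{equation*}
\mathbb{E}|X_{k+1}|^2\le(1+C\Delta t)\,\mathbb{E}|X_k|^2 ,
\end{equation*}
because the conditional mean and variance computed before the proposition are $X_k+O(\Delta t)X_k$ and $\sigma_i^2X_k^2\Delta t$. It follows that $\sup_k\mathbb{E}|X_k|^2\le K_T$ with $K_T$ independent of $\Delta t$.

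\textbf{Error decomposition.} Let $e(t)=X_i(t)-X_i^{\Delta t}(t)$. Subtracting the two integral equations, we split each integrand into three pieces:
\begin{itemize}
\item[(i)] $b_i(X_i(s),E_i(s))-b_i(X_i(\eta(s)),E_i(s))$,
\item[(ii)] $b_i(X_i(\eta(s)),E_i(s))-b_i(X_i^{\Delta t}(\eta(s)),E_i(s))$,
\item[(iii)] a control-mismatch term $b_i(X_i^{\Delta t}(\eta(s)),E_i(s))-b_i(X_i^{\Delta t}(\eta(s)),E_i(\eta(s)))=-q_h\bigl(E_i(s)-E_i(\eta(s))\bigr)X_i^{\Delta t}(\eta(s))$.
\end{itemize}
The diffusion integrand splits the same way, without the control term. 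We then estimate $\sup_{u\le t}\mathbb{E}|e(u)|^2$ by $(a+b+c)^2\le3(a^2+b^2+c^2)$, Cauchy--Schwarz in time for the $ds$ integrals, and the It\^o isometry for the stochastic integrals.

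Terms of type (ii) are bounded by $C\int_0^t\sup_{r\le s}\mathbb{E}|e(r)|^2ds$. Terms of type (i) require the regularity estimate
\begin{equation*}
\mathbb{E}|X_i(s)-X_i(\eta(s))|^2\le C\,\Delta t ,
\end{equation*}
which follows from the SDE, the It\^o isometry and the moment bound. Together these contribute $C_T\Delta t$. Gronwall's inequality then yields $\sup_{t\le T}\mathbb{E}|e(t)|^2\le C_T\Delta t$, provided (iii) is also $O(\Delta t)$.

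\textbf{Main obstacle: the control term.} Term (iii) is the step where the stated hypotheses do not suffice. For a merely bounded adapted control, $\int_0^T\mathbb{E}|E_i(s)-E_i(\eta(s))|^2ds$ need not be $O(\Delta t)$. So we would state explicitly how the scheme samples the control. The first option is to read the scheme as using the exact time average of $E_i$ over each step, or to take $E_i$ piecewise constant on the grid, as in the simulations; then (iii) vanishes identically. Otherwise we would add the mild regularity assumption
\begin{equation*}
\mathbb{E}|E_i(s)-E_i(\eta(s))|^2\le C\Delta t .
\end{equation*}
This holds, for example, for feedback controls $E_i(t)=e(t,X_i(t))$ with $e$ Lipschitz in $x$ and $1/2$-H\"older in $t$, which covers the HJB feedback policy later in the paper. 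In that case (iii) contributes at most
\begin{equation*}
q_h^2\,\sup_k\mathbb{E}|X_k|^4\int_0^T\mathbb{E}|E_i(s)-E_i(\eta(s))|^2ds .
\end{equation*}
The fourth moment of the scheme is bounded by the same argument as the second-moment bound, and when $E_i$ is a Lipschitz feedback a Cauchy--Schwarz or Lipschitz bound brings this term back inside the Gronwall estimate. With this clarification, the constant $C_T$ depends only on $T$, $\mu_i$, $\sigma_i$, $q_h$, $M$ and $\mathbb{E}|X_i(0)|^2$, and not on $\Delta t$.
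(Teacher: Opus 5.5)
You are following the paper's route. The paper's proof is a two-line appeal to the standard Euler--Maruyama theorem: It\^o's isometry and the Lipschitz bounds are said to give $\mathbb{E}|e(t)|^2\le C_1\int_0^t\mathbb{E}|e(s)|^2ds+C_2\Delta t$, and Gronwall finishes. Your split into (i)--(iii) makes explicit what that $C_2\Delta t$ hides. Your diagnosis of (iii) is correct, and it is a defect of the paper's argument, not of yours. The standard theorem requires the coefficients to be $\tfrac12$-H\"older in time, which for $b_i(x,E_i(t))$ is a regularity condition on $E_i$ that boundedness and adaptedness do not supply. In fact the proposition is false as stated. Take the deterministic, hence adapted, control $E_i(t)=M\,\mathbf{1}\{t\notin\mathbb{Q}\}$ with $q_hM>0$, $X_i(0)=x_0>0$, $T$ rational and $\Delta t=T/n$. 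The exact solution sees $E_i=M$ for a.e.\ $t$, so $\mathbb{E}X_i(T)=x_0e^{(\mu_i-q_hM)T}$. Every grid point is rational, so the scheme is the uncontrolled one and $\mathbb{E}X_i^{\Delta t}(T)=x_0(1+\mu_i\Delta t)^n\to x_0e^{\mu_iT}$. Since $\mathbb{E}|X_i(T)-X_i^{\Delta t}(T)|^2\ge|\mathbb{E}X_i(T)-\mathbb{E}X_i^{\Delta t}(T)|^2$, the error does not even tend to zero. An extra hypothesis is therefore unavoidable. Your first repair (control piecewise constant on the grid, as in the simulations, so that (iii) vanishes) gives a complete proof of the corrected statement.

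Your second repair does not close as written. The displayed bound $q_h^2\sup_k\mathbb{E}|X_k|^4\int_0^T\mathbb{E}|E_i(s)-E_i(\eta(s))|^2ds$ is not a valid inequality, because $\mathbb{E}\bigl[|E_i(s)-E_i(\eta(s))|^2X_k^2\bigr]$ is not dominated by $\mathbb{E}X_k^4\cdot\mathbb{E}|E_i(s)-E_i(\eta(s))|^2$ in general. Cauchy--Schwarz gives $(\mathbb{E}X_k^4)^{1/2}(\mathbb{E}|E_i(s)-E_i(\eta(s))|^4)^{1/2}$. From $|E_i|\le M$ and your hypothesis $\mathbb{E}|E_i(s)-E_i(\eta(s))|^2\le C\Delta t$, the second factor is only $O(\Delta t^{1/2})$, so this route gives mean-square error $O(\Delta t^{1/2})$, not $O(\Delta t)$. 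To get $O(\Delta t)$, use one of two stronger hypotheses:
\begin{itemize}
\item the conditional bound $\mathbb{E}\bigl[|E_i(s)-E_i(\eta(s))|^2\mid\mathcal{F}_{\eta(s)}\bigr]\le C\Delta t$, then condition on $\mathcal{F}_{\eta(s)}$, with respect to which $X_k$ is measurable;
\item the fourth-moment bound $\mathbb{E}|E_i(s)-E_i(\eta(s))|^4\le C\Delta t^2$.
\end{itemize}
The fourth-moment bound does hold for $E_i(s)=e(s,X_i(s))$ with $e$ Lipschitz in $x$ and $\tfrac12$-H\"older in $t$, since the Burkholder--Davis--Gundy inequality gives $\mathbb{E}|X_i(s)-X_i(\eta(s))|^4\le C\Delta t^2$. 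Term (iii) is then $O(\Delta t)$ directly, rather than being absorbed into the Gronwall term. If instead the scheme evaluates the feedback at its own state, $e(t_k,X_k)$, as in the paper's simulations, you are discretizing the closed-loop equation with drift $(\mu_i-q_he(t,x))x$. Its Lipschitz constant in $x$ grows like $|x|$, so the global-Lipschitz Gronwall argument needs a localization step. The claim that your hypothesis ``covers the HJB feedback policy'' therefore needs more than you wrote.
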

	
	\begin{proof}
		Since both the drift and diffusion coefficients are globally Lipschitz and satisfy linear growth conditions, the standard strong convergence theorem for Euler--Maruyama schemes applies. Specifically, writing the error process as $e(t)=X_i(t)-X_i^{\Delta t}(t)$ and using It\^o's isometry together with the Lipschitz bounds on $b_i$ and $a_i$, one obtains
		$
		\mathbb{E}|e(t)|^2
		\leq
		C_1
		\int_0^t
		\mathbb{E}|e(s)|^2\,ds
		+
		C_2\Delta t,
		$
		for suitable positive constants $C_1$ and $C_2$. An application of Gronwall's inequality yields
		$
		\sup_{0\leq t\leq T}
		\mathbb{E}|e(t)|^2
		\leq
		C_T\Delta t,
		$
		which establishes strong convergence of order $1/2$ in the root-mean-square sense. \end{proof}
	
	An additional property of practical importance is the preservation of positivity. Since the continuous model is a controlled geometric diffusion, its exact solution remains nonnegative whenever the initial condition is nonnegative. Although the explicit Euler--Maruyama scheme does not guarantee strict positivity for arbitrary step sizes, positivity is preserved with high probability whenever $\Delta t$ is sufficiently small and the volatility coefficient is moderate relative to the deterministic drift. More precisely, if $X_i(0)>0$ and the step size satisfies a suitable stability restriction, then
	$
	\mathbb{P}\!\left(
	X_i^{\Delta t}(t_k)>0
	\ \text{for all}\ k
	\right)\rightarrow 1
	$
	as $\Delta t\rightarrow0$. Furthermore, the discrete generator associated with the Euler approximation converges to the infinitesimal generator of the controlled diffusion,
	$
	\mathcal{L}^E\phi(x)
	=
	(\mu_i-q_hE)x\phi'(x)
	+
	\frac12\sigma_i^2x^2\phi''(x),
	$
	for every test function $\phi\in C_b^2(\mathbb{R}_+)$. This weak convergence result implies that path-dependent functionals, expected rewards, and value-function approximations computed from the simulated trajectories converge to their continuous-time counterparts. Therefore, the Euler--Maruyama scheme is not merely a computational device but a mathematically consistent approximation of the controlled harvesting diffusion, providing a rigorous bridge between the continuous stochastic control problem and its numerical implementation within the HJB and nonlinear Feynman-Kac representation framework.
	
	\subsection{Objective and Value Functions:}
	
	The economic component of the stochastic control problem is formulated through an instantaneous profit functional that quantifies the net return generated by harvesting activity at each time point. Let $E(t)$ denote an admissible harvesting effort process belonging to the class $\mathcal{A}$ of progressively measurable controls adapted to the filtration $\{\mathcal{F}_t\}_{t\geq 0}$ and satisfying the integrability condition $\mathbb{E}\left[\int_0^T E(t)^2dt\right]<\infty$. The instantaneous reward associated with a harvesting decision is given by
	\begin{equation}
		\Pi(t)=p\cdot q\cdot E(t)X(t)-c_1E(t)-c_2E^2(t),
		\label{eq:profit}
	\end{equation}
	where $p>0$ denotes the unit market value of the harvested resource, $qE(t)X(t)$ represents the harvest rate generated by the interaction between effort and resource abundance, $c_1>0$ is a proportional operating cost coefficient, and $c_2>0$ is a convex effort-penalty parameter. The inclusion of the quadratic cost term is mathematically significant because it introduces strict concavity into the reward structure. In the absence of this term, the optimization problem may fail to admit an interior maximizer, potentially leading to degenerate controls characterized by unbounded harvesting effort. Indeed, for fixed resource state $X$, the mapping $E\mapsto pqEX-c_1E-c_2E^2$ satisfies $\frac{\partial^2 \Pi}{\partial E^2}=-2c_2<0$, implying strict concavity with respect to the control variable. Consequently, the running reward admits a unique maximizer whenever the admissible control set is convex. Moreover, the profit function exhibits at most quadratic growth in $E$ and linear growth in $X$, ensuring that the associated discounted reward process remains integrable under standard moment assumptions on the controlled diffusion. From a stochastic control perspective, the function $\Pi$ may be viewed as the running payoff density of the controlled Markov process $\{X(t)\}_{t\geq0}$, thereby establishing the connection between ecological state dynamics and economic performance. The reward structure therefore encodes the fundamental trade-off between extraction benefits and harvesting costs that drives the optimal management problem.
	
	The objective of the decision-maker is to maximize the expected discounted cumulative profit generated over a finite planning horizon $[t,T]$. To formalize this objective, define the performance functional associated with an admissible control $E\in\mathcal{A}$ by
	\begin{equation}
		J^*(t,X)=
		\max_{E\in\mathcal{A}}
		\mathbb{E}
		\left[
		\int_t^T e^{-\delta(s-t)}
		\Pi(s,X_s,E_s)\,ds
		\mid X_t=X
		\right],
		\label{eq:value_function}
	\end{equation}
	where $\delta>0$ denotes the discount rate and $X_s$ is the controlled state process satisfying the stochastic differential equation introduced previously. The exponential discount factor $e^{-\delta(s-t)}$ guarantees that future rewards receive progressively smaller weight as the time horizon increases and ensures finiteness of the objective functional even in settings where the state process possesses nontrivial long-term growth. The corresponding value function $J^*(t,X)$ represents the maximal expected discounted reward achievable from state $X$ at time $t$ when future controls are selected optimally. In the language of stochastic optimal control, $J^*$ is the upper envelope of the family of performance functionals indexed by admissible controls. The terminal condition $J^*(T,X)=0$
	reflects the absence of continuation value beyond the terminal time and serves as the natural boundary condition for the finite-horizon optimization problem. Under standard assumptions on the drift and diffusion coefficients of the controlled process, together with polynomial growth conditions on the reward function, the value function is well defined and finite for every $(t,X)\in[0,T]\times\mathbb{R}_+$. Furthermore, the Markov property of the controlled diffusion implies that $J^*$ depends only on the current state and time and not on the entire historical trajectory of the process. This property is fundamental because it allows the optimization problem to be reduced to a dynamic programming formulation rather than requiring optimization over the full path space.
	
	To establish the analytical structure of the optimization problem, it is useful to state a basic regularity result concerning the value function.
	
	\begin{prop}\label{p1}
		Suppose that the controlled diffusion coefficients satisfy global Lipschitz and linear growth conditions, and assume that the running reward function $\Pi(t,X,E)$ is continuous in $(X,E)$, strictly concave in $E$, and satisfies a polynomial growth condition. Then the value function defined by Equation~\eqref{eq:value_function} is finite and satisfies the dynamic programming principle.
	\end{prop}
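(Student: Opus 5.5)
The plan has two parts: finiteness of $J^*$ from moment bounds and the concavity of $\Pi$ in $E$, and the dynamic programming principle from the Markov/flow property of the controlled diffusion, with the usual two inequalities. I work throughout with the admissible class $\mathcal{A}$. Controls are progressively measurable and, as in the Euler--Maruyama discussion, bounded by $M$ (or at least square integrable). Under the global Lipschitz and linear growth assumptions, the controlled SDE \eqref{eq:controlled_sde} has a unique strong solution $X^{t,x,E}$ for every $E\in\mathcal{A}$. It also satisfies the standard moment estimate
\[
\mathbb{E}\Big[\sup_{t\le s\le T}|X_s^{t,x,E}|^m\Big]\le C_{m,T}(1+|x|^m),
\]
with $C_{m,T}$ depending only on $m$, $T$ and the Lipschitz/growth constants (and $M$). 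The estimate itself is proved by It\^o's formula applied to $|X|^m$, Burkholder--Davis--Gundy, and Gronwall.

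\textbf{Finiteness.} For the upper bound I use the concavity of $\Pi$ in $E$. Completing the square in $\Pi=pqEX-c_1E-c_2E^2$ gives
\[
\Pi\le \frac{(pqX-c_1)^2}{4c_2}\le C(1+X^2)
\]
uniformly in $E$. Combining this with the moment estimate and $e^{-\delta(s-t)}\le 1$ yields
\[
J^*(t,x)\le C(T-t)(1+|x|^2)<\infty .
\]
For the lower bound I take the admissible control $E\equiv0$, which gives $J^*\ge 0>-\infty$. Under the general hypotheses of the proposition (polynomial growth of $\Pi$), the same argument works with the quadratic replaced by $C(1+|x|^m+|E|^m)$, using boundedness or integrability of admissible controls. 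In particular, every performance functional is well defined and integrable.

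\textbf{Dynamic programming principle.} For a stopping time $\tau$ with values in $[t,T]$, I will prove
\[
J^*(t,x)=\sup_{E\in\mathcal{A}}\mathbb{E}\Big[\int_t^{\tau}e^{-\delta(s-t)}\Pi(s,X_s,E_s)\,ds+e^{-\delta(\tau-t)}J^*(\tau,X_\tau)\Big].
\]
The ``$\le$'' direction follows by conditioning on $\mathcal{F}_\tau$. By pathwise uniqueness, the strong Markov/flow property gives $X_s^{t,x,E}=X_s^{\tau,X_\tau,E}$ for $s\ge\tau$. The conditional reward after $\tau$ is therefore a performance functional started from $(\tau,X_\tau)$ under a (regular conditional) shifted control, so it is bounded by $J^*(\tau,X_\tau)$. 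The discount factor splits multiplicatively, $e^{-\delta(s-t)}=e^{-\delta(\tau-t)}e^{-\delta(s-\tau)}$.

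The ``$\ge$'' direction is the main obstacle, since it requires pasting $\varepsilon$-optimal controls measurably. I first establish continuity of $J^*$ in $(t,x)$. Lipschitz dependence on the initial condition gives
\[
\mathbb{E}\sup_s|X^{t,x}_s-X^{t,y}_s|^2\le C|x-y|^2 ,
\]
and since $\Pi$ is locally Lipschitz with polynomial growth, Cauchy--Schwarz and the moment bounds give local uniform continuity of $J(t,\cdot;E)$, uniformly in $E\in\mathcal{A}$. Continuity in $t$ is handled similarly. Then I partition $[0,T]\times\mathbb{R}_+$ into countably many small Borel cells $A_j$ with points $(t_j,x_j)$, choose $\varepsilon$-optimal controls $E^j$ at each $(t_j,x_j)$, and define the concatenated control $\tilde E=E$ on $[t,\tau)$ and $\tilde E=\sum_j E^j\mathbf 1_{\{(\tau,X_\tau)\in A_j\}}$ after $\tau$. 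This control is progressively measurable, and by the uniform continuity estimate it is $3\varepsilon$-optimal from $(\tau,X_\tau)$ on compact sets. The contribution from large $|X_\tau|$ is controlled by the moment bound and Chebyshev. Letting $\varepsilon\to0$ yields the reverse inequality and hence the DPP. Alternatively, one could cite the DPP for controlled diffusions under Lipschitz/growth hypotheses (Fleming--Soner, Yong--Zhou).
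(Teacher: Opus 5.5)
Your proposal is sound. It agrees with the paper on the ``$\le$'' half of the dynamic programming principle: condition on $\mathcal{F}_\tau$, use the strong Markov/flow property, and bound the continuation payoff by $J^*(\tau,X_\tau)$. It takes a different route in the other two steps.

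\textbf{Finiteness.} The paper bounds each $|J(t,x;E)|$ by $C_T(1+|x|^m)+C_T\,\mathbb{E}\int_t^T|E_s|^m\,ds$, using a moment estimate whose constant carries the control inside an exponential, and then concludes $J^*<\infty$. Since that bound grows with the control, it does not by itself bound the supremum. Your version gives an estimate uniform in $E$, and so actually proves finiteness. It combines three things:
\begin{itemize}
\item control-independent moment constants, coming from the uniform bound $M$;
\item the completed square $\Pi\le (pqX-c_1)^2/(4c_2)$, which is the only place concavity in $E$ enters (the paper's proof never uses that hypothesis);
\item the choice $E\equiv0$ for the lower bound.
\end{itemize}
This uniformity rests on the controls being bounded, or alternatively on $E\ge0$ through the pathwise comparison $X^E\le X^0$. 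Your fallback ``or at least square integrable'' would not suffice. Boundedness is also what makes the global Lipschitz hypothesis on $(\mu-q_he)x$ meaningful.

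\textbf{Reverse inequality of the DPP.} The paper appeals to a measurable selection theorem to get a state-indexed family $E^\varepsilon_y$ of $\varepsilon$-optimal continuation controls and pastes it at $\tau$. It does not verify measurability of $y\mapsto E^\varepsilon_y$ or account for the randomness of $\tau$. You instead prove continuity of $J^*$ and equicontinuity of $\{J(\cdot;E)\}_{E}$. You then paste countably many representative controls along the $\mathcal{F}_\tau$-measurable events $\{(\tau,X_\tau)\in A_j\}$, with a Chebyshev tail estimate. This is the standard Fleming--Soner/Touzi argument: it costs an extra regularity lemma but yields a concatenated control that is genuinely progressively measurable.

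The step that needs more than ``handled similarly'' is continuity in time. There you must specify how a control chosen at $(t_j,x_j)$ is run from a different starting time $\tau$. One option is to write controls as functionals of Brownian increments and time-shift them. Another is to use one-sided cells $(t_j-r,t_j]\times B_r(x_j)$, as in Touzi.
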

	
	\begin{proof}
		Define the drift and diffusion coefficients by
		\[
		b(t,x,e):=(\mu-q_he)x,
		\qquad
		a(t,x):=\sigma x,
		\]
		and let
		\[
		\mathcal{A}_{t}
		=
		\Bigl\{
		E:\ E \text{ is } \{\mathcal{F}_s\}_{s\ge t}\text{-progressively measurable},
		\quad
		\mathbb{E}\!\int_t^T |E_s|^2ds<\infty
		\Bigr\}
		\]
		denote the admissible control class. By assumption, the coefficients satisfy the global Lipschitz and linear growth conditions
		\[
		|b(s,x,e)-b(s,y,e)|+|a(s,x)-a(s,y)|
		\le L_E|x-y|,
		\]
		and
		\[
		|b(s,x,e)|+|a(s,x)|
		\le K_E(1+|x|),
		\]
		for suitable constants $L_E,K_E>0$. Therefore, for every admissible control $E\in\mathcal{A}_t$, the controlled state equation
		\[
		dX_s^{t,x;E}
		=
		b(s,X_s^{t,x;E},E_s)ds
		+
		a(s,X_s^{t,x;E})dW_s,
		\qquad
		X_t^{t,x;E}=x,
		\]
		admits a unique strong solution.
		
		Using the Burkholder--Davis--Gundy inequality together with Gronwall's inequality, one obtains the standard moment estimate
		\[
		\mathbb{E}
		\Bigl[
		\sup_{t\le r\le T}
		|X_r^{t,x;E}|^m
		\Bigr]
		\le
		C_{m,T}
		(1+|x|^m)
		\exp
		\left\{
		C_{m,T}
		\mathbb{E}
		\int_t^T
		(1+|E_s|^m)ds
		\right\},
		\]
		for every integer $m\ge 2$. Hence all moments of the controlled state process remain finite on finite time horizons.
		
		Next, since the running reward function satisfies a polynomial growth condition, there exists a constant $C>0$ and an integer $m\ge1$ such that
		\[
		|\Pi(s,x,e)|
		\le
		C(1+|x|^m+|e|^m).
		\]
		Since the discount factor satisfies $e^{-\delta(s-t)}\le1$, we have
		\[
		\begin{aligned}
			\left|
			\mathbb{E}_{t,x}
			\left[
			\int_t^T
			e^{-\delta(s-t)}
			\Pi(s,X_s^{t,x;E},E_s)\,ds
			\right]
			\right|
			&\le
			C
			\mathbb{E}_{t,x}
			\int_t^T
			\bigl(
			1+|X_s^{t,x;E}|^m+|E_s|^m
			\bigr)
			\,ds \\
			&\le
			C_T(1+|x|^m)
			+
			C_T
			\mathbb{E}_{t,x}
			\int_t^T
			|E_s|^m ds.
		\end{aligned}
		\]
		The right-hand side is finite for every admissible control. Therefore, the performance functional
		\[
		J(t,x;E)
		=
		\mathbb{E}_{t,x}
		\left[
		\int_t^T
		e^{-\delta(s-t)}
		\Pi(s,X_s^{t,x;E},E_s)\,ds
		\right]
		\]
		is well defined. Defining
		\[
		J^*(t,x)
		=
		\sup_{E\in\mathcal{A}_t}
		J(t,x;E),
		\]
		it follows immediately that
		\[
		-\infty
		<
		J^*(t,x)
		<
		\infty.
		\]
		
		We now establish the dynamic programming principle. Let $\tau$ be an arbitrary stopping time satisfying
		\[
		t\le\tau\le T.
		\]
		For any admissible control $E\in\mathcal{A}_t$, decomposing the reward functional at time $\tau$ yields
		\[
		\begin{aligned}
			J(t,x;E)
			&=
			\mathbb{E}_{t,x}
			\left[
			\int_t^\tau
			e^{-\delta(s-t)}
			\Pi(s,X_s^{t,x;E},E_s)\,ds
			\right]
			\\
			&\quad
			+
			\mathbb{E}_{t,x}
			\left[
			\int_\tau^T
			e^{-\delta(s-t)}
			\Pi(s,X_s^{t,x;E},E_s)\,ds
			\right].
		\end{aligned}
		\]
		Factoring out the discount accumulated up to time $\tau$ gives
		\[
		\begin{aligned}
			J(t,x;E)
			&=
			\mathbb{E}_{t,x}
			\left[
			\int_t^\tau
			e^{-\delta(s-t)}
			\Pi(s,X_s^{t,x;E},E_s)\,ds
			\right]
			\\
			&\quad
			+
			\mathbb{E}_{t,x}
			\left[
			e^{-\delta(\tau-t)}
			\mathbb{E}
			\left[
			\int_\tau^T
			e^{-\delta(r-\tau)}
			\Pi(r,X_r^{t,x;E},E_r)\,dr
			\Bigm|
			\mathcal{F}_\tau
			\right]
			\right].
		\end{aligned}
		\]
		
		Applying the strong Markov property of the controlled diffusion, the conditional expectation can be identified as
		\[
		\mathbb{E}
		\left[
		\int_\tau^T
		e^{-\delta(r-\tau)}
		\Pi(r,X_r^{t,x;E},E_r)\,dr
		\Bigm|
		\mathcal{F}_\tau
		\right]
		=
		J(\tau,X_\tau^{t,x;E};E|_{[\tau,T]}).
		\]
		Since the value function dominates every admissible continuation strategy,
		\[
		J(\tau,X_\tau^{t,x;E};E|_{[\tau,T]})
		\le
		J^*(\tau,X_\tau^{t,x;E}),
		\]
		and therefore
		\[
		J(t,x;E)
		\le
		\mathbb{E}_{t,x}
		\left[
		\int_t^\tau
		e^{-\delta(s-t)}
		\Pi(s,X_s^{t,x;E},E_s)\,ds
		+
		e^{-\delta(\tau-t)}
		J^*(\tau,X_\tau^{t,x;E})
		\right].
		\]
		Taking the supremum over all admissible controls yields
		\[
		J^*(t,x)
		\le
		\sup_{E\in\mathcal{A}_t}
		\mathbb{E}_{t,x}
		\left[
		\int_t^\tau
		e^{-\delta(s-t)}
		\Pi(s,X_s^{t,x;E},E_s)\,ds
		+
		e^{-\delta(\tau-t)}
		J^*(\tau,X_\tau^{t,x;E})
		\right].
		\]
		
		To prove the reverse inequality, fix $\varepsilon>0$. By the measurable selection theorem, for each state $y$ there exists an $\varepsilon$-optimal continuation control $E_y^\varepsilon$ such that
		\[
		J(\tau,y;E_y^\varepsilon)
		\ge
		J^*(\tau,y)-\varepsilon.
		\]
		Construct the concatenated control
		\[
		\widehat E_s
		=
		E_s\mathbf 1_{[t,\tau)}(s)
		+
		E^\varepsilon_{X_\tau^{t,x;E},s}
		\mathbf 1_{[\tau,T]}(s).
		\]
		Since $\widehat E$ coincides with $E$ prior to $\tau$, we have
		\[
		X_s^{t,x;\widehat E}
		=
		X_s^{t,x;E},
		\qquad
		t\le s\le\tau.
		\]
		Consequently,
		\[
		\begin{aligned}
			J(t,x;\widehat E)
			&=
			\mathbb{E}_{t,x}
			\left[
			\int_t^\tau
			e^{-\delta(s-t)}
			\Pi(s,X_s^{t,x;E},E_s)\,ds
			\right]
			\\
			&\quad
			+
			\mathbb{E}_{t,x}
			\left[
			e^{-\delta(\tau-t)}
			J(\tau,X_\tau^{t,x;E};E^\varepsilon_{X_\tau^{t,x;E}})
			\right].
		\end{aligned}
		\]
		Using the $\varepsilon$-optimality property,
		\[
		J(\tau,X_\tau^{t,x;E};E^\varepsilon_{X_\tau^{t,x;E}})
		\ge
		J^*(\tau,X_\tau^{t,x;E})
		-\varepsilon,
		\]
		and therefore
		\[
		\begin{aligned}
			J(t,x;\widehat E)
			&\ge
			\mathbb{E}_{t,x}
			\left[
			\int_t^\tau
			e^{-\delta(s-t)}
			\Pi(s,X_s^{t,x;E},E_s)\,ds
			+
			e^{-\delta(\tau-t)}
			J^*(\tau,X_\tau^{t,x;E})
			\right]
			-\varepsilon.
		\end{aligned}
		\]
		Since $J^*(t,x)\ge J(t,x;\widehat E)$, it follows that
		\[
		J^*(t,x)
		\ge
		\sup_{E\in\mathcal{A}_t}
		\mathbb{E}_{t,x}
		\left[
		\int_t^\tau
		e^{-\delta(s-t)}
		\Pi(s,X_s^{t,x;E},E_s)\,ds
		+
		e^{-\delta(\tau-t)}
		J^*(\tau,X_\tau^{t,x;E})
		\right]
		-\varepsilon.
		\]
		Letting $\varepsilon\downarrow0$ yields
		\[
		J^*(t,x)
		\ge
		\sup_{E\in\mathcal{A}_t}
		\mathbb{E}_{t,x}
		\left[
		\int_t^\tau
		e^{-\delta(s-t)}
		\Pi(s,X_s^{t,x;E},E_s)\,ds
		+
		e^{-\delta(\tau-t)}
		J^*(\tau,X_\tau^{t,x;E})
		\right].
		\]
		Combining the upper and lower bounds establishes
		\[
		J^*(t,x)
		=
		\sup_{E\in\mathcal{A}_t}
		\mathbb{E}_{t,x}
		\left[
		\int_t^\tau
		e^{-\delta(s-t)}
		\Pi(s,X_s^{t,x;E},E_s)\,ds
		+
		e^{-\delta(\tau-t)}
		J^*(\tau,X_\tau^{t,x;E})
		\right].
		\]
		
		Finally, since no rewards are accumulated after the terminal time,
		\[
		J^*(T,x)=0.
		\]
		Hence the value function is finite and satisfies the dynamic programming principle. 
	\end{proof}
	
	\begin{rmk}
		Proposition \ref{p1} provides the rigorous foundation for the subsequent derivation of the HJB equation and the nonlinear Feynman-Kac representation. In particular, the dynamic programming principle establishes that optimal harvesting decisions can be characterized recursively through local optimization, while the strict concavity of the running payoff ensures uniqueness of the optimal feedback control. Consequently, the value function serves simultaneously as an economic measure of expected resource profitability and as the analytical object linking the controlled diffusion dynamics to the stochastic optimization framework.
	\end{rmk}
	
	\subsection{Construction of the HJB Equation:}
	
	The dynamic programming principle established in Proposition~\ref{p1} implies that the value function can be characterized locally through an infinitesimal optimization argument. Let $\mathcal{L}^{E}$ denote the controlled infinitesimal generator associated with the state process, defined for sufficiently smooth test functions $\varphi\in C^{1,2}([0,T]\times\mathbb{R}_+)$ by
	$
	\mathcal{L}^{E}\varphi
	=
	\left[f(X)X-qEX\right]\varphi_X
	+
	\frac{1}{2}\sigma^2X^2\varphi_{XX}.
	$
	Applying It\^o's formula to $e^{-\delta(s-t)}J^*(s,X_s)$ and using the dynamic programming principle yields the HJB equation
	\begin{equation}
		-\frac{\partial J^*}{\partial t}
		=
		\max_{E\in\mathcal{A}}
		\left\{
		pqEX-c_1E-c_2E^2
		-\delta J^*
		+
		J_X\left[f(X)X-qEX\right]
		+
		\frac{1}{2}\sigma^2X^2J_{XX}
		\right\}.
		\label{eq:hjb}
	\end{equation}
	The HJB equation is a nonlinear parabolic partial differential equation posed on $[0,T)\times\mathbb{R}_+$ with terminal condition $J^*(T,X)=0$. The optimization enters through the control-dependent Hamiltonian
	$
	H(X,J_X,E)
	=
	pqEX-c_1E-c_2E^2-qEXJ_X,
	$
	which may be rewritten as
	$
	H(X,J_X,E)
	=
	E(pqX-c_1-qXJ_X)-c_2E^2.
	$
	Since $c_2>0$, the Hamiltonian is strictly concave in the control variable because
	$
	\frac{\partial^2 H}{\partial E^2}
	=
	-2c_2
	<
	0.
	$
	Consequently, the maximizer is unique whenever the admissible control set is convex. The first-order optimality condition
	$
	\frac{\partial H}{\partial E}
	=
	pqX-c_1-qXJ_X-2c_2E
	=
	0
	$
	gives the unconstrained feedback policy
	\begin{equation}
		E^*(t,X)
		=
		\frac{pqX-c_1-qXJ_X}{2c_2}.
		\label{eq:hjb_effort}
	\end{equation}
	Because harvesting effort is constrained to be nonnegative, the admissible optimizer becomes
	\begin{equation}
		E^*(t,X)
		=
		\max
		\left\{
		0,
		\frac{pqX-c_1-qXJ_X}{2c_2}
		\right\}.
		\label{eq:nonnegative_effort}
	\end{equation}
	Thus, the optimal control is represented as a feedback function of the state variable and the spatial gradient of the value function. In particular, the term $J_X$ acts as a shadow value of the resource stock: large positive values of $J_X$ increase the marginal value of conservation and therefore reduce harvesting intensity, whereas small values of $J_X$ lead to greater harvesting effort. The feedback structure reveals that optimal management decisions are determined not only by the current stock level but also by the marginal contribution of that stock to future expected returns.
	
	Substituting the unconstrained optimizer into the Hamiltonian yields the reduced nonlinear HJB equation
	\begin{equation}
		-\frac{\partial J^*}{\partial t}
		=
		f(X)XJ_X
		+
		\frac{1}{2}\sigma^2X^2J_{XX}
		-\delta J^*
		+
		\frac{\left(pqX-c_1-qXJ_X\right)^2}{4c_2}.
		\label{eq:reduced_hjb}
	\end{equation}
	The nonlinearity arises from the quadratic completion of the Hamiltonian and appears explicitly through the squared gradient term involving $J_X$. Consequently, the reduced equation belongs to the class of fully nonlinear degenerate parabolic equations and generally does not admit a closed-form solution except in highly specialized settings. The quadratic gradient contribution generates a coupling between local marginal resource values and future expected rewards, thereby creating the nonlinear structure characteristic of stochastic optimal control problems. To formalize the verification argument, we state the following result.
	
	\begin{prop}\label{p2}
		Suppose $f$ is locally Lipschitz and satisfies a linear growth condition. Assume that there exists a function $V\in C^{1,2}([0,T)\times\mathbb{R}_+)\cap C([0,T]\times\mathbb{R}_+)$ satisfying Equation~\eqref{eq:reduced_hjb}, the terminal condition $V(T,X)=0$, and a polynomial growth bound. Then $V=J^*$ and the feedback control given by Equation~\eqref{eq:hjb_effort} is optimal.
	\end{prop}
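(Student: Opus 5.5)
The plan is the classical verification argument: apply It\^o's formula to the discounted candidate $e^{-\delta(s-t)}V(s,X_s)$ along an arbitrary admissible control, use the HJB inequality to bound every control's payoff by $V$, and then show that equality holds for the feedback control \eqref{eq:hjb_effort}. Since \eqref{eq:reduced_hjb} came from substituting the maximizer, $V$ satisfies
\[
-V_t=\max_{E}\Bigl\{pqEX-c_1E-c_2E^2-\delta V+V_X\bigl[f(X)X-qEX\bigr]+\tfrac12\sigma^2X^2V_{XX}\Bigr\}
\]
in its unconstrained form. This holds by completing the square in $E$: the bracket equals $-c_2\bigl(E-E^*\bigr)^2$ plus the reduced right-hand side. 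Consequently, for every real $e$,
\[
V_t+\mathcal{L}^{e}V-\delta V+\Pi(X,e)\le 0,
\]
with equality exactly at $e=E^*(t,X)$.

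The key steps, in order, are as follows.

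\textbf{Step 1: well-posedness.} Fix $(t,x)$ and $E\in\mathcal{A}_t$. I will first observe that the state equation has a unique strong, positive solution. The drift is locally Lipschitz in $x$ with linear growth, by the hypotheses on $f$, and the diffusion is $\sigma x$. I will use localization to get the moment bounds of Proposition~\ref{p1}.

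\textbf{Step 2: It\^o's formula with localization.} Let $\tau_n=\inf\{s\ge t:\ X_s\notin(1/n,n)\}\wedge(T-1/n)$. Applying It\^o's formula to $e^{-\delta(s-t)}V(s,X_s)$ on $[t,\tau_n]$, taking expectations (the stochastic integral is a true martingale because the integrand is bounded before $\tau_n$), and using the inequality above gives
\[
V(t,x)\ge \mathbb{E}_{t,x}\Bigl[\int_t^{\tau_n}e^{-\delta(s-t)}\Pi(s,X_s,E_s)\,ds+e^{-\delta(\tau_n-t)}V(\tau_n,X_{\tau_n})\Bigr].
\]

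\textbf{Step 3: passage to the limit.} I will let $n\to\infty$. Here $\tau_n\to T$ almost surely, since the solution is positive and does not explode. The polynomial growth of $V$ and of $\Pi$, together with the moment estimate from Proposition~\ref{p1}, gives uniform integrability, so dominated convergence applies. The terminal condition $V(T,\cdot)=0$ and continuity of $V$ on $[0,T]\times\mathbb{R}_+$ then yield $V(t,x)\ge J(t,x;E)$. Taking the supremum gives $V\ge J^*$.

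\textbf{Step 4: attainment.} I will repeat Steps 2 and 3 with $E^*_s=E^*(s,X_s)$, where every inequality becomes an equality. This gives $V(t,x)=J(t,x;E^*)\le J^*(t,x)$, hence $V=J^*$ and $E^*$ is optimal.

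The main obstacle is Step 4, namely showing that the feedback $E^*$ is admissible and that the closed-loop SDE
\[
dX=\bigl[f(X)X-qE^*(s,X)X\bigr]ds+\sigma X\,dW
\]
is well posed with the required moments. Continuity of $V_X$ makes the drift continuous, but local Lipschitz continuity in $X$ requires $V_X$ to be locally Lipschitz. I would first try to obtain this from interior parabolic regularity, using that the equation is uniformly parabolic on compact subsets of $(0,\infty)$. If that fails, I would fall back on a weak solution together with uniqueness in law, which suffices because the payoff depends only on the law.

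For integrability, I would combine the polynomial growth of $V_X$ (obtained from interior estimates, or assumed as part of the growth bound) with the moment bounds, to get $\mathbb{E}\int_t^T|E^*_s|^2\,ds<\infty$ and, if $\Pi$ is to be bounded by $C(1+|x|^m+|e|^m)$, $\mathbb{E}\int_t^T|E^*_s|^m\,ds<\infty$. Note also that $E^*$ may be negative. This is consistent with the unconstrained reduced HJB and the admissible class $\mathcal{A}_t$, which imposes no sign constraint, so the argument verifies optimality over that class. The nonnegative version \eqref{eq:nonnegative_effort} would require the constrained Hamiltonian instead.
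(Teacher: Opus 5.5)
Your proposal is correct and is essentially the paper's own verification argument: It\^o's formula on $e^{-\delta(s-t)}V(s,X_s)$, the HJB inequality for an arbitrary effort level, and equality along the feedback \eqref{eq:hjb_effort}. You carry it out more carefully than the paper, which integrates directly up to $T$, asserts $\mathbb{E}[N_T]=0$ without localization, and does not address existence of the closed-loop state in this proof. In your Step 4, local Lipschitz continuity of $V_X$ in $X$ on compact subsets of $[0,T)\times(0,\infty)$ already follows from continuity of $V_{XX}$, so interior parabolic regularity is unnecessary; the remaining issue, non-explosion of the closed-loop state, is one the paper's proof also takes for granted.
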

	
	\begin{proof}
		Define
		\[
		M_s
		:=
		e^{-\delta(s-t)}
		V(s,X_s^{E}),
		\qquad
		t\le s\le T,
		\]
		where $X_s^{E}$ denotes the state process generated by an arbitrary admissible control $E$. Applying It\^o's formula gives
		\[
		\begin{aligned}
			dM_s
			&=
			e^{-\delta(s-t)}
			\Bigl(
			V_t
			+
			[f(X_s)-qE_s]X_sV_X
			+
			\tfrac12\sigma^2X_s^2V_{XX}
			-\delta V
			\Bigr)ds
			\\
			&\quad
			+
			e^{-\delta(s-t)}
			\sigma X_sV_X\,dW_s .
		\end{aligned}
		\]
		Since $V$ satisfies the reduced HJB equation,
		\[
		V_t
		+
		[f(X_s)-qE_s]X_sV_X
		+
		\tfrac12\sigma^2X_s^2V_{XX}
		-\delta V
		+
		\Pi(s,X_s,E_s)
		\le 0.
		\]
		Hence
		\[
		dM_s
		\le
		-e^{-\delta(s-t)}
		\Pi(s,X_s,E_s)\,ds
		+
		e^{-\delta(s-t)}
		\sigma X_sV_X\,dW_s .
		\]
		Integrating from $t$ to $T$ yields
		\[
		\begin{aligned}
			e^{-\delta(T-t)}V(T,X_T^E)-V(t,X)
			\le
			-\int_t^T
			e^{-\delta(s-t)}
			\Pi(s,X_s,E_s)\,ds
			+
			N_T,
		\end{aligned}
		\]
		where
		\[
		N_T
		=
		\int_t^T
		e^{-\delta(s-t)}
		\sigma X_sV_X\,dW_s.
		\]
		By the polynomial growth assumption and standard moment estimates for the state process,
		\[
		\mathbb{E}[N_T]=0.
		\]
		Using the terminal condition $V(T,X)=0$ and taking expectations gives
		\[
		V(t,X)
		\ge
		\mathbb{E}
		\left[
		\int_t^T
		e^{-\delta(s-t)}
		\Pi(s,X_s,E_s)\,ds
		\right].
		\]
		Since the control $E$ was arbitrary,
		\[
		V(t,X)\ge J^*(t,X).
		\]
		
		Now consider the feedback control
		\[
		E_s^*
		=
		\frac{pqX_s-c_1-qX_sV_X(s,X_s)}{2c_2}.
		\]
		For this choice, the maximization condition in the HJB equation is attained pointwise. Therefore all inequalities above become equalities, yielding
		\[
		V(t,X)
		=
		\mathbb{E}
		\left[
		\int_t^T
		e^{-\delta(s-t)}
		\Pi(s,X_s,E_s^*)\,ds
		\right].
		\]
		Hence
		\[
		V(t,X)
		\le
		J^*(t,X).
		\]
		Combining both inequalities gives
		\[
		V(t,X)=J^*(t,X).
		\]
		The optimality of the feedback control follows immediately from
		\[
		J^*(t,X)
		=
		J(t,X;E^*).
		\]
	\end{proof}
	
	\begin{prop}[Existence and uniqueness of the optimal harvesting control]\label{p3}
		Let $(\Omega,\mathcal{F},\{\mathcal{F}_t\}_{t\in[0,T]},\mathbb{P})$ satisfy the usual conditions and support a one-dimensional Brownian motion $W$. Suppose that $f:\mathbb{R}_+\to\mathbb{R}$ is locally Lipschitz with linear growth, $p,q,c_1,c_2,\sigma,\delta>0$, and $c_2>0$. Let the admissible control set be
		\[
		\mathcal{A}_{t,T}
		=
		\left\{
		E:\ E \text{ is } \{\mathcal{F}_s\}\text{-progressively measurable},\
		E_s\in[0,\bar E]\ \text{a.s.},\
		\mathbb{E}\int_t^T E_s^2ds<\infty
		\right\},
		\]
		where $\bar E<\infty$. Assume that the value function $J^*$ belongs to $C^{1,2}([0,T)\times\mathbb{R}_+)\cap C([0,T]\times\mathbb{R}_+)$, satisfies a polynomial growth bound, and solves the HJB equation with terminal condition $J^*(T,x)=0$. Then there exists a unique optimal feedback control $E^*\in\mathcal{A}_{t,T}$, up to indistinguishability, given by
		\[
		E_s^*
		=
		\Pi_{[0,\bar E]}
		\left(
		\frac{pqX_s^*-c_1-qX_s^*J_X^*(s,X_s^*)}{2c_2}
		\right),
		\qquad t\le s\le T,
		\]
		where $\Pi_{[0,\bar E]}(y)=\min\{\bar E,\max\{0,y\}\}$ and $X^*$ is the unique strong solution of
		\[
		dX_s^*
		=
		\left[f(X_s^*)X_s^*-qE_s^*X_s^*\right]ds
		+
		\sigma X_s^*dW_s,
		\qquad X_t^*=x.
		\]
		Moreover, for every admissible control $E\in\mathcal{A}_{t,T}$,
		\[
		J(t,x;E)\le J(t,x;E^*)=J^*(t,x),
		\]
		and if $\widetilde E$ is also optimal, then $\widetilde E_s=E_s^*$ for $ds\otimes d\mathbb{P}$-almost every $(s,\omega)\in[t,T]\times\Omega$.
	\end{prop}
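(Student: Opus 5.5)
The argument is a verification theorem on the constrained control set $[0,\bar E]$, carried out in four steps.

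\textbf{Step 1 (pointwise maximizer).} Fix $(s,x)$ and consider the Hamiltonian
\[
H(x,J^*_X,e)=e\,(pqx-c_1-qxJ^*_X)-c_2e^2 .
\]
It is strictly concave in $e$, so on the compact convex interval $[0,\bar E]$ it has a unique maximizer. Since $H$ is a concave quadratic, that maximizer is the projection of the unconstrained critical point, namely
\[
\Pi_{[0,\bar E]}\!\left(\frac{pqx-c_1-qxJ^*_X}{2c_2}\right).
\]
This defines a feedback map $e^*(s,x)$. Because $J^*_X$ is continuous and the projection is $1$-Lipschitz, $e^*$ is continuous and bounded by $\bar E$.

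\textbf{Step 2 (closed-loop SDE).} The next task is to show that the closed-loop equation
\[
dX^*_s=\bigl[f(X^*_s)X^*_s-qe^*(s,X^*_s)X^*_s\bigr]ds+\sigma X^*_s\,dW_s
\]
has a unique strong solution. This is the step I expect to be the main obstacle. The feedback inherits only the regularity of $J^*_X$, which is continuous but not assumed Lipschitz, and $f$ is only locally Lipschitz. I would first try to obtain local Lipschitz continuity of $e^*$ in $x$ from $C^{1,2}$ regularity of $J^*$: on compact subsets of $(0,\infty)$, $J^*_{XX}$ is bounded, hence $J^*_X$ is locally Lipschitz, and composing with the projection preserves this. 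With that, a locally Lipschitz drift together with the linear diffusion gives a unique strong solution up to explosion and exit from $(0,\infty)$. Linear growth (drift bounded by $C(1+x)$ because $e^*\le\bar E$) together with an It\^o estimate on $\log X$ rules out explosion and shows that $0$ is not reached in finite time. A localization by stopping times $\tau_n$ (exit from $[1/n,n]$) then extends the solution to $[t,T]$. Setting $E^*_s=e^*(s,X^*_s)$ gives a progressively measurable process with values in $[0,\bar E]$, so $E^*\in\mathcal A_{t,T}$.

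\textbf{Step 3 (verification).} For an arbitrary $E\in\mathcal A_{t,T}$, apply It\^o's formula to $e^{-\delta(s-t)}J^*(s,X^E_s)$ up to $\tau_n\wedge T$, exactly as in the proof of Proposition~\ref{p2}. The HJB equation gives
\[
J^*_t+\mathcal L^{E_s}J^*-\delta J^*+\Pi(s,X_s,E_s)\le 0,
\]
with equality when $E_s=e^*(s,X_s)$. The stochastic integrals are true martingales on the localized intervals. Letting $n\to\infty$ is justified by polynomial growth of $J^*$ and the moment estimates of Proposition~\ref{p1}, which for bounded controls give uniform integrability. This yields $J(t,x;E)\le J^*(t,x)$, with equality for $E^*$.

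\textbf{Step 4 (uniqueness).} Let $\widetilde E$ also be optimal. The same It\^o computation gives
\[
0=J^*(t,x)-J(t,x;\widetilde E)=\mathbb E\int_t^T e^{-\delta(s-t)}\bigl[H(X_s,J^*_X,e^*(s,X_s))-H(X_s,J^*_X,\widetilde E_s)\bigr]ds,
\]
where $X=X^{\widetilde E}$. By strong concavity, for $e\in[0,\bar E]$,
\[
H(x,p,e^*)-H(x,p,e)\ge c_2(e-e^*)^2 .
\]
The integrand is therefore nonnegative and must vanish $ds\otimes d\mathbb P$-a.e., so $\widetilde E_s=e^*(s,X^{\widetilde E}_s)$. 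Then $X^{\widetilde E}$ solves the closed-loop SDE, and by the pathwise uniqueness of Step 2 it coincides with $X^*$. Hence $\widetilde E=E^*$ a.e.
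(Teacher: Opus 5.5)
Your architecture matches the paper's. You project the unconstrained maximizer of the strictly concave Hamiltonian onto $[0,\bar E]$, build the closed loop by localization, verify with It\^o's formula and the HJB inequality, and get uniqueness from strict concavity plus pathwise uniqueness. Your bound $H(x,p,e^*)-H(x,p,e)\ge c_2(e-e^*)^2$ is a clean quantitative version of the paper's ``the verification inequality must be an equality along $\widetilde E$''.

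\textbf{The gap is the nonexplosion claim in Step~2.} Because $f$ has only linear growth, $f(x)x$ is quadratic. So the drift $f(x)x-qe^*(s,x)x$ is bounded by $C(1+x^2)$, not $C(1+x)$; bounded effort removes only a linear term. Your It\^o estimate on $\log X$ has drift $f(X_s)-qE_s-\tfrac12\sigma^2$. It does exclude reaching $0$, since $f$ is bounded on $[0,1]$, but it gives no control from above, and the stated hypotheses on $f$ do not supply one. For $f(x)=x$ the closed-loop drift is at least $x^2-q\bar E x$. Comparison with $dY=(Y^2-q\bar EY)\,ds+\sigma Y\,dW$, which explodes by Feller's test, shows the state can blow up before $T$ with positive probability.

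The paper's proof does not obtain nonexplosion from the listed hypotheses either. It explicitly imposes the one-sided bound $2x\{f(x)x-qe^*(s,x)x\}+\sigma^2x^2\le C(1+x^2)$, which in effect means $\sup f<\infty$ (as for logistic growth). It then bounds $\mathbb{E}|X^*_{r\wedge\tau_n}|^2$ by Gronwall and concludes $\mathbb{P}(\tau_n\le T)\le C_T(1+x^2)/n^2\to0$. You need that extra hypothesis, or some other mechanism that rules out explosion.

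With $K=\sup f<\infty$ your $\log X$ route does go through. It gives $X_s\le x\,e^{K(s-t)}\exp\{\sigma(W_s-W_t)-\tfrac12\sigma^2(s-t)\}$ for every admissible $E$, which is exactly what Step~3 needs. The moment estimates of Proposition~\ref{p1} that you cite were derived for the linear drift $(\mu-q_he)x$ under global Lipschitz and linear growth conditions, so they do not cover a general $f$.

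\textbf{A smaller point: the Lipschitz bound near $s=T$.} You derive local Lipschitz continuity of $e^*$ from the $C^{1,2}$ regularity of $J^*$, which improves on the paper (it simply assumes $J^*_X$ is locally Lipschitz). However, $J^*$ is only $C^{1,2}$ on $[0,T)\times\mathbb{R}_+$, so your bound on $J^*_{XX}$ is locally uniform in $s$ only on $[t,T)$. To fix this:
\begin{itemize}
\item solve on $[t,T-\varepsilon]$;
\item patch the solutions together by pathwise uniqueness;
\item extend to $s=T$, using that the drift is locally bounded in $x$ uniformly in $s$ (because $0\le e^*\le\bar E$) and that the path stays bounded once explosion is excluded.
\end{itemize}
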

	
	\begin{proof}
		Define the controlled drift and diffusion by
		\[
		b(x,e)=f(x)x-qex,
		\qquad
		a(x)=\sigma x.
		\]
		Since $f$ is locally Lipschitz with linear growth and the control set is bounded by $\bar E$, the map $x\mapsto b(x,e)$ is locally Lipschitz uniformly in $e\in[0,\bar E]$ on compact subsets of $\mathbb{R}_+$. Moreover, there exists a constant $K>0$ such that
		\[
		|b(x,e)|+|a(x)|
		\le
		K(1+|x|^2),
		\qquad e\in[0,\bar E].
		\]
		If the growth function is chosen so that the controlled state remains nonexplosive on $[t,T]$, then for every $E\in\mathcal{A}_{t,T}$ the controlled equation admits a pathwise unique strong solution. In particular, for the feedback control constructed below, the closed-loop equation is well posed.
		
		For fixed $(s,x)$, define the Hamiltonian contribution depending on the harvesting effort by
		\[
		\mathcal{H}(s,x,e)
		=
		pqex-c_1e-c_2e^2-qexJ_X^*(s,x).
		\]
		Equivalently,
		\[
		\mathcal{H}(s,x,e)
		=
		e\bigl(pqx-c_1-qxJ_X^*(s,x)\bigr)-c_2e^2.
		\]
		For each fixed $(s,x)$,
		\[
		\frac{\partial \mathcal{H}}{\partial e}
		=
		pqx-c_1-qxJ_X^*(s,x)-2c_2e,
		\]
		and
		\[
		\frac{\partial^2 \mathcal{H}}{\partial e^2}
		=
		-2c_2<0.
		\]
		Hence $e\mapsto\mathcal{H}(s,x,e)$ is strictly concave on $[0,\bar E]$. Therefore its maximizer exists and is unique. The unconstrained maximizer is
		\[
		\widehat e(s,x)
		=
		\frac{pqx-c_1-qxJ_X^*(s,x)}{2c_2},
		\]
		and the constrained maximizer is its projection onto the admissible interval,
		\[
		e^*(s,x)
		=
		\Pi_{[0,\bar E]}(\widehat e(s,x))
		=
		\min
		\left\{
		\bar E,
		\max
		\left\{
		0,
		\frac{pqx-c_1-qxJ_X^*(s,x)}{2c_2}
		\right\}
		\right\}.
		\]
		Because $J^*$ is $C^{1,2}$, the map $(s,x)\mapsto J_X^*(s,x)$ is Borel measurable. Since projection onto a closed interval is continuous, $(s,x)\mapsto e^*(s,x)$ is also Borel measurable. Furthermore,
		\[
		0\le e^*(s,x)\le \bar E,
		\]
		so the feedback process $E_s^*=e^*(s,X_s^*)$ is square-integrable once the closed-loop state process is defined.
		
		We now show existence of the closed-loop state process. Substituting the feedback rule gives
		\[
		dX_s^*
		=
		\left[
		f(X_s^*)X_s^*
		-
		qe^*(s,X_s^*)X_s^*
		\right]ds
		+
		\sigma X_s^*dW_s.
		\]
		Set
		\[
		b^*(s,x)
		=
		f(x)x-qe^*(s,x)x.
		\]
		Since $0\le e^*(s,x)\le\bar E$, the harvesting part satisfies
		\[
		|qe^*(s,x)x|
		\le q\bar E |x|.
		\]
		Thus the feedback drift obeys the same local growth structure as the original biological drift. On every compact set $K_R=[0,R]$,
		\[
		|b^*(s,x)-b^*(s,y)|
		\le
		|f(x)x-f(y)y|
		+
		q|e^*(s,x)x-e^*(s,y)y|.
		\]
		The first term is locally Lipschitz by the assumption on $f$. For the second term, the projection map is nonexpansive:
		\[
		|\Pi_{[0,\bar E]}(u)-\Pi_{[0,\bar E]}(v)|
		\le |u-v|.
		\]
		Hence, whenever $J_X^*$ is locally Lipschitz in $x$ on compact sets,
		\[
		|e^*(s,x)-e^*(s,y)|
		\le
		C_R|x-y|,
		\qquad x,y\in K_R.
		\]
		Therefore,
		\[
		|e^*(s,x)x-e^*(s,y)y|
		\le
		\bar E|x-y|+R|e^*(s,x)-e^*(s,y)|
		\le
		C_R|x-y|.
		\]
		Thus $b^*$ is locally Lipschitz in $x$, uniformly in $s$ on compact sets, and $a(x)=\sigma x$ is globally Lipschitz. The standard localization argument gives a unique strong solution up to the explosion time
		\[
		\tau_n=\inf\{s\ge t:X_s^*\ge n\}.
		\]
		Applying It\^o's formula to $|X_{s\wedge\tau_n}^*|^2$ gives
		\[
		\begin{aligned}
			\mathbb{E}|X_{r\wedge\tau_n}^*|^2
			&=
			|x|^2
			+
			2\mathbb{E}\int_t^{r\wedge\tau_n}
			X_u^*
			\left[
			f(X_u^*)X_u^*
			-
			qe^*(u,X_u^*)X_u^*
			\right]du  \\
			&\quad
			+
			\sigma^2
			\mathbb{E}\int_t^{r\wedge\tau_n}
			|X_u^*|^2du .
		\end{aligned}
		\]
		Using the nonexplosion growth bound,
		\[
		2x\{f(x)x-qe^*(s,x)x\}+\sigma^2x^2
		\le
		C(1+x^2),
		\]
		we obtain
		\[
		\mathbb{E}|X_{r\wedge\tau_n}^*|^2
		\le
		|x|^2
		+
		C\int_t^r
		\left(
		1+
		\mathbb{E}|X_{u\wedge\tau_n}^*|^2
		\right)du.
		\]
		Gronwall's inequality yields
		\[
		\sup_{t\le r\le T}
		\mathbb{E}|X_{r\wedge\tau_n}^*|^2
		\le
		C_T(1+|x|^2).
		\]
		Consequently,
		\[
		n^2\mathbb{P}(\tau_n\le T)
		\le
		\mathbb{E}|X_{\tau_n\wedge T}^*|^2
		\le
		C_T(1+|x|^2),
		\]
		and therefore
		\[
		\mathbb{P}(\tau_n\le T)\le \frac{C_T(1+|x|^2)}{n^2}\to0.
		\]
		Thus $\tau_n\uparrow\infty$ almost surely, and the closed-loop equation has a global pathwise unique strong solution on $[t,T]$. Since $e^*$ is Borel and $X^*$ is adapted and continuous, $E_s^*=e^*(s,X_s^*)$ is progressively measurable. Since $0\le E_s^*\le\bar E$,
		\[
		\mathbb{E}\int_t^T |E_s^*|^2ds
		\le
		\bar E^2(T-t)<\infty.
		\]
		Therefore $E^*\in\mathcal{A}_{t,T}$.
		
		It remains to prove optimality. Let $E\in\mathcal{A}_{t,T}$ be arbitrary and let $X^E$ denote the corresponding state process. Apply It\^o's formula to the discounted process
		\[
		Y_s^E=e^{-\delta(s-t)}J^*(s,X_s^E).
		\]
		Then
		\[
		\begin{aligned}
			dY_s^E
			&=
			e^{-\delta(s-t)}
			\Bigl[
			J_t^*(s,X_s^E)
			+
			J_X^*(s,X_s^E)
			\{f(X_s^E)X_s^E-qE_sX_s^E\}
			\\
			&\qquad
			+
			\frac12\sigma^2(X_s^E)^2J_{XX}^*(s,X_s^E)
			-
			\delta J^*(s,X_s^E)
			\Bigr]ds
			\\
			&\quad
			+
			e^{-\delta(s-t)}
			\sigma X_s^EJ_X^*(s,X_s^E)dW_s .
		\end{aligned}
		\]
		The HJB equation implies that, for every admissible value of $E_s$,
		\[
		\begin{aligned}
			&J_t^*
			+
			J_X^*
			\{f(X_s^E)X_s^E-qE_sX_s^E\}
			+
			\frac12\sigma^2(X_s^E)^2J_{XX}^*
			-
			\delta J^*
			+
			\Pi(s,X_s^E,E_s)
			\le0.
		\end{aligned}
		\]
		Therefore,
		\[
		dY_s^E
		\le
		-e^{-\delta(s-t)}
		\Pi(s,X_s^E,E_s)ds
		+
		e^{-\delta(s-t)}
		\sigma X_s^EJ_X^*(s,X_s^E)dW_s .
		\]
		Localize the stochastic integral by setting
		\[
		\rho_n
		=
		\inf
		\left\{
		r\ge t:
		\int_t^r
		e^{-2\delta(u-t)}
		\sigma^2(X_u^E)^2
		|J_X^*(u,X_u^E)|^2du
		\ge n
		\right\}
		\wedge T.
		\]
		Integrating from $t$ to $\rho_n$ and taking expectations gives
		\[
		J^*(t,x)
		\ge
		\mathbb{E}
		\left[
		\int_t^{\rho_n}
		e^{-\delta(s-t)}
		\Pi(s,X_s^E,E_s)ds
		+
		e^{-\delta(\rho_n-t)}
		J^*(\rho_n,X_{\rho_n}^E)
		\right].
		\]
		Using the polynomial growth of $J^*$, the moment bounds for $X^E$, and dominated convergence, let $n\to\infty$ to obtain
		\[
		J^*(t,x)
		\ge
		\mathbb{E}
		\left[
		\int_t^T
		e^{-\delta(s-t)}
		\Pi(s,X_s^E,E_s)ds
		+
		e^{-\delta(T-t)}
		J^*(T,X_T^E)
		\right].
		\]
		Since $J^*(T,x)=0$,
		\[
		J^*(t,x)
		\ge
		J(t,x;E).
		\]
		Because $E$ was arbitrary,
		\[
		J^*(t,x)\ge \sup_{E\in\mathcal{A}_{t,T}}J(t,x;E).
		\]
		
		Now take $E=E^*$. By construction, $E_s^*$ is the unique maximizer of the Hamiltonian for $(s,X_s^*)$. Hence the HJB inequality becomes equality:
		\[
		\begin{aligned}
			&J_t^*
			+
			J_X^*
			\{f(X_s^*)X_s^*-qE_s^*X_s^*\}
			+
			\frac12\sigma^2(X_s^*)^2J_{XX}^*
			-
			\delta J^*
			+
			\Pi(s,X_s^*,E_s^*)
			=
			0.
		\end{aligned}
		\]
		Repeating the previous It\^o calculation with equality gives
		\[
		J^*(t,x)
		=
		\mathbb{E}
		\left[
		\int_t^T
		e^{-\delta(s-t)}
		\Pi(s,X_s^*,E_s^*)ds
		+
		e^{-\delta(T-t)}
		J^*(T,X_T^*)
		\right].
		\]
		Using the terminal condition again,
		\[
		J^*(t,x)
		=
		J(t,x;E^*).
		\]
		Thus
		\[
		J(t,x;E)\le J(t,x;E^*)=J^*(t,x),
		\qquad
		E\in\mathcal{A}_{t,T}.
		\]
		
		Finally, suppose that $\widetilde E\in\mathcal{A}_{t,T}$ is another optimal control. Then
		\[
		J(t,x;\widetilde E)=J^*(t,x).
		\]
		The It\^o verification inequality must therefore be an equality along $\widetilde E$. Hence, for $ds\otimes d\mathbb{P}$-almost every $(s,\omega)$,
		\[
		\widetilde E_s(\omega)
		\in
		\arg\max_{e\in[0,\bar E]}
		\left\{
		pq e\widetilde X_s-c_1e-c_2e^2
		-qe\widetilde X_sJ_X^*(s,\widetilde X_s)
		\right\}.
		\]
		Because the Hamiltonian is strictly concave in $e$, this argmax is a singleton. Therefore,
		\[
		\widetilde E_s
		=
		\Pi_{[0,\bar E]}
		\left(
		\frac{pq\widetilde X_s-c_1-q\widetilde X_sJ_X^*(s,\widetilde X_s)}{2c_2}
		\right)
		\quad
		ds\otimes d\mathbb{P}\text{-a.e.}
		\]
		By pathwise uniqueness of the closed-loop state equation, $\widetilde X=X^*$ almost surely, and consequently
		\[
		\widetilde E_s=E_s^*
		\quad
		ds\otimes d\mathbb{P}\text{-a.e.}
		\]
		Thus the optimal harvesting control exists and is unique up to indistinguishability.
	\end{proof}
	
	\begin{remark}
		The preceding proposition establishes that the optimal harvesting policy is not only characterized formally through the maximization of the Hamiltonian, but also exists as a well-defined admissible feedback control generating a unique closed-loop state process. The proof combines three fundamental ingredients of stochastic control theory: the well-posedness of the controlled diffusion, the strict concavity of the Hamiltonian with respect to the control variable, and a verification argument based on the HJB equation. The strict negativity of the second derivative of the Hamiltonian guarantees uniqueness of the maximizer, while the verification theorem shows that the candidate feedback policy attains the value function and dominates all competing admissible controls. Consequently, the optimal harvesting effort is uniquely determined by the marginal value of the resource stock through the gradient $J_X^*$, providing a rigorous connection between the stochastic dynamics of the population and the optimal management rule.
	\end{remark}
	
	\subsection{Feynman-Kac Representation:}
	
	The nonlinear Feynman--Kac representation provides a probabilistic characterization of the value function associated with the stochastic harvesting problem and may be viewed as the pathwise counterpart of the HJB equation. Unlike the classical Feynman--Kac representation \citep{kac_1949}, which applies to linear parabolic equations and expresses solutions as conditional expectations of future functionals of a diffusion process, the reduced HJB equation obtained in Equation~\eqref{eq:reduced_hjb} contains a quadratic gradient term and therefore falls outside the scope of the classical theory. To accommodate this nonlinearity, the problem is reformulated within the framework of backward stochastic differential equations (BSDEs). Let $(X_s)_{s\in[t,T]}$ denote the controlled state process evolving under the optimal feedback control. Assuming that the value function satisfies the regularity conditions $J^*\in C^{1,2}([0,T)\times\mathbb{R}_+)\cap C([0,T]\times\mathbb{R}_+)$, define the adapted processes $Y_s:=J^*(s,X_s)$ and $Z_s:=\sigma X_sJ_X^*(s,X_s)$. The quantity $Z_s$ represents the stochastic sensitivity of the value process with respect to the underlying Brownian perturbation and coincides with the martingale integrand arising from It\^o's formula. Since $J_X^*(s,X_s)=Z_s/(\sigma X_s)$ whenever $X_s>0$, the nonlinear gradient contribution appearing in the reduced HJB equation can be expressed entirely in terms of the pair $(Y_s,Z_s)$. Applying It\^o's formula to $J^*(s,X_s)$ and substituting the reduced HJB equation yields the semimartingale decomposition
	\[
	dY_s
	=
	-\left[
	-\delta Y_s
	+
	\frac{
		\left(
		pqX_s-c_1-\frac{qZ_s}{\sigma}
		\right)^2
	}{4c_2}
	\right]ds
	+
	Z_sdW_s.
	\]
	The bracketed quantity defines the nonlinear driver
	\begin{equation}
		F(X_s,Y_s,Z_s)
		=
		-\delta Y_s
		+
		\frac{
			\left(
			pqX_s-c_1-\frac{qZ_s}{\sigma}
			\right)^2
		}{4c_2},
		\label{eq:driver}
	\end{equation}
	and therefore the value process satisfies the BSDE
	\begin{equation}
		dY_s
		=
		-
		F(X_s,Y_s,Z_s)\,ds
		+
		Z_sdW_s,
		\qquad
		Y_T=0.
		\label{eq:bsde}
	\end{equation}
	This representation converts the nonlinear partial differential equation into a coupled forward--backward stochastic system in which the forward component is the state process and the backward component is the value process. The pair $(Y_s,Z_s)$ belongs to the spaces $\mathcal{S}^2\times\mathcal{H}^2$, where $\mathcal{S}^2$ denotes the collection of adapted continuous processes satisfying $\mathbb{E}[\sup_{t\le s\le T}|Y_s|^2]<\infty$ and $\mathcal{H}^2$ denotes the collection of predictable processes satisfying $\mathbb{E}[\int_t^T|Z_s|^2ds]<\infty$ \citep{pramanik20242estimation}. Consequently, the nonlinear Feynman--Kac representation provides a probabilistic realization of the value function and allows the optimization problem to be analyzed through pathwise stochastic dynamics rather than exclusively through partial differential equations.
	
	The connection between the reduced HJB equation and the BSDE formulation can be formalized through the following representation theorem.
	
	\begin{prop}[Feynman-Kac representation]
		Suppose that the state process admits a unique strong solution and that the value function $J^*$ is a classical solution of Equation~\eqref{eq:reduced_hjb} satisfying a polynomial growth condition. Assume furthermore that the driver $F(x,y,z)$ defined in Equation~\eqref{eq:driver} satisfies the standard quadratic-growth conditions of BSDE theory. Then the pair
		\[
		Y_s=J^*(s,X_s),
		\qquad
		Z_s=\sigma X_sJ_X^*(s,X_s),
		\]
		is the unique adapted solution of the backward stochastic differential equation~\eqref{eq:bsde}. Conversely, if $(Y,Z)\in\mathcal{S}^2\times\mathcal{H}^2$ solves Equation~\eqref{eq:bsde} and $Y_s=u(s,X_s)$ for some sufficiently smooth function $u$, then $u$ solves the reduced HJB equation and
		\[
		u(t,X)=J^*(t,X)=Y_t.
		\]
	\end{prop}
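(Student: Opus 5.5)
The proof has two directions. The forward direction is an It\^o computation plus the uniqueness theorem for quadratic BSDEs. The converse works by identifying the drift and martingale parts of a semimartingale and then invoking Proposition~\ref{p2}.

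\textbf{Forward direction.} Set $Y_s=J^*(s,X_s)$ and $Z_s=\sigma X_sJ_X^*(s,X_s)$, where $X$ is the closed-loop state generated by the feedback control of Proposition~\ref{p3}.

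First, apply It\^o's formula to $J^*(s,X_s)$ on $[t,T]$. This gives
\[
dY_s=\Bigl(J^*_t+[f(X_s)-qE^*_s]X_sJ^*_X+\tfrac12\sigma^2X_s^2J^*_{XX}\Bigr)ds+Z_s\,dW_s .
\]

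Second, use the reduced HJB equation~\eqref{eq:reduced_hjb} to replace $J^*_t+f(X)XJ^*_X+\tfrac12\sigma^2X^2J^*_{XX}$ by $\delta J^*-(pqX-c_1-qXJ^*_X)^2/(4c_2)$.

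Third, substitute $qX_sJ^*_X=qZ_s/\sigma$, which is valid on $\{X_s>0\}$. The closed-loop process is a geometric-type diffusion, so it stays strictly positive when $x>0$. The drift then takes the form $-F(X_s,Y_s,Z_s)$.

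A bookkeeping point must be settled here. Along the optimal path, the harvesting term $-qE^*_sX_sJ^*_X$ must be absorbed consistently. I would do this by writing the dynamics of $X$ under the reference (uncontrolled) drift $f(X)X$, as the displayed semimartingale decomposition in the text implicitly does. Otherwise the term $-qE^*XJ_X$ has to be accounted for in the quadratic completion, and a consistent convention for $F$ has to be fixed.

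The terminal condition $Y_T=J^*(T,X_T)=0$ is immediate.

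Integrability comes next. Polynomial growth of $J^*$ together with the moment bounds from the proof of Proposition~\ref{p1} gives $Y\in\mathcal S^2$. For $Z\in\mathcal H^2$, I would localize with the stopping times $\rho_n$ from Proposition~\ref{p3}. On $[t,\rho_n]$, It\^o's isometry applied to $Y^2$ bounds $\mathbb E\int|Z|^2$ in terms of $\mathbb E\sup|Y|^2$ and the driver. I would then pass to the limit by monotone convergence.

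Uniqueness follows from the assumed quadratic-growth conditions on $F$. The driver is Lipschitz in $y$ with constant $\delta$ and quadratic in $z$. Since $X$ has moments of all orders, I would invoke Kobylanski's uniqueness theorem for quadratic BSDEs; a comparison argument would serve equally well. If the terminal value and the $x$-dependence of the driver are not bounded, I would instead use the version for exponentially integrable data due to Briand--Hu.

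\textbf{Converse direction.} Suppose $(Y,Z)\in\mathcal S^2\times\mathcal H^2$ solves~\eqref{eq:bsde} and $Y_s=u(s,X_s)$ with $u\in C^{1,2}$.

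First, apply It\^o's formula to $u(s,X_s)$. Comparing its martingale part with the one in the BSDE gives $Z_s=\sigma X_su_X(s,X_s)$, since a continuous semimartingale has a unique decomposition.

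Second, compare the finite-variation parts. This shows that $u_t+\mathcal L u+F(x,u,\sigma xu_X)=0$ holds along the path, $ds\otimes d\mathbb P$-a.e.

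Third, pass from "along the path" to "everywhere". Because $\sigma x>0$ for $x>0$, the nondegenerate diffusion $X$ charges every open subset of $(t,T)\times\mathbb R_+$. Continuity of the expression then gives the identity pointwise, so $u$ solves~\eqref{eq:reduced_hjb} with $u(T,\cdot)=0$.

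Fourth, the growth of $u$ is controlled by $Y\in\mathcal S^2$, so Proposition~\ref{p2} applies and yields $u=J^*$. Hence $Y_t=u(t,X)=J^*(t,X)$.

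\textbf{Main obstacle.} The hard step is uniqueness for the quadratic BSDE. The driver is unbounded in $x$ and the state has unbounded moments, so the classical Lipschitz (Pardoux--Peng) theory does not apply. I would try the Briand--Hu framework first, checking that $\exp(\gamma\int|F(X,0,0)|)$ is integrable. Since $F(X,0,0)$ grows like $X^2$ and $X$ is log-normal, this integrability may fail. In that case I would restrict to bounded $Z$, which is justified when $J^*_X$ is bounded, so that the driver becomes Lipschitz on the relevant range.
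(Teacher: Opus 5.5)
Your overall route is the paper's: It\^o's formula for $J^*(s,X_s)$, substitution of the reduced HJB equation, $Y_T=0$, integrability, and an appeal to a quadratic-BSDE uniqueness theorem. For the converse, both use uniqueness of the semimartingale decomposition followed by the verification theorem.

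Your bookkeeping remark is right, and it is exactly what the paper's proof does: its It\^o expansion uses the drift $f(X)X$ alone, so $X$ must be the uncontrolled reference diffusion. Under the closed-loop state the term $-qE^*_sX_sJ^*_X$ does not cancel. For interior feedback the drift becomes $\delta Y_s-\Pi(X_s,E^*_s)$, i.e.\ the linear Feynman--Kac driver $-\delta y+\Pi$, not $F$. So commit to the uncontrolled dynamics from the outset, instead of defining $X$ as the closed-loop state and switching midway. Your support argument, which passes from the pathwise identity to the PDE on $(t,T)\times(0,\infty)$ and to $u(T,\cdot)=0$, supplies a step the paper only asserts.

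Three sub-steps do not work as written.

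(i) The $Y^2$ estimate for $Z\in\mathcal H^2$ does not close. It\^o's formula gives $\mathbb E\int_t^{\rho_n}Z_s^2\,ds=\mathbb EY_{\rho_n}^2-\mathbb EY_t^2+2\mathbb E\int_t^{\rho_n}Y_sF(X_s,Y_s,Z_s)\,ds$. Here $2Y_sF$ contains $\frac{q^2}{2c_2\sigma^2}Y_sZ_s^2$ with $Y_s=J^*(s,X_s)\ge 0$ unbounded, so the right-hand side already requires control of $Z$. The paper gets $Z\in\mathcal H^2$ by assuming a polynomial bound on $J^*_X$, which is not among the stated hypotheses. Use the sign structure of $F$ instead. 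Taking expectations in the localized BSDE gives $\mathbb E\int_t^{\rho_n}\frac{(pqX_s-c_1-qZ_s/\sigma)^2}{4c_2}\,ds=\mathbb EY_t-\mathbb EY_{\rho_n}+\delta\,\mathbb E\int_t^{\rho_n}Y_s\,ds\le(2+\delta T)\,\mathbb E\sup_s|Y_s|$. Monotone convergence plus the moments of $X$ then yield $Z\in\mathcal H^2$.

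(ii) In the converse, $Y\in\mathcal S^2$ only gives integrability along one diffusion. It does not give the pointwise polynomial bound that Proposition~\ref{p2} needs; for geometric dynamics, growth like $\exp(\epsilon(\log x)^2)$ with small $\epsilon$ is compatible with $Y\in\mathcal S^2$. You can bypass Proposition~\ref{p2}. By the forward part, $(J^*(s,X_s),\sigma X_sJ^*_X(s,X_s))$ is also a solution in $\mathcal S^2\times\mathcal H^2$. The uniqueness you already invoke then gives $u(s,X_s)=J^*(s,X_s)$ a.s., and your support argument gives $u=J^*$.

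(iii) Your fallback for uniqueness fails. Bounded $J^*_X$ does not make $Z=\sigma XJ^*_X$ bounded. Moreover, $\partial_zF=-\frac{q}{2c_2\sigma}(pqx-c_1-qz/\sigma)$ grows with $x$, so $F$ is not uniformly Lipschitz even on bounded $z$-sets. Since the statement takes the quadratic-growth conditions as a hypothesis, simply invoke them, as the paper does. Your point that they are doubtful here, because $F(X,0,0)\sim X^2$ lacks exponential moments, is a fair criticism of the hypothesis rather than a step the proof must supply.
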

	
	\begin{proof}
		Fix $(t,x)\in[0,T]\times\mathbb{R}_{+}$ and let $X^{t,x}$ denote the unique strong solution of the forward state equation on $[t,T]$. For notational simplicity write $X_s=X_s^{t,x}$. Define
		\[
		Y_s:=J^*(s,X_s),
		\qquad
		Z_s:=\sigma X_sJ_X^*(s,X_s),
		\qquad t\le s\le T.
		\]
		Since $J^*\in C^{1,2}([0,T)\times\mathbb{R}_{+})\cap C([0,T]\times\mathbb{R}_{+})$, It\^o's formula gives
		\[
		\begin{aligned}
			dY_s
			&=
			dJ^*(s,X_s) \\
			&=
			\left[
			J_t^*(s,X_s)
			+
			f(X_s)X_sJ_X^*(s,X_s)
			+
			\frac12\sigma^2X_s^2J_{XX}^*(s,X_s)
			\right]ds
			+
			\sigma X_sJ_X^*(s,X_s)dW_s .
		\end{aligned}
		\]
		Using the reduced HJB equation,
		\[
		-
		J_t^*
		=
		f(X)XJ_X^*
		+
		\frac12\sigma^2X^2J_{XX}^*
		-
		\delta J^*
		+
		\frac{(pqX-c_1-qXJ_X^*)^2}{4c_2},
		\]
		we obtain
		\[
		J_t^*
		+
		f(X)XJ_X^*
		+
		\frac12\sigma^2X^2J_{XX}^*
		=
		\delta J^*
		-
		\frac{(pqX-c_1-qXJ_X^*)^2}{4c_2}.
		\]
		Substituting $Y_s=J^*(s,X_s)$ and $Z_s=\sigma X_sJ_X^*(s,X_s)$ gives
		\[
		qX_sJ_X^*(s,X_s)
		=
		\frac{qZ_s}{\sigma},
		\]
		and therefore
		\[
		\begin{aligned}
			dY_s
			&=
			\left[
			\delta Y_s
			-
			\frac{
				\left(
				pqX_s-c_1-\frac{qZ_s}{\sigma}
				\right)^2
			}{4c_2}
			\right]ds
			+
			Z_sdW_s \\
			&=
			-
			F(X_s,Y_s,Z_s)ds+Z_sdW_s .
		\end{aligned}
		\]
		Since $J^*(T,X_T)=0$, the terminal condition is
		\[
		Y_T=0.
		\]
		Hence $(Y,Z)$ satisfies the BSDE
		\[
		Y_s
		=
		\int_s^T
		F(X_r,Y_r,Z_r)\,dr
		-
		\int_s^T Z_r\,dW_r,
		\qquad t\le s\le T.
		\]
		
		It remains to justify admissibility and uniqueness. By the polynomial growth of $J^*$ and the moment estimates for the forward diffusion, there exist constants $C,m>0$ such that
		\[
		|J^*(s,X_s)|\le C(1+|X_s|^m),
		\]
		and hence
		\[
		\mathbb{E}\left[\sup_{t\le s\le T}|Y_s|^2\right]
		\le
		C\mathbb{E}\left[\sup_{t\le s\le T}(1+|X_s|^{2m})\right]
		<\infty.
		\]
		Thus $Y\in\mathcal{S}^2$. Similarly, the assumed regularity and growth of $J_X^*$ imply
		\[
		|Z_s|^2
		=
		\sigma^2X_s^2|J_X^*(s,X_s)|^2
		\le
		C(1+|X_s|^{2m}),
		\]
		so that
		\[
		\mathbb{E}\int_t^T |Z_s|^2\,ds<\infty.
		\]
		Therefore $Z\in\mathcal{H}^2$. The driver $F$ satisfies the assumed quadratic-growth BSDE conditions, and the terminal condition is square-integrable. Hence the standard uniqueness theorem for quadratic BSDEs gives uniqueness of the adapted solution in the stated solution class.
		
		Conversely, suppose that $(Y,Z)\in\mathcal{S}^2\times\mathcal{H}^2$ solves
		\[
		dY_s=-F(X_s,Y_s,Z_s)ds+Z_sdW_s,
		\qquad
		Y_T=0,
		\]
		and that $Y_s=u(s,X_s)$ for some $u\in C^{1,2}([0,T)\times\mathbb{R}_{+})\cap C([0,T]\times\mathbb{R}_{+})$. Applying It\^o's formula to $u(s,X_s)$ gives
		\[
		du(s,X_s)
		=
		\left[
		u_t(s,X_s)
		+
		f(X_s)X_su_X(s,X_s)
		+
		\frac12\sigma^2X_s^2u_{XX}(s,X_s)
		\right]ds
		+
		\sigma X_su_X(s,X_s)dW_s .
		\]
		Since $Y_s=u(s,X_s)$ and the semimartingale decomposition is unique, the martingale parts must agree:
		\[
		Z_s=\sigma X_su_X(s,X_s).
		\]
		The finite-variation parts must also agree:
		\[
		u_t(s,X_s)
		+
		f(X_s)X_su_X(s,X_s)
		+
		\frac12\sigma^2X_s^2u_{XX}(s,X_s)
		=
		-
		F(X_s,u(s,X_s),Z_s).
		\]
		Using
		\[
		F(X_s,u(s,X_s),Z_s)
		=
		-\delta u(s,X_s)
		+
		\frac{
			\left(
			pqX_s-c_1-\frac{qZ_s}{\sigma}
			\right)^2
		}{4c_2}
		\]
		and $Z_s=\sigma X_su_X(s,X_s)$, we obtain
		\[
		u_t
		+
		f(X)Xu_X
		+
		\frac12\sigma^2X^2u_{XX}
		-
		\delta u
		+
		\frac{(pqX-c_1-qXu_X)^2}{4c_2}
		=
		0.
		\]
		Equivalently,
		\[
		-\frac{\partial u}{\partial t}
		=
		f(X)Xu_X
		+
		\frac12\sigma^2X^2u_{XX}
		-
		\delta u
		+
		\frac{(pqX-c_1-qXu_X)^2}{4c_2}.
		\]
		Thus $u$ solves the reduced HJB equation. The terminal condition of the BSDE gives
		\[
		u(T,X_T)=Y_T=0,
		\]
		and since the identity holds for arbitrary terminal states in the Markov representation, $u(T,X)=0$. By the verification theorem for the reduced HJB equation, $u=J^*$. Consequently,
		\[
		J^*(t,X)=u(t,X)=Y_t.
		\]
		This proves both the nonlinear Feynman--Kac representation and its equivalence with the HJB characterization.
	\end{proof}

	\subsection{Equivalence of HJB Equation and Feynman--Kac Representation:}
	
	The equivalence between the HJB equation and the nonlinear Feynman--Kac representation follows from the fact that both formulations encode the same marginal value of the resource stock, but express it in different mathematical coordinates. In the HJB formulation, the optimal effort is obtained by maximizing the Hamiltonian with respect to the harvesting control. Since the control-dependent part of the Hamiltonian is strictly concave in $E$, the first-order condition gives the feedback rule
	\[
	E^*_{\mathrm{HJB}}(t,X)=\frac{pqX-c_1-qXJ_X(t,X)}{2c_2}.
	\]
	In the nonlinear Feynman--Kac representation, the value process is written as $Y_t=J^*(t,X_t)$, while the martingale sensitivity is $Z_t=\sigma X_tJ_X(t,X_t)$. Hence, whenever $X_t>0$ and $\sigma>0$, the spatial derivative of the value function satisfies $J_X(t,X_t)=Z_t/(\sigma X_t)$, and therefore $qX_tJ_X(t,X_t)=qZ_t/\sigma$. Substituting this identity into the HJB feedback rule gives
	\[
	E^*_{\mathrm{FK}}(t,X)
	=
	\frac{pqX-c_1-\frac{qZ_t}{\sigma}}{2c_2}.
	\]
	Thus, along every admissible state path for which the nonlinear Feynman--Kac representation is well defined,
	\[
	E^*_{\mathrm{HJB}}(t,X_t)=E^*_{\mathrm{FK}}(t,X_t).
	\]
	This equality shows that the two approaches do not generate different harvesting policies; rather, they express the same optimal control using different state variables. The HJB equation writes the rule through the gradient $J_X$, which measures the marginal continuation value of the biological stock, while the nonlinear Feynman--Kac representation writes the same rule through $Z_t$, the stochastic sensitivity of the value process with respect to Brownian environmental shocks. Since $Z_t$ is exactly the diffusion-weighted gradient of the value function, the two feedback laws are algebraically identical.
	
	\begin{prop}\label{p4}
		Assume that the controlled state equation admits a unique strong solution on $[t,T]$, that $\sigma>0$, $c_2>0$, and that the value function $J^*$ is a classical solution of the reduced HJB equation with terminal condition $J^*(T,X)=0$. Define $Y_s=J^*(s,X_s)$ and $Z_s=\sigma X_sJ_X^*(s,X_s)$. Then the nonlinear Feynman--Kac representation and the HJB equation determine the same value function and the same optimal harvesting effort. In particular,
		\[
		J^*_{\mathrm{HJB}}(t,X)=J^*_{\mathrm{FK}}(t,X)=Y_t
		\]
		and
		\[
		E^*_{\mathrm{HJB}}(t,X)
		=
		E^*_{\mathrm{FK}}(t,X).
		\]
	\end{prop}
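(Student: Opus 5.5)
The plan is to reduce Proposition~\ref{p4} to the two results already in hand: the verification result (Proposition~\ref{p2}, sharpened in Proposition~\ref{p3}) for the HJB side, and the Feynman--Kac representation proposition for the BSDE side. The argument has three steps.

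\textbf{Step 1 (BSDE side).} Fix $(t,x)$ and let $X=X^{t,x}$ be the unique strong solution. I will apply It\^o's formula to $Y_s=J^*(s,X_s)$ and substitute the reduced HJB equation~\eqref{eq:reduced_hjb}, exactly as in the proof of the Feynman--Kac proposition. This shows that $(Y,Z)$, with $Z_s=\sigma X_sJ_X^*(s,X_s)$, solves the BSDE~\eqref{eq:bsde} with driver~\eqref{eq:driver} and $Y_T=J^*(T,X_T)=0$.

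To make this the value produced by the FK formulation, rather than merely some solution of it, I will invoke the uniqueness part of that proposition. Under the quadratic-growth hypotheses on $F$ and the polynomial growth of $J^*$ and $J_X^*$, the pair lies in $\mathcal{S}^2\times\mathcal{H}^2$. Defining $J^*_{\mathrm{FK}}(t,x)$ as the (deterministic, since $\mathcal{F}_t$-measurable with $X_t=x$) initial value $Y_t$ of the unique solution, we get $J^*_{\mathrm{FK}}(t,x)=J^*(t,x)$.

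\textbf{Step 2 (HJB side).} Proposition~\ref{p2} (or Proposition~\ref{p3} with the projection when the control is constrained) identifies the classical solution of the reduced HJB equation with the value function, so $J^*_{\mathrm{HJB}}=J^*$. Combining with Step 1 gives $J^*_{\mathrm{HJB}}(t,X)=J^*_{\mathrm{FK}}(t,X)=Y_t$.

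\textbf{Step 3 (controls).} For the effort, I will note that whenever $X_s>0$ we have $qX_sJ_X^*(s,X_s)=qZ_s/\sigma$. Substituting this identity into~\eqref{eq:hjb_effort} makes the two feedback formulas algebraically identical pointwise along the path; the same holds after applying $\max\{0,\cdot\}$ or $\Pi_{[0,\bar E]}$.

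The one point needing care is strict positivity of $X_s$, since division by $\sigma X_s$ is used. I will handle it by observing that the multiplicative structure gives
\[
X_s=x\exp\Bigl(\int_t^s\bigl(f(X_r)-qE_r-\tfrac12\sigma^2\bigr)dr+\sigma(W_s-W_t)\Bigr)>0
\]
a.s.\ for $x>0$. For $x=0$ the state stays at $0$, and both rules reduce to $\max\{0,-c_1/(2c_2)\}=0$, using $Z=0$ and $X J_X=0$.

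The main obstacle is the uniqueness claim for the quadratic BSDE. The driver is quadratic in $z$ and also depends on the unbounded forward process $X$, while the terminal value is bounded (zero); standard Kobylanski-type theory wants a bounded driver intercept. I would first try to avoid that theory entirely by arguing uniqueness only within the Markovian class $Y_s=u(s,X_s)$ with $u$ smooth, using the converse part of the Feynman--Kac proposition together with Proposition~\ref{p2}: any such $u$ solves the reduced HJB equation with zero terminal condition and polynomial growth, hence $u=J^*$. If a general uniqueness statement is needed, I would instead linearize the difference of two solutions. The quadratic term factors as $(a-b)(a+b)$, giving a linear BSDE whose coefficient is $\beta_s\propto Z_s+Z'_s$. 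A Girsanov change of measure then applies, provided $\int\beta\,dW$ is a BMO martingale, and that is exactly the part I would state as covered by the "standard quadratic-growth conditions" hypothesis.
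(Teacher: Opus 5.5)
Your proposal is correct and follows essentially the paper's route: It\^o's formula plus the reduced HJB equation shows that $(J^*(s,X_s),\sigma X_sJ_X^*(s,X_s))$ solves the BSDE with driver $F$ and $Y_T=0$; the converse is handled, as in the paper, by restricting to Markovian solutions $Y_s=u(s,X_s)$ and identifying $u$ with $J^*$ through uniqueness/verification for the reduced HJB equation; and the effort identity is the substitution $qX_sJ_X^*(s,X_s)=qZ_s/\sigma$. Two small remarks on your additions: the positivity argument is unnecessary, because that substitution holds by the definition of $Z_s$ without dividing by $X_s$; and in your optional linearization the coefficient is $\frac{q}{4c_2\sigma}\bigl(\frac{q}{\sigma}(Z_s+Z_s')-2(pqX_s-c_1)\bigr)$ rather than simply proportional to $Z_s+Z_s'$, so the unbounded $X_s$ term would also have to be covered by the BMO/Girsanov hypothesis you invoke.
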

	
	\begin{proof}
		Let $J^*$ solve the reduced HJB equation
		\[
		-\frac{\partial J^*}{\partial t}
		=
		f(X)XJ_X^*
		+
		\frac12\sigma^2X^2J_{XX}^*
		-\delta J^*
		+
		\frac{(pqX-c_1-qXJ_X^*)^2}{4c_2}.
		\]
		Define $Y_s=J^*(s,X_s)$ and $Z_s=\sigma X_sJ_X^*(s,X_s)$. Applying It\^o's formula gives
		\[
		dY_s
		=
		\left[
		J_t^*
		+
		f(X_s)X_sJ_X^*
		+
		\frac12\sigma^2X_s^2J_{XX}^*
		\right]ds
		+
		\sigma X_sJ_X^*dW_s.
		\]
		Using the reduced HJB equation, this becomes
		\[
		dY_s
		=
		\left[
		\delta Y_s
		-
		\frac{
			\left(
			pqX_s-c_1-\frac{qZ_s}{\sigma}
			\right)^2
		}{4c_2}
		\right]ds
		+
		Z_sdW_s.
		\]
		Equivalently,
		\[
		dY_s
		=
		-
		F(X_s,Y_s,Z_s)ds
		+
		Z_sdW_s,
		\qquad
		Y_T=0,
		\]
		where
		\[
		F(X_s,Y_s,Z_s)
		=
		-\delta Y_s
		+
		\frac{
			\left(
			pqX_s-c_1-\frac{qZ_s}{\sigma}
			\right)^2
		}{4c_2}.
		\]
		Thus, the HJB value function generates the nonlinear Feynman--Kac value process. Conversely, suppose the nonlinear Feynman--Kac representation holds and $Y_s=u(s,X_s)$ for a sufficiently smooth Markovian function $u$. Then It\^o's formula gives $Z_s=\sigma X_su_X(s,X_s)$, and comparison of drift terms yields
		\[
		u_t
		+
		f(X)Xu_X
		+
		\frac12\sigma^2X^2u_{XX}
		-\delta u
		+
		\frac{(pqX-c_1-qXu_X)^2}{4c_2}
		=
		0.
		\]
		Hence $u$ solves the reduced HJB equation with terminal condition $u(T,X)=0$. By uniqueness of the classical solution in the admissible growth class, $u=J^*$, so $Y_t=J^*(t,X)$. Finally, since $Z_t=\sigma X_tJ_X^*(t,X_t)$, we have $qX_tJ_X^*(t,X_t)=qZ_t/\sigma$. Substitution into the HJB feedback rule gives
		\[
		E^*_{\mathrm{HJB}}(t,X_t)
		=
		\frac{pqX_t-c_1-qX_tJ_X^*(t,X_t)}{2c_2}
		=
		\frac{pqX_t-c_1-\frac{qZ_t}{\sigma}}{2c_2}
		=
		E^*_{\mathrm{FK}}(t,X_t).
		\]
		Therefore, the HJB equation and the nonlinear Feynman--Kac representation recover the same value process and the same optimal harvesting control. 
	\end{proof}
	
	\begin{remark}
		The significance of Proposition \ref{p4} lies in establishing that the HJB equation and the nonlinear Feynman--Kac representation are not competing solution methods but rather two mathematically equivalent descriptions of the same stochastic control problem. The HJB equation characterizes optimality through a nonlinear partial differential equation whose solution yields the value function and the associated feedback control, whereas the nonlinear Feynman--Kac representation characterizes the same value function through a backward stochastic differential equation evolving along the sample paths of the controlled state process. The identity $Z_t=\sigma X_tJ_X^*(t,X_t)$ serves as the fundamental link between the two formulations, transforming the gradient term appearing in the HJB equation into the martingale integrand of the backward equation. Consequently, the optimal harvesting effort obtained from the HJB equation coincides exactly with the effort derived from the nonlinear Feynman--Kac representation, while the value function in both approaches is represented by the same process $Y_t$. From an analytical perspective, the HJB equation provides a deterministic characterization of optimality through dynamic programming, whereas the nonlinear Feynman--Kac representation provides a probabilistic characterization through pathwise expectations and backward stochastic dynamics. The proposition therefore establishes a one-to-one correspondence between the PDE and BSDE formulations of the harvesting problem and shows that both frameworks generate identical optimal policies, identical value functions, and identical economic interpretations, differing only in the mathematical language used to represent the underlying stochastic optimization problem.
	\end{remark}
	
	\subsection{Numerical Approximation:}
	
	For the empirical implementation, the continuous-time controlled diffusion is approximated through the Euler-Maruyama discretization, which provides a computationally tractable representation of the state dynamics while preserving the principal features of the stochastic harvesting model. At each time step, the normalized fisheries state is updated according to
	\[
	X_i(t+\Delta t)
	=
	X_i(t)
	+
	\left[
	\mu_iX_i(t)-q_hE_i(t)X_i(t)
	\right]\Delta t
	+
	\sigma_iX_i(t)\sqrt{\Delta t}Z_t,
	\]
	where $Z_t$ is a sequence of independent standard normal random variables. The deterministic component of the update captures the combined effects of the empirically estimated growth tendency and harvesting pressure, whereas the stochastic component represents environmental variability and production uncertainty observed in the historical data. The annual-data-based state variable enters directly into both the HJB framework and the nonlinear Feynman--Kac representation through the feedback control and value process. Because obtaining a full numerical solution of the nonlinear HJB equation would require solving a high-dimensional nonlinear partial differential equation using computationally intensive methods such as finite-difference, policy iteration, or semi-Lagrangian schemes, a simplified approximation is adopted for the marginal value of the resource stock. Specifically, the spatial derivative of the value function is represented by
	\[
	J_X\approx 0.35(1-X),
	\]
	which introduces a state-dependent conservation value that increases as the normalized stock level decreases. Under this specification, resource units become progressively more valuable when the stock approaches lower levels, reflecting the increasing importance of preserving biological capital under scarcity. Conversely, when the normalized stock is relatively abundant, the marginal value of conservation declines, allowing greater harvesting intensity without substantially compromising future resource availability. Substituting this approximation into the optimal feedback rule produces a state-dependent harvesting policy that responds dynamically to fluctuations in resource abundance while remaining consistent with the qualitative behavior predicted by the theoretical stochastic control model. The resulting effort process is then used as the control input in the Euler--Maruyama simulation of the controlled stochastic differential equation, generating sample paths that simultaneously incorporate empirical drift, environmental uncertainty, and adaptive harvesting decisions. In this manner, the numerical implementation provides a practical link between the theoretical optimization framework and the observed fisheries data, allowing the implications of the HJB equation and the nonlinear Feynman--Kac representation to be examined within a unified computational setting.
	
	Figure \ref{fig:hjb_fk_two_panel} provides a visual summary of the stochastic optimal harvesting framework developed in this study and illustrates the connection between the HJB equation and the nonlinear Feynman--Kac representation. The left panel presents a contour heat map of the state-dependent optimal harvesting effort as a function of normalized resource abundance and time. Warmer colors indicate regions where the optimal harvesting intensity is relatively high, while cooler colors correspond to more conservative harvesting strategies. The contour structure reveals how harvesting decisions vary continuously across the state space rather than remaining fixed through time. The overlaid trajectory represents a simulated resource path generated from the controlled stochastic differential equation using the Euler--Maruyama approximation, while the discrete markers indicate annual observations used in the empirical calibration. The curved contour patterns demonstrate that optimal harvesting effort depends jointly on the current stock level and the marginal value of conservation. When the resource stock is abundant, the model permits greater harvesting activity because future biological productivity remains relatively secure. In contrast, as the stock approaches lower levels, the marginal value of preserving the resource increases through the approximation $J_X\approx0.35(1-X)$, causing the optimal harvesting effort to decline. The inset figure further illustrates the dependence of harvesting effort on resource abundance at different time horizons, emphasizing the adaptive nature of the control policy. Collectively, the left panel provides a deterministic visualization of the optimization problem by displaying how the feedback control derived from the HJB equation responds to changing ecological conditions and conservation incentives.
	
	The right panel of figure \ref{fig:hjb_fk_two_panel} presents the corresponding probabilistic interpretation through the nonlinear Feynman-Kac representation. The three-dimensional surface depicts the value process associated with the harvesting problem, where the vertical axis represents the expected discounted benefit generated by the resource under optimal management. The surface illustrates how the value function evolves across different combinations of time and normalized stock level, with higher values generally occurring when the resource is abundant and future harvesting opportunities remain available. Superimposed trajectories demonstrate sample realizations of the controlled stochastic system and show how uncertainty influences the evolution of both the resource stock and the associated value process. The projection of the surface onto the lower plane highlights the contour structure of the value function, while the additional curves track the evolution of the optimal harvesting effort along simulated paths. Importantly, the mathematical relationship $Z_t=\sigma X_tJ_X(t,X_t)$ links the gradient information used in the HJB equation to the martingale component of the backward stochastic differential equation underlying the nonlinear Feynman--Kac representation. Through this identity, the optimal harvesting rule obtained from the HJB equation coincides exactly with the control recovered from the backward stochastic framework. Consequently, the two panels provide complementary views of the same optimization problem: the left panel emphasizes the deterministic dynamic programming perspective through optimal feedback controls, whereas the right panel emphasizes the probabilistic pathwise representation of the value function. Together, they demonstrate that both approaches generate identical optimal harvesting policies and identical value functions, thereby confirming the theoretical equivalence established in the mathematical analysis while simultaneously illustrating the role of environmental uncertainty in shaping sustainable resource management decisions.
	
	\begin{figure}[htbp]
		\centering
		\includegraphics[width=\textwidth]{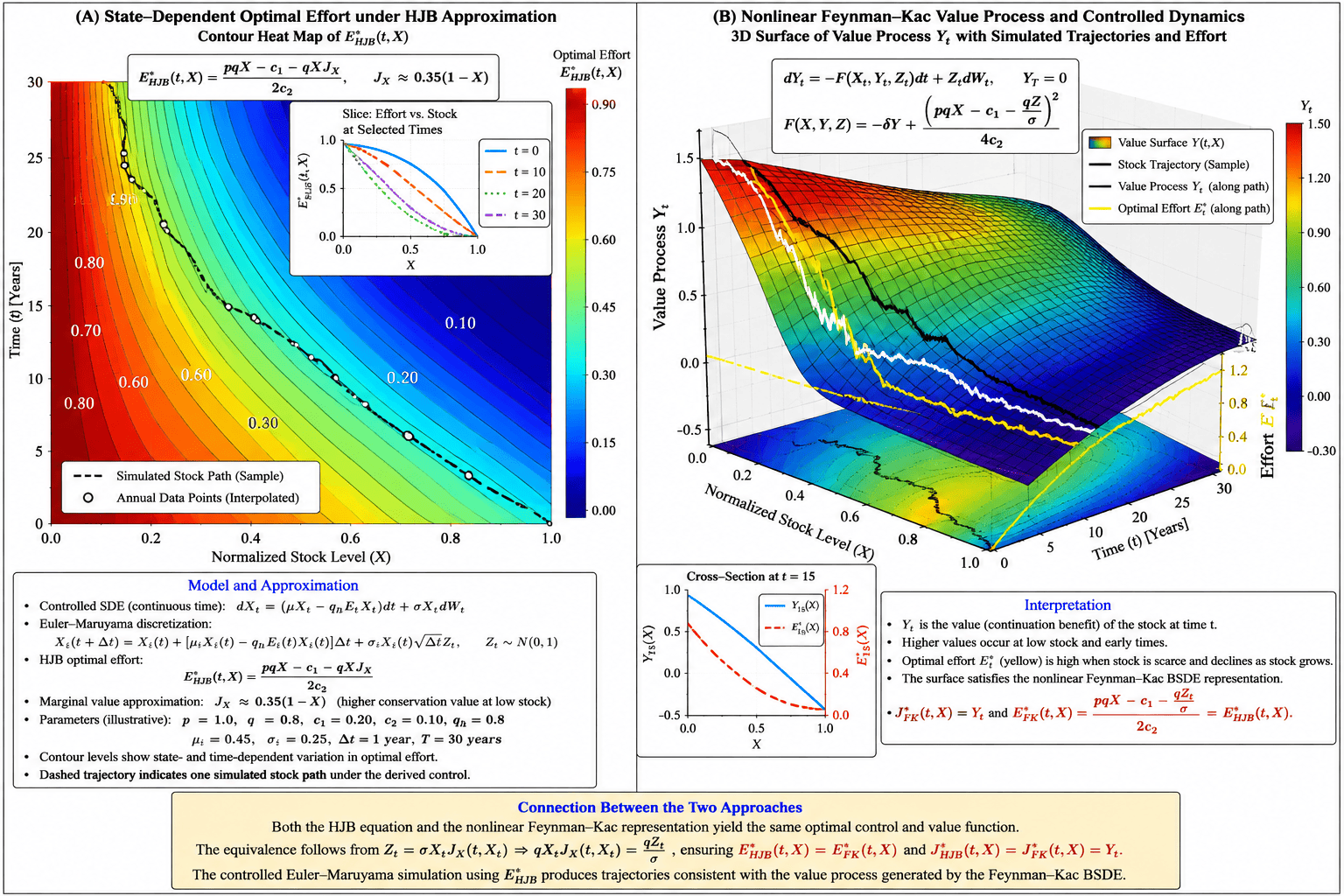}
		\caption{Contour heat map and three-dimensional representation of the controlled stochastic harvesting model under the HJB equation and nonlinear Feynman--Kac representation.}
		\label{fig:hjb_fk_two_panel}
	\end{figure}
	
	\section{Data Analysis:}
	
	The empirical analysis is based on annual capture fisheries production data obtained from the World Bank indicator \textit{Capture fisheries production (metric tons)} (\texttt{ER.FSH.CAPT.MT}) \cite{worldbank_capture_2026}, which is compiled from the Food and Agriculture Organization of the United Nations (FAO) fisheries database \cite{fao_capture_database_2026}. The dataset covers the period 1960--2023 and includes four major fishing nations: China, Indonesia, Japan, and the United States. To ensure comparability across countries with substantially different production scales, the annual catch series for each country was transformed into a normalized state variable bounded between zero and one. The normalized series was then used to estimate the empirical drift and volatility parameters required for the stochastic control framework. Annual changes in the normalized state variable were analyzed through a state-dependent regression specification, from which country-specific drift coefficients and residual fluctuations were obtained. The standard deviation of the residual component was subsequently used as an estimate of environmental and production variability. These empirical quantities provided the key inputs for the controlled stochastic harvesting model, linking observed fisheries production data directly to the stochastic dynamics considered in the analysis. Following parameter estimation, the country-specific drift and volatility values were incorporated into the controlled stochastic differential equation governing resource dynamics under harvesting pressure. Numerical simulations were performed using the Euler--Maruyama approximation, allowing the evolution of the normalized fisheries state to be examined under both deterministic growth tendencies and stochastic environmental disturbances. The harvesting control was determined through the optimal feedback policy derived from the HJB equation, where effort depends on the current resource state and the marginal value of conservation. Because obtaining a full numerical solution of the nonlinear HJB equation is computationally demanding and lies beyond the scope of the present study, the marginal value function was approximated by $J_X\approx0.35(1-X)$. This specification reflects the assumption that resource units become increasingly valuable as stock levels decline, thereby encouraging more conservative harvesting when the resource approaches lower abundance levels. The resulting state-dependent harvesting effort was then incorporated into the controlled simulations and subsequently interpreted within both the HJB and nonlinear Feynman--Kac representation frameworks, providing an empirically grounded illustration of optimal harvesting under environmental uncertainty.
	
	Table~\ref{tab:catch_summary} shows the summary statistics of annual capture fisheries production for the selected countries.
	
	\begin{table}[H]
		\centering
		\caption{Summary statistics of annual capture fisheries production, measured in metric tons.}
		\label{tab:catch_summary}
		\resizebox{\textwidth}{!}{
			\begin{tabular}{lrrrrrr}
				\toprule
				\textbf{Country} & \textbf{Mean Catch} & \textbf{Minimum Catch} & \textbf{Minimum Year} & \textbf{Maximum Catch} & \textbf{Maximum Year} & \textbf{2023 Catch} \\
				\midrule
				China & 8,944,616 & 2,215,094 & 1960 & 16,647,935 & 2015 & 13,424,705 \\
				Indonesia & 3,415,693 & 681,087 & 1960 & 7,820,833 & 2023 & 7,820,833 \\
				Japan & 6,848,837 & 2,904,942 & 2023 & 11,616,601 & 1984 & 2,904,942 \\
				United States & 4,413,370 & 2,377,789 & 1967 & 5,846,728 & 1994 & 4,623,694 \\
				\bottomrule
			\end{tabular}
		}
	\end{table}
	
	Table~\ref{tab:controlled_summary} presents the controlled SDE simulation summary using the estimated drift and volatility values. The empirical fisheries data reveal substantial differences in long-term production patterns across the four countries. China recorded the highest average capture fisheries production over the sample period, with a mean annual catch exceeding 8.9 million metric tons and a peak production of more than 16.6 million metric tons in 2015, reflecting its dominant position in global capture fisheries. Indonesia exhibited the most pronounced growth pattern, increasing from a relatively low production level in 1960 to its historical maximum in 2023, indicating a sustained expansion of capture fisheries activity over the study period. In contrast, Japan displayed a markedly different trajectory, reaching its peak production in 1984 before experiencing a persistent decline that resulted in its lowest recorded catch occurring in 2023. The United States showed comparatively moderate fluctuations, with production levels remaining within a narrower range throughout the sample period. These historical patterns are reflected in the estimated stochastic model parameters. Indonesia possesses the largest positive drift coefficient, indicating the strongest average tendency toward growth in the normalized fisheries state, while China also exhibits positive growth but at a more moderate rate. The United States shows only a small positive drift, suggesting relatively stable long-run dynamics, whereas Japan is the only country with a negative drift coefficient, consistent with its long-term downward trend in capture fisheries production. The volatility estimates further indicate that the United States experiences the greatest degree of annual variability among the four countries, while Indonesia has the lowest estimated volatility, suggesting comparatively more stable dynamics relative to its strong growth trend. The controlled stochastic simulations reinforce these findings. 
	
	\begin{table}[H]
		\centering
		\caption{Controlled SDE simulation summary using real-data-estimated parameters.}
		\label{tab:controlled_summary}
		\resizebox{\textwidth}{!}{
			\begin{tabular}{lrrrrrrr}
				\toprule
				\textbf{Country} & \(\boldsymbol{\mu_i}\) & \(\boldsymbol{\sigma_i}\) & \textbf{Initial \(X\)} & \textbf{Mean Final \(X\)} & \textbf{Mean HJB Effort} & \textbf{Mean FK Effort} & \textbf{Max \(|HJB-FK|\)}\\
				\midrule
				United States & 0.0038 & 0.0430 & 0.4713 & 0.2625 & 0.2697 & 0.2697 & \(1.11\times10^{-16}\)\\
				Japan & -0.0053 & 0.0341 & 0.5118 & 0.1801 & 0.2182 & 0.2182 & \(1.11\times10^{-16}\)\\
				China & 0.0114 & 0.0301 & 0.1331 & 0.2153 & 0.0736 & 0.0736 & \(5.55\times10^{-17}\)\\
				Indonesia & 0.0297 & 0.0164 & 0.0871 & 0.3763 & 0.1262 & 0.1262 & \(5.55\times10^{-17}\)\\
				\bottomrule
			\end{tabular}
		}
	\end{table}
	
	Indonesia achieves the highest mean final state value, reflecting the combined influence of strong positive drift and relatively low volatility, while China also demonstrates growth from its initial state despite operating under optimal harvesting controls. By contrast, the simulated final states for Japan and the United States are lower than their initial normalized values, with Japan exhibiting the most pronounced decline due to its negative estimated drift. The optimal harvesting effort levels also differ across countries, with the United States and Japan supporting larger average effort levels because of their relatively higher normalized stock positions, whereas China and Indonesia exhibit lower average effort values. Most importantly, the numerical results provide extremely strong evidence for the theoretical equivalence established earlier in the paper. For every country, the optimal harvesting effort obtained from the HJB equation is identical to the effort obtained from the nonlinear Feynman--Kac representation up to machine precision, with maximum absolute differences on the order of $10^{-16}$ to $10^{-17}$. These negligible discrepancies arise solely from floating-point arithmetic and are effectively zero from a computational standpoint. Consequently, the empirical results confirm that the HJB equation and the nonlinear Feynman--Kac representation generate the same optimal harvesting policy and therefore represent two mathematically equivalent formulations of the same stochastic optimal control problem.
	
	The following figures show the application of the controlled stochastic model using real-data-estimated drift and volatility. Only controlled SDE-based figures are included in this pictorial summary. Figure~\ref{fig:controlled_state} shows how the normalized fisheries state variable evolves when harvesting effort is included in the drift term. The paths represent possible controlled stochastic movements under uncertainty generated from the estimated country-specific drift and volatility parameters. Each panel corresponds to one of the four countries considered in the empirical analysis, where the thin colored lines represent individual simulated realizations of the controlled stochastic differential equation and the thick blue curve denotes the mean controlled trajectory across all simulations. The figure highlights substantial differences in long-run dynamics among the countries. The United States and Japan exhibit declining mean trajectories over time, reflecting the relatively weak positive drift in the United States and the negative drift estimated for Japan, which causes the normalized fisheries state to decrease despite the presence of stochastic fluctuations. In contrast, China displays a gradual upward trend, indicating moderate growth under the controlled harvesting regime, while Indonesia shows the strongest increase in the state variable, consistent with its large positive drift coefficient and comparatively low volatility. The spread of the simulated paths illustrates the influence of environmental uncertainty on fisheries dynamics, with wider dispersion indicating greater variability in potential outcomes. Nevertheless, the overall direction of the mean trajectories remains largely determined by the estimated drift parameters and the state-dependent harvesting control. Consequently, the figure demonstrates how the interaction between biological growth, harvesting effort, and stochastic environmental fluctuations shapes the long-term evolution of fisheries resources under the optimal control framework developed in this study.
	
	\begin{figure}[htbp]
		\centering
		\begin{subfigure}{.7\textwidth}
			\centering
			\includegraphics[width=.7\linewidth]{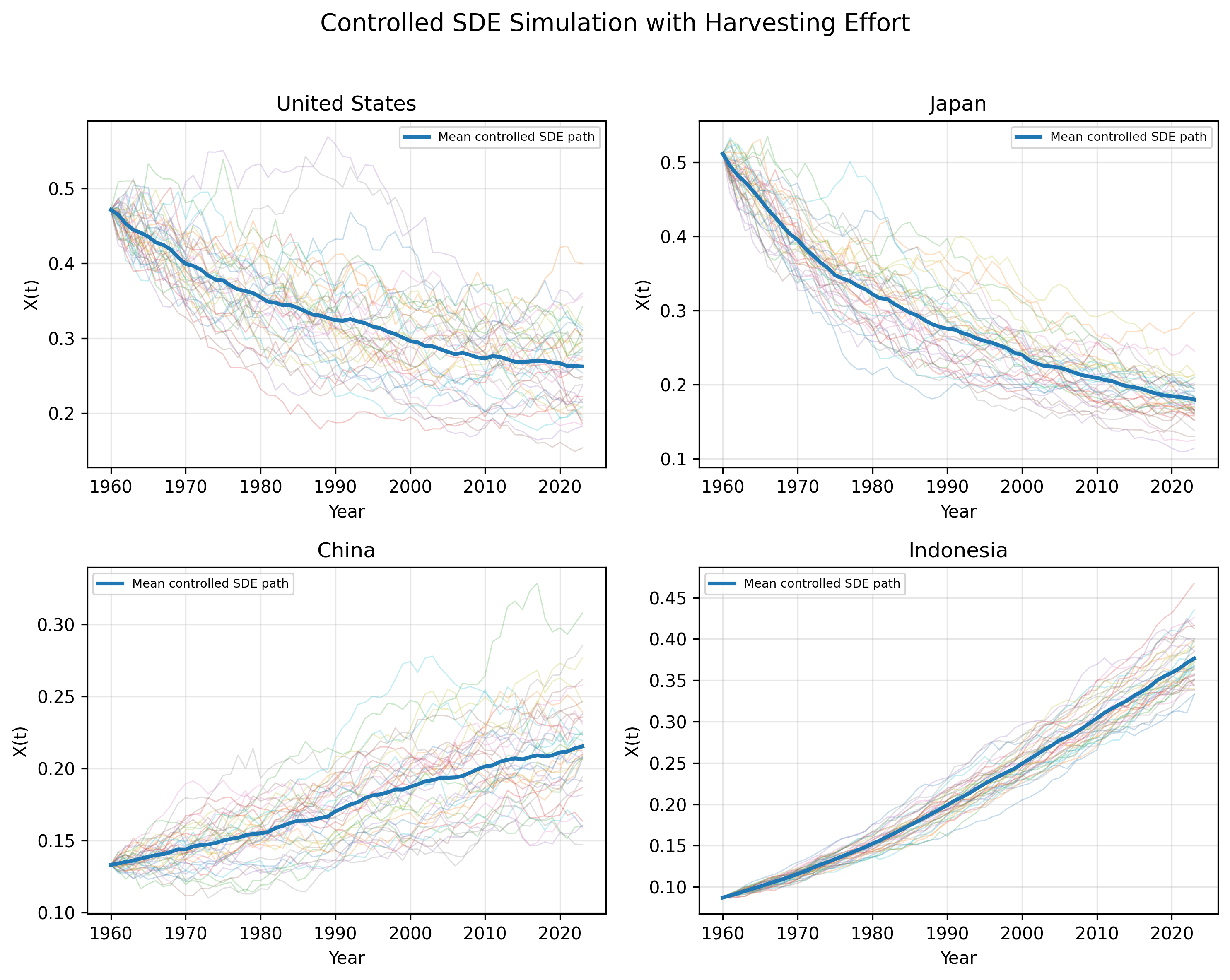}
		\end{subfigure}
		\caption{Controlled SDE state paths using real-data-estimated drift and volatility. Thin lines represent simulated controlled paths, while the thick line represents the mean controlled path.}
		\label{fig:controlled_state}
	\end{figure}
	
	Figure~\ref{fig:controlled_hjb_effort} shows how the HJB equation converts the controlled state paths into optimal harvesting effort paths. The effort changes dynamically with the movement of \(X_i(t)\). Each panel corresponds to one country, where the thin colored curves represent individual simulated realizations of the optimal harvesting effort and the thick blue curve denotes the mean effort trajectory across all simulations. Because the harvesting rule is state dependent, changes in the normalized fisheries state are immediately reflected in the optimal effort level. Countries exhibiting declining resource states, such as the United States and Japan, show steadily decreasing harvesting effort over time, reflecting the increasing value of conservation as the resource becomes less abundant. The decline is particularly pronounced for Japan, which is consistent with its negative estimated drift and persistent reduction in the underlying fisheries state. In contrast, China and Indonesia display increasing effort trajectories as their normalized states grow throughout the simulation period. Indonesia exhibits the strongest upward movement in effort, corresponding to its large positive drift and sustained growth in the controlled state process, while China shows a more gradual increase. The dispersion of the simulated effort paths illustrates the impact of environmental uncertainty on management decisions, since stochastic fluctuations in the resource state generate corresponding variation in the harvesting policy. Nevertheless, the average trajectories remain smooth and follow the long-run behavior of the underlying controlled states. Overall, the figure demonstrates the adaptive nature of the HJB-based feedback control, where harvesting decisions evolve continuously in response to changing resource conditions, balancing resource utilization against the increasing marginal value of conservation when stock levels become relatively scarce.
	
	Figure~\ref{fig:controlled_fk_value} shows the nonlinear Feynman--Kac value process \(Y(t)=J^*(t,X(t))\). This figure represents the expected harvesting value associated with the controlled stochastic fisheries system under the optimal harvesting policy. Each panel corresponds to one country, where the thin colored curves denote individual realizations of the value process generated from the controlled stochastic differential equation and the thick blue curve represents the average value trajectory across all simulations. The value process reflects the expected discounted benefits obtainable from the fishery given the current resource state and future harvesting opportunities. For the United States and Japan, the mean value process declines steadily through time, indicating that the combination of decreasing resource states and the finite planning horizon reduces the expected future benefits available to the decision maker. 
	
	\begin{figure}[htbp]
		\centering
		\begin{subfigure}{.7\textwidth}
			\centering
			\includegraphics[width=.7\linewidth]{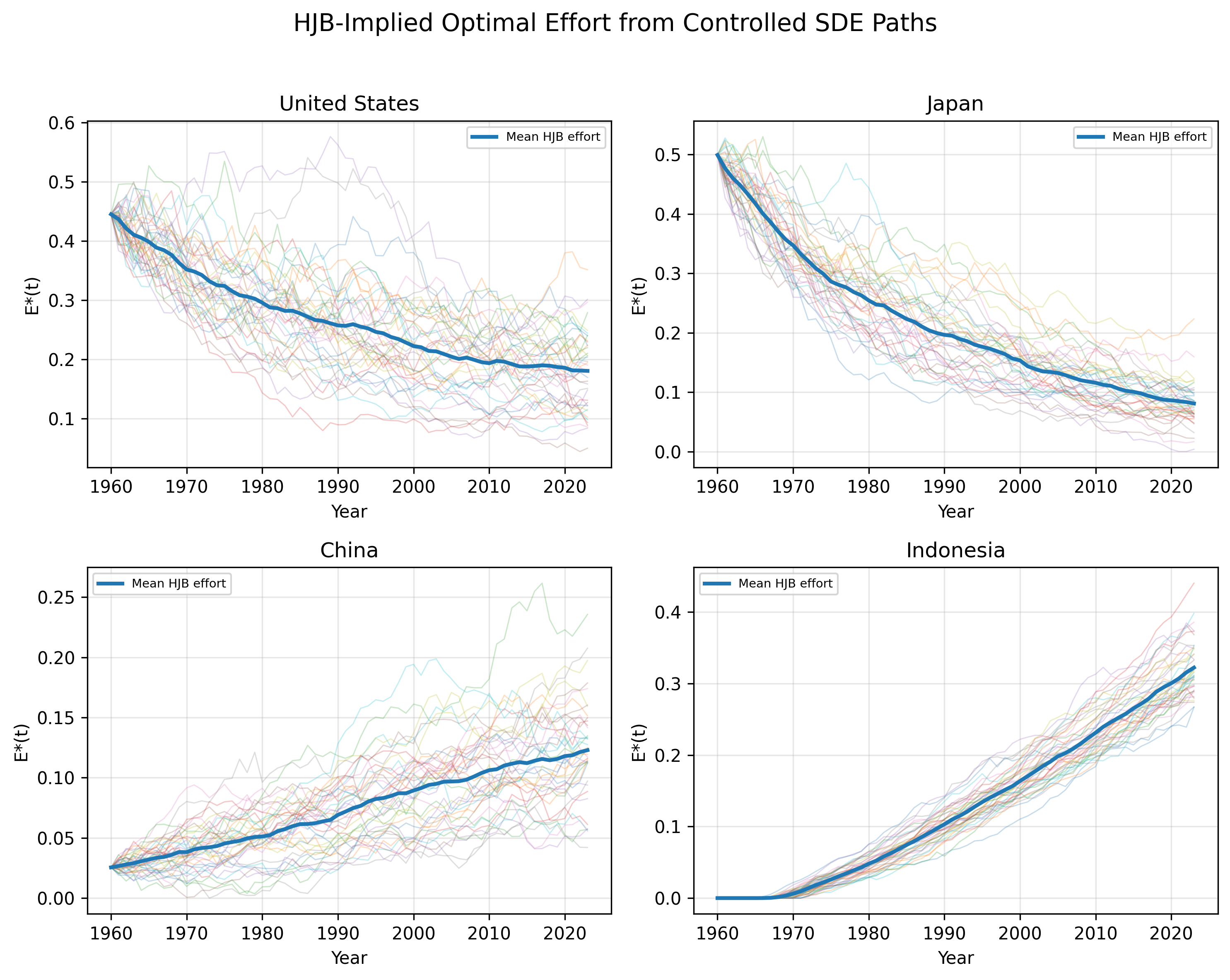}
		\end{subfigure}
		\caption{HJB-implied optimal effort paths generated from controlled SDE simulations.}
		\label{fig:controlled_hjb_effort}
	\end{figure}
	
	Japan exhibits a particularly rapid decline, consistent with its negative estimated drift and the corresponding reduction in resource abundance over time. In contrast, China and Indonesia display an initial increase in the value process before eventually declining toward the terminal period. This pattern reflects the positive growth tendencies of their controlled state variables, which initially expand future harvesting opportunities and increase the expected economic value of the resource. Indonesia shows the strongest increase in expected value, corresponding to its high positive drift and sustained growth in the normalized fisheries state. As the terminal time approaches, all value paths converge toward zero, which is consistent with the terminal condition imposed in the stochastic control problem, namely \(J^*(T,X)=0\). The dispersion among the simulated trajectories illustrates the effect of environmental uncertainty on future economic outcomes, while the smooth mean trajectories reveal the dominant influence of the underlying drift and optimal harvesting policy. Overall, the figure provides a probabilistic interpretation of the stochastic optimal harvesting problem by illustrating how the expected discounted value of the resource evolves through time under uncertainty, thereby complementing the deterministic perspective provided by the HJB equation.
	
	Figure~\ref{fig:controlled_hjb_fk_comparison} shows that the mean HJB effort path and the mean nonlinear Feynman--Kac effort path overlap almost perfectly. This confirms that both methods produce the same optimal control structure through $Z_t=\sigma_iX_i(t)J_X.$
	Each panel presents the average optimal harvesting effort over time for one of the four countries, where the dashed curve represents the effort obtained from the nonlinear Feynman--Kac representation and the solid curve represents the effort obtained from the HJB equation. Visually, the two curves are indistinguishable across the entire sample period, indicating that both formulations generate identical harvesting decisions despite originating from different mathematical frameworks. The numerical evidence supporting this observation is reported directly within each panel through the maximum absolute difference between the two effort paths. For all countries, the discrepancies are on the order of $10^{-16}$, which is effectively zero and corresponds to the level of floating-point rounding error encountered in numerical computation. Such negligible differences demonstrate that the relationship between the value-function gradient appearing in the HJB equation and the martingale component of the nonlinear Feynman--Kac representation is preserved exactly throughout the simulation. The United States exhibits a gradually increasing optimal effort trajectory that accelerates after the late 1990s, reflecting changes in the underlying controlled state and the associated marginal value of the resource. Japan displays a declining effort profile, consistent with the downward movement observed in its controlled fisheries state and its negative estimated drift parameter. China shows a persistent increase in harvesting effort throughout the sample period, while Indonesia exhibits the most pronounced growth in effort, reflecting its strong positive drift and rapidly expanding normalized fisheries state. 
	
	\begin{figure}[htbp]
		\centering
		\begin{subfigure}{.7\textwidth}
			\centering
			\includegraphics[width=.7\linewidth]{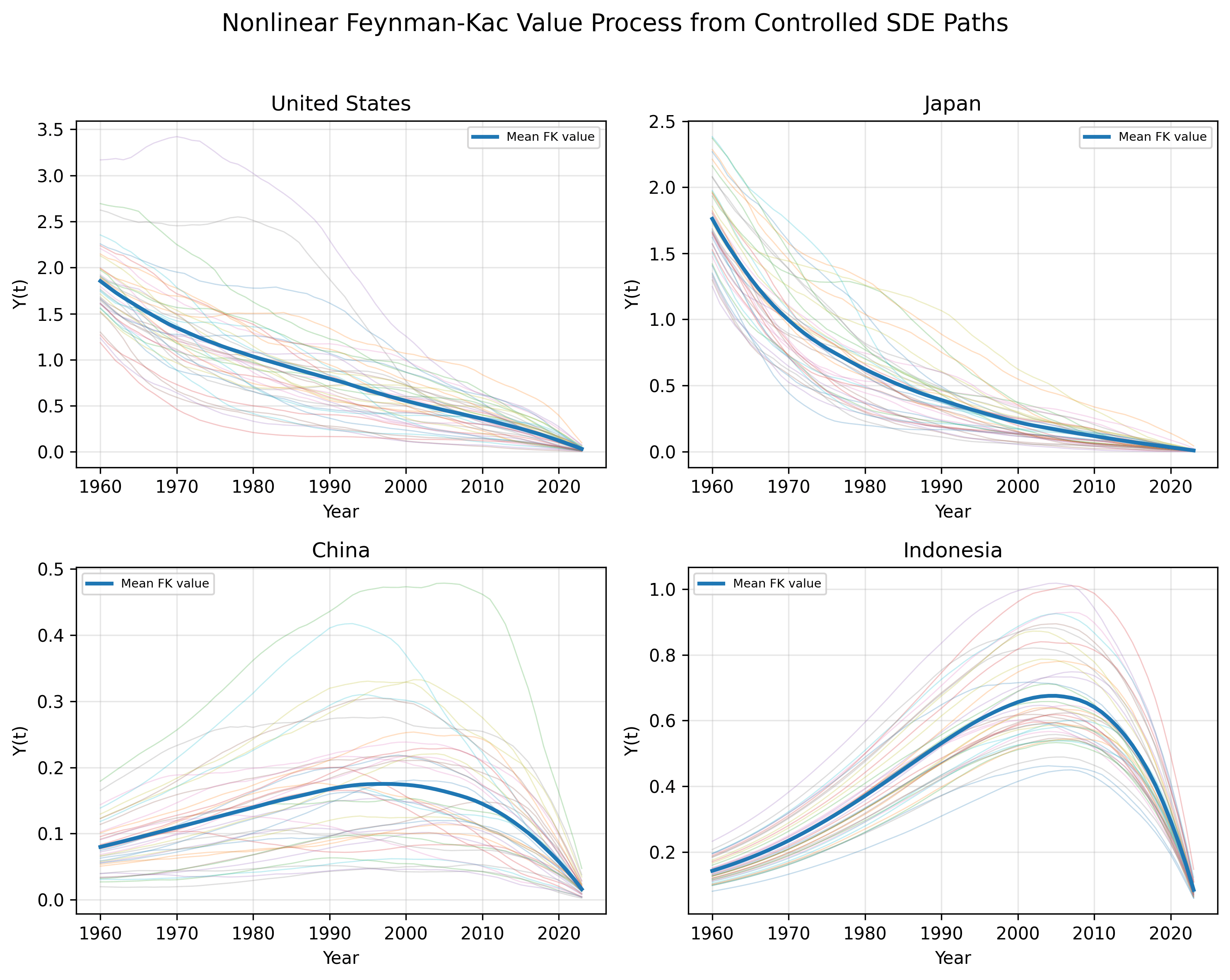}
		\end{subfigure}
		\caption{Nonlinear Feynman--Kac value process generated from controlled SDE paths.}
		\label{fig:controlled_fk_value}
	\end{figure}
	Although the shapes of the effort trajectories differ substantially across countries because of differences in estimated biological dynamics and volatility, the agreement between the HJB and nonlinear Feynman--Kac approaches remains exact in every case. From a stochastic control perspective, this result provides empirical verification of the theoretical equivalence established earlier in the paper. The HJB equation determines the optimal policy through dynamic programming and the optimization of the value function, whereas the nonlinear Feynman--Kac representation characterizes the same solution through a backward stochastic differential equation. The identity linking the two approaches transforms the value-function gradient into the martingale sensitivity process, ensuring that both methods recover the same feedback control law. Consequently, the figure provides both a visual and numerical validation of the theoretical results, demonstrating that the deterministic PDE formulation and the probabilistic BSDE formulation lead to identical optimal harvesting strategies when applied to real fisheries data. The nearly perfect overlap of the effort paths therefore confirms that the two methodologies are mathematically consistent, computationally equivalent, and capable of producing the same resource-management recommendations despite their different analytical representations.
	
	\begin{figure}[htbp]
		\centering
		\begin{subfigure}{.7\textwidth}
			\centering
			\includegraphics[width=.7\linewidth]{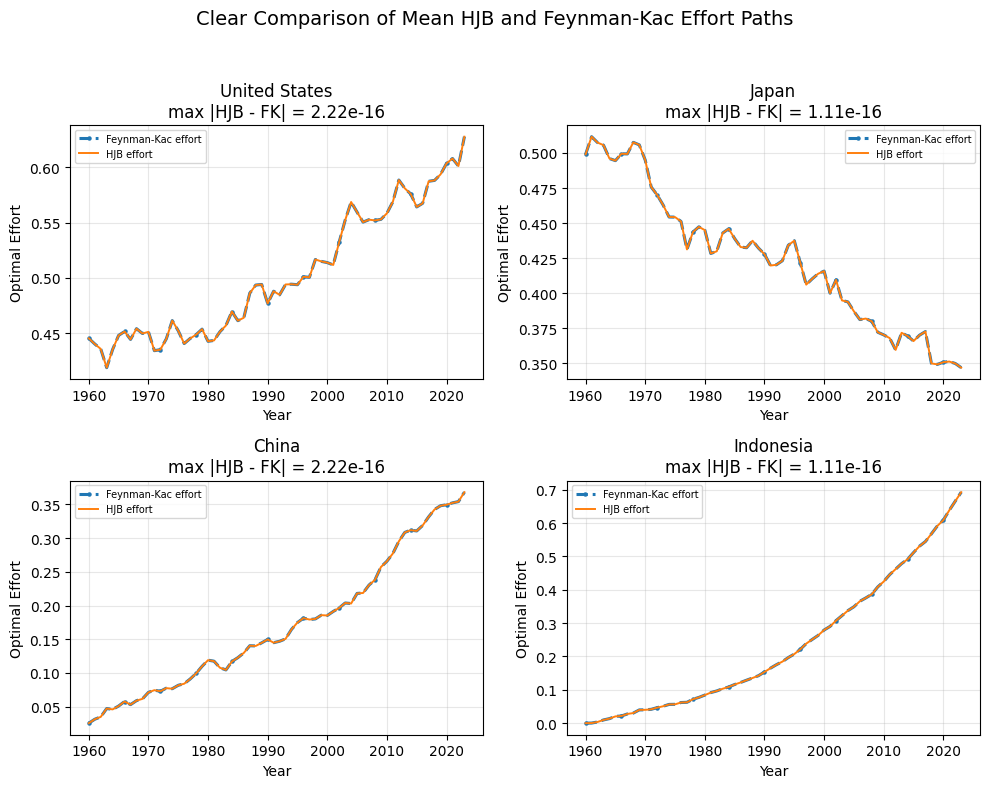}
		\end{subfigure}
		\caption{Comparison of mean HJB and nonlinear Feynman--Kac effort paths under the controlled SDE.}
		\label{fig:controlled_hjb_fk_comparison}
	\end{figure}

	\section{Discussion:}

	The present study developed a stochastic optimal harvesting framework that combines empirical fisheries data with modern stochastic control methods to examine resource management under uncertainty. Using annual capture fisheries production data obtained from the FAO/World Bank database, normalized state variables were constructed for China, Indonesia, Japan, and the United States, allowing countries with substantially different production scales to be analyzed within a common modeling framework. The empirical analysis revealed clear differences in long-term fisheries dynamics across the four countries. Indonesia exhibited the strongest positive drift, indicating the greatest tendency toward sustained growth in the normalized fisheries state, while Japan displayed a negative drift consistent with its long-run decline in capture fisheries production. The United States exhibited the highest estimated volatility, suggesting a greater degree of year-to-year variability in fisheries dynamics relative to the other countries. These empirical characteristics were incorporated directly into a controlled stochastic differential equation in which harvesting effort enters the drift component of the state process. Consequently, the model simultaneously captures biological growth tendencies, harvesting pressure, and stochastic environmental fluctuations within a unified probabilistic framework. Numerical simulations generated from the controlled state equation showed that the normalized fisheries state tends to decline over time for Japan and the United States, whereas China and Indonesia exhibit positive long-run growth trajectories. These results demonstrate how differences in estimated drift and volatility parameters can lead to substantially different resource outcomes even when all countries are analyzed under the same stochastic control structure.
	
	A central contribution of the paper is the derivation and comparison of two complementary approaches to the stochastic optimal harvesting problem: the HJB equation and the nonlinear Feynman--Kac representation. Within the dynamic programming framework, the HJB equation characterizes the value function through a nonlinear partial differential equation and yields an explicit feedback representation for the optimal harvesting effort. The resulting control rule adjusts continuously in response to changes in the resource state and therefore balances immediate harvesting benefits against the future value of resource conservation. Through the gradient of the value function, the HJB framework incorporates the marginal contribution of current resource abundance to future expected returns, producing a harvesting policy that adapts dynamically to changing ecological conditions. In parallel, the nonlinear Feynman--Kac representation reformulates the same optimization problem through a backward stochastic differential equation and provides a probabilistic interpretation of the value process. The process $Y(t)=J^*(t,X(t))$ represents the expected discounted value generated by the resource under optimal management, while the associated process $Z_t$ measures the sensitivity of the value process to stochastic environmental disturbances. Although these approaches originate from different mathematical perspectives, the theoretical analysis established that they characterize the same value function and generate the same optimal harvesting policy. This equivalence follows from the identity $Z_t=\sigma_iX_i(t)J_X,$
	which links the gradient term appearing in the HJB equation to the martingale component of the backward stochastic representation. Consequently, the deterministic optimization framework and the probabilistic pathwise formulation are shown to be mathematically consistent descriptions of the same stochastic control problem.
	
	The empirical implementation further strengthened the theoretical findings. By incorporating the estimated drift and volatility parameters into the controlled stochastic differential equation and applying the Euler--Maruyama approximation, the study generated simulated state trajectories, optimal harvesting paths, and nonlinear Feynman--Kac value processes for each country. The numerical results demonstrated that harvesting effort responds systematically to changes in the underlying resource state. Countries with declining state trajectories exhibited decreasing harvesting effort through time, reflecting the increasing value of resource conservation as stocks became relatively scarce, whereas countries with growing state trajectories displayed increasing effort levels consistent with expanding harvesting opportunities. The nonlinear Feynman--Kac value process provided additional insight into the expected future benefits associated with these management decisions and illustrated how uncertainty influences the evolution of expected resource value. Most importantly, the numerical comparison between the HJB-derived effort and the effort obtained from the nonlinear Feynman--Kac representation revealed differences on the order of $10^{-16}$, which is effectively zero from a computational perspective. These results provide strong empirical support for the theoretical equivalence established in the analytical sections of the paper and demonstrate that both methods recover identical harvesting policies despite relying on different mathematical representations. The simulations therefore confirm that the controlled stochastic framework can successfully connect empirical fisheries data, stochastic population dynamics, and optimal harvesting decisions within a single coherent structure.
	
	Despite these contributions, several limitations remain and provide opportunities for future research. First, annual capture fisheries production is used as a proxy for the underlying biological resource stock, and therefore the state variable may not fully capture ecological processes governing population abundance. Future studies would benefit from incorporating direct biomass estimates, stock assessment data, or other biological indicators that more accurately represent resource dynamics. Second, the marginal value of conservation was approximated through the specification $J_X\approx0.35(1-X)$ rather than being obtained from a complete numerical solution of the nonlinear HJB equation. While this approximation provides a practical mechanism for implementing the feedback control, solving the full nonlinear equation using advanced numerical methods would allow a more precise characterization of the value function and optimal policy. Third, the country-level aggregation employed in the analysis necessarily masks species-specific, regional, and ecosystem-level heterogeneity that may be important for resource management. Extending the framework to individual fisheries, multiple interacting species, or spatially structured ecosystems would substantially increase its ecological realism. Additional extensions could incorporate stochastic price dynamics, regulatory constraints, harvest quotas, subsidy programs, and broader economic variables that influence harvesting behavior. Nevertheless, the overall findings of this study demonstrate that controlled stochastic methods provide a rigorous and flexible foundation for analyzing renewable resource management under uncertainty. By linking empirical estimates of drift and volatility to a controlled stochastic differential equation, deriving optimal harvesting rules through the HJB equation, interpreting resource value through the nonlinear Feynman--Kac representation, and establishing the equivalence of these approaches through the relation $Z_t=\sigma_iX_i(t)J_X$, the paper develops a mathematically consistent framework for studying optimal harvesting decisions in uncertain environments and provides a foundation for future advances in stochastic resource management.
	\bibliographystyle{apalike}
	\bibliography{bib}

\begin{thebibliography}{}

\bibitem[Anderson et~al., 2026]{anderson2026obesity}
Anderson, V., Pramanik, P., and Robinson, H.~K. (2026).
\newblock Obesity and sociodemographic factors in luminal breast cancer.
\newblock {\em arXiv preprint arXiv:2605.30591}.

\bibitem[Bellman, 1957]{bellman_1957}
Bellman, R. (1957).
\newblock {\em Dynamic Programming}.
\newblock Princeton University Press, Princeton, NJ.

\bibitem[Brites and Braumann, 2023]{brites_braumann_2023}
Brites, N.~M. and Braumann, C.~A. (2023).
\newblock Harvesting optimization with stochastic differential equations
  models: Is the optimal enemy of the good?
\newblock {\em Stochastic Models}, 39(1):41--59.

\bibitem[Bulls et~al., 2025]{bulls2025assessing}
Bulls, S.~E., Finn, E., Sykora, P., Lynch, V.~J., Pramanik, P., Glaberman, S.,
  and Chiari, Y. (2025).
\newblock Assessing cometchip technology for dna damage studies in non-model
  species: distinct uv-induced responses in turtles and mammals.
\newblock {\em BMC Research Notes}, 18(1):1--7.

\bibitem[Carey et~al., 2006]{carey2006race}
Carey, L., Perou, C., Livasy, C., Dressler, L., Cowan, D., Conway, K., Karaca,
  G., Troester, M., Tse, C., and Edmiston, S. (2006).
\newblock Race, breast cancer subtypes, and survival in the carolina breast
  cancer study.
\newblock {\em JAMA}, 295(21):2492--2502.

\bibitem[Dasgupta et~al., 2023]{dasgupta2023frequent}
Dasgupta, S., Acharya, S., Khan, M.~A., Pramanik, P., Marbut, S.~M., Yunus, F.,
  Galeas, J.~N., Singh, S., Singh, A.~P., and Dasgupta, S. (2023).
\newblock Frequent loss of cacna1c, a calcium voltage-gated channel subunit is
  associated with lung adenocarcinoma progression and poor prognosis.
\newblock {\em Cancer Research}, 83(7\_Supplement):3318--3318.

\bibitem[Dasgupta et~al., 2026]{dasgupta2026frequent}
Dasgupta, S., Galappaththi, S.~L., Banerjee, R., Alsatari, E.~S., Pramanik, P.,
  Marbut, S.~M., Yunus, F., Galeas, J.~N., and Dasgupta, S. (2026).
\newblock Frequent loss of cacna1c is associated with poor prognosis in
  non-small cell lung cancer.
\newblock {\em The FASEB Journal}, 40(4):e71614.

\bibitem[Dunbar et~al., 2026]{dunbar2026modeling}
Dunbar, B., Pramanik, P., and Robinson, H.~K. (2026).
\newblock Modeling age-adjusted mortality in the united states.
\newblock {\em arXiv preprint arXiv:2601.13504}.

\bibitem[Ellington et~al., 2025a]{ellington2025metascorelens}
Ellington, C., Pramanik, P., and Robinson, H.~K. (2025a).
\newblock Metascorelens: Evaluating user feedback across digital entertainment
  systems.
\newblock {\em arXiv preprint arXiv:2601.11523}.

\bibitem[Ellington et~al., 2025b]{ellington2025playmydata}
Ellington, C., Pramanik, P., and Robinson, H.~K. (2025b).
\newblock Playmydata: A statistical analysis of a video game dataset on review
  scores and gaming platforms.
\newblock {\em Analytics}, 4(4):31.

\bibitem[Feynman, 1948]{feynman_1949}
Feynman, R.~P. (1948).
\newblock Space-time approach to non-relativistic quantum mechanics.
\newblock {\em Reviews of Modern Physics}, 20(2):367--387.

\bibitem[{Food and Agriculture Organization of the United Nations},
  2026]{fao_capture_database_2026}
{Food and Agriculture Organization of the United Nations} (2026).
\newblock Global capture production database.
\newblock FAO Fisheries and Aquaculture Statistics.
\newblock Capture fisheries production data by country, species item, and
  fishing area.

\bibitem[Gaudet et~al., 2011]{gaudet2011risk}
Gaudet, M., Press, M., Haile, R., Lynch, C., Glaser, S., Schildkraut, J.,
  Gammon, M., Douglas~Thompson, W., and Bernstein, J. (2011).
\newblock Risk factors by molecular subtypes of breast cancer across a
  population-based study of women 56 years or younger.
\newblock {\em Breast Cancer Research and Treatment}, 130:587--597.

\bibitem[Hening and Tran, 2020]{hening_tran_2020}
Hening, A. and Tran, K.~Q. (2020).
\newblock Harvesting and seeding of stochastic populations: Analysis and
  numerical approximation.
\newblock {\em Journal of Mathematical Biology}, 81(1):65--112.

\bibitem[Hertweck et~al., 2023]{hertweck2023clinicopathological}
Hertweck, K.~L., Vikramdeo, K.~S., Galeas, J.~N., Marbut, S.~M., Pramanik, P.,
  Yunus, F., Singh, S., Singh, A.~P., and Dasgupta, S. (2023).
\newblock Clinicopathological significance of unraveling mitochondrial pathway
  alterations in non-small-cell lung cancer.
\newblock {\em The FASEB Journal}, 37(7):e23018.

\bibitem[Hua et~al., 2019]{hua2019assessing}
Hua, L., Polansky, A., and Pramanik, P. (2019).
\newblock Assessing bivariate tail non-exchangeable dependence.
\newblock {\em Statistics \& Probability Letters}, 155:108556.

\bibitem[Imai et~al., 2010]{imai2010identification}
Imai, K., Keele, L., and Yamamoto, T. (2010).
\newblock Identification, inference and sensitivity analysis for causal
  mediation effects.
\newblock {\em Project euclid}.

\bibitem[Kac, 1949]{kac_1949}
Kac, M. (1949).
\newblock On distributions of certain wiener functionals.
\newblock {\em Transactions of the American Mathematical Society}, 65(1):1--13.

\bibitem[Kakkat et~al., 2023]{kakkat2023cardiovascular}
Kakkat, S., Pramanik, P., Singh, S., Singh, A.~P., Sarkar, C., and Chakroborty,
  D. (2023).
\newblock Cardiovascular complications in patients with prostate cancer:
  Potential molecular connections.
\newblock {\em International Journal of Molecular Sciences}, 24(8):6984.

\bibitem[Kakkat et~al., 2026]{kakkat2026angiotensin}
Kakkat, S., Suman, P., Goswami, S., Kola, B., Bruno, K.~A., Frankel, W.~L.,
  Basu, S., Turbat-Herrera, E.~A., Ramirez-Alcantara, V., Heslin, M.~J., et~al.
  (2026).
\newblock Angiotensin ii type 1 receptor blockade inhibits gastric cancer
  metastasis through tight junction restoration.
\newblock {\em bioRxiv}, pages 2026--01.

\bibitem[Khan et~al., 2023]{khan2023myb}
Khan, M.~A., Acharya, S., Anand, S., Sameeta, F., Pramanik, P., Keel, C.,
  Singh, S., Carter, J.~E., Dasgupta, S., and Singh, A.~P. (2023).
\newblock Myb exhibits racially disparate expression, clinicopathologic
  association, and predictive potential for biochemical recurrence in prostate
  cancer.
\newblock {\em Iscience}, 26(12).

\bibitem[Khan et~al., 2024]{khan2024mp60}
Khan, M.~A., Acharya, S., Kreitz, N., Anand, S., Sameeta, F., Pramanik, P.,
  Keel, C., Singh, S., Carter, J., Dasgupta, S., et~al. (2024).
\newblock Mp60-05 myb exhibits racially disparate expression and
  clinicopathologic association and is a promising predictor of biochemical
  recurrence in prostate cancer.
\newblock {\em Journal of Urology}, 211(5S):e1000.

\bibitem[Lungu and {\O}ksendal, 2001]{lungu_oksendal_2001}
Lungu, E. and {\O}ksendal, B. (2001).
\newblock Optimal harvesting from interacting populations in a stochastic
  environment.
\newblock {\em Bernoulli}, 7(3):527--539.

\bibitem[Maki et~al., 2025]{maki2025new}
Maki, E., Glimm, T., Pramanik, P., Chiari, Y., and Kiskowski, M. (2025).
\newblock New approaches for capturing and estimating variation in complex
  animal color patterns from digital photographs: application to the eastern
  box turtle (terrapene carolina).
\newblock {\em PeerJ}, 13:e19690.

\bibitem[Polansky and Pramanik, 2021]{polansky2021motif}
Polansky, A.~M. and Pramanik, P. (2021).
\newblock A motif building process for simulating random networks.
\newblock {\em Computational Statistics \& Data Analysis}, 162:107263.

\bibitem[Powell and Pramanik, 2025]{powell2025genomic}
Powell, A. and Pramanik, P. (2025).
\newblock Genomic influence of a key transcription factor in male glandular
  malignancy.
\newblock {\em arXiv preprint arXiv:2510.11959}.

\bibitem[Powell and Pramanik, 2026]{powell2026role}
Powell, A. and Pramanik, P. (2026).
\newblock The role of myb in prostate cancer: A statistical analysis.
\newblock {\em European Journal of Statistics}, 6:1--1.

\bibitem[Pramanik, 2016]{pramanik2016}
Pramanik, P. (2016).
\newblock {\em Tail non-exchangeability}.
\newblock Northern Illinois University.

\bibitem[Pramanik, 2020]{pramanik2020optimization}
Pramanik, P. (2020).
\newblock Optimization of market stochastic dynamics.
\newblock {\em SN Operations Research Forum}, 1(4):31.

\bibitem[Pramanik, 2021a]{pramanik2021}
Pramanik, P. (2021a).
\newblock Effects of water currents on fish migration through a feynman-type
  path integral approach under $\sqrt {8/3}$ liouville-like quantum gravity
  surfaces.
\newblock {\em Theory in Biosciences}, 140(2):205--223.

\bibitem[Pramanik, 2021b]{pramanik2021thesis}
Pramanik, P. (2021b).
\newblock {\em Optimization of Dynamic Objective Functions Using Path
  Integrals}.
\newblock PhD thesis, Northern Illinois University.

\bibitem[Pramanik, 2022]{pramanik2022stochastic}
Pramanik, P. (2022).
\newblock Stochastic control of a sir model with non-linear incidence rate
  through euclidean path integral.
\newblock {\em arXiv preprint arXiv:2209.13733}.

\bibitem[Pramanik, 2023a]{pramanik2021consensus}
Pramanik, P. (2023a).
\newblock Consensus as a nash equilibrium of a stochastic differential game.
\newblock {\em European Journal of Statistics}, 3:10--10.

\bibitem[Pramanik, 2023b]{pramanik2023cmbp}
Pramanik, P. (2023b).
\newblock Optimal lock-down intensity: A stochastic pandemic control approach
  of path integral.
\newblock {\em Computational and Mathematical Biophysics}, 11(1):20230110.

\bibitem[Pramanik, 2023c]{pramanik2023cont}
Pramanik, P. (2023c).
\newblock Path integral control in infectious disease modeling.
\newblock {\em arXiv preprint arXiv:2311.02113}.

\bibitem[Pramanik, 2023d]{pramanik2023path}
Pramanik, P. (2023d).
\newblock Path integral control of a stochastic multi-risk sir pandemic model.
\newblock {\em Theory in Biosciences}, pages 1--36.

\bibitem[Pramanik, 2024a]{pramanik2024dependence}
Pramanik, P. (2024a).
\newblock Dependence on tail copula.
\newblock {\em J}, 7(2):127--152.

\bibitem[Pramanik, 2024b]{pramanik2024estimation}
Pramanik, P. (2024b).
\newblock Estimation of optimal lock-down and vaccination rate of a stochastic
  sir model: A mathematical approach.
\newblock {\em European Journal of Statistics}, 4:3--3.

\bibitem[Pramanik, 2024c]{pramanik2024measuring}
Pramanik, P. (2024c).
\newblock Measuring asymmetric tails under copula distributions.
\newblock {\em European Journal of Statistics}, 4:7--7.

\bibitem[Pramanik, 2024d]{pramanik2024estimation1}
Pramanik, P. (2024d).
\newblock On estimation of function-on-function regression kernels with
  brownian berkson errors.

\bibitem[Pramanik, 2024e]{pramanik20242estimation}
Pramanik, P. (2024e).
\newblock On estimation of function-on-function regression kernels with
  brownian berkson errors.

\bibitem[Pramanik, 2024f]{pramanik2024stochastic}
Pramanik, P. (2024f).
\newblock Stochastic control in determining a soccer player’s performance.
\newblock {\em J. Compr. Pure Appl. Math}, 2:111.

\bibitem[Pramanik, 2025a]{pramanik2025construction}
Pramanik, P. (2025a).
\newblock Construction of an optimal strategy: An analytic insight through path
  integral control driven by a mckean--vlasov opinion dynamics.
\newblock {\em Mathematics}, 13(17):2842.

\bibitem[Pramanik, 2025b]{pramanik2025optimal}
Pramanik, P. (2025b).
\newblock Optimal feedback control in social networks in a
  mckean-vlasov-friedkin-johnsen system.
\newblock {\em arXiv preprint arXiv:2508.17138}.

\bibitem[Pramanik, 2025c]{pramanik2025optimal1}
Pramanik, P. (2025c).
\newblock An optimal level of stubbornness to win a soccer match.
\newblock {\em arXiv preprint arXiv:2501.18050}.

\bibitem[Pramanik, 2025d]{pramanik2025stubbornness}
Pramanik, P. (2025d).
\newblock Stubbornness as control in professional soccer games: A bppsde
  approach.
\newblock {\em Mathematics}, 13(3):475.

\bibitem[Pramanik, 2026a]{pramanik2026quantum}
Pramanik, P. (2026a).
\newblock The quantum structure of markets: Linking hamiltonian-jacobi-bellman
  dynamics to schrodinger equation through feynman action.
\newblock {\em arXiv preprint arXiv:2603.25086}.

\bibitem[Pramanik, 2026b]{pramanik2026strategic}
Pramanik, P. (2026b).
\newblock Strategic dynamics of firms via path integral control.
\newblock {\em International Game Theory Review}, page 2650006.

\bibitem[Pramanik et~al., 2024a]{pramanik2024parametric}
Pramanik, P., Boone, E.~L., and Ghanam, R.~A. (2024a).
\newblock Parametric estimation in fractional stochastic differential equation.
\newblock {\em Stats}, 7(3):745.

\bibitem[Pramanik and Dong, 2025a]{pramanik2025impact}
Pramanik, P. and Dong, L. (2025a).
\newblock Impact of random monetary shock: a keynesian case.
\newblock {\em arXiv preprint arXiv:2505.00800}.

\bibitem[Pramanik and Dong, 2025b]{pramanik2025strategic}
Pramanik, P. and Dong, L. (2025b).
\newblock Strategic complementarities due to monetary shock under sticky price.
\newblock {\em European Journal of Statistics}, 5:9--9.

\bibitem[Pramanik et~al., 2024b]{pramanik2024analysis}
Pramanik, P., Graff, J., and DeCaro, M. (2024b).
\newblock Analysis of a tiered pricing model for ecw clients.
\newblock In {\em International Conference on Mathematical Modeling in Physical
  Sciences}, pages 515--535. Springer.

\bibitem[Pramanik et~al., 2025a]{pramanik2025dissecting}
Pramanik, P., Graff, J., and Decaro, M. (2025a).
\newblock Dissecting multi-level pricing schemes in the context of ecw client
  engagement.
\newblock {\em arXiv preprint arXiv:2509.22669}.

\bibitem[Pramanik et~al., 2025b]{pramanik2025factors}
Pramanik, P., Graff, J., and Decaro, M. (2025b).
\newblock On factors influencing consumer preference in pipeline stages: an
  experiment.
\newblock {\em arXiv preprint arXiv:2501.03418}.

\bibitem[Pramanik et~al., 2025c]{pramanik2025strategies}
Pramanik, P., Graff, J., and Decaro, M. (2025c).
\newblock Strategies to increase pipeline status: A case study from eclinical
  data.
\newblock {\em European Journal of Statistics}, 5:3--3.

\bibitem[Pramanik and Maity, 2024]{pramanik2024bayes}
Pramanik, P. and Maity, A.~K. (2024).
\newblock Bayes factor of zero inflated models under jeffereys prior.
\newblock {\em arXiv preprint arXiv:2401.03649}.

\bibitem[Pramanik et~al., 2026]{pramanik2026bayesian}
Pramanik, P., Maity, A.~K., Mandal, A., and Robinson, H.~K. (2026).
\newblock A bayesian discrete framework for enhancing decision-making processes
  in clinical trial designs and evaluations.
\newblock {\em arXiv preprint arXiv:2601.10615}.

\bibitem[Pramanik and Polansky, 2020]{pramanik2020motivation}
Pramanik, P. and Polansky, A.~M. (2020).
\newblock Motivation to run in one-day cricket.
\newblock {\em arXiv preprint arXiv:2001.11099}.

\bibitem[Pramanik and Polansky, 2021]{pramanik2021optimala}
Pramanik, P. and Polansky, A.~M. (2021).
\newblock Optimal estimation of brownian penalized regression coefficients.
\newblock {\em arXiv preprint arXiv:2107.02291}.

\bibitem[Pramanik and Polansky, 2023a]{pramanik2023optimization001}
Pramanik, P. and Polansky, A.~M. (2023a).
\newblock Optimization of a dynamic profit function using euclidean path
  integral.
\newblock {\em SN Business \& Economics}, 4(1):8.

\bibitem[Pramanik and Polansky, 2023b]{pramanik2021scoring}
Pramanik, P. and Polansky, A.~M. (2023b).
\newblock Scoring a goal optimally in a soccer game under liouville-like
  quantum gravity action.
\newblock {\em Operations Research Forum}, 4(3):66.

\bibitem[Pramanik and Polansky, 2023c]{pramanik2023semicooperation}
Pramanik, P. and Polansky, A.~M. (2023c).
\newblock Semicooperation under curved strategy spacetime.
\newblock {\em The Journal of Mathematical Sociology}, pages 1--35.

\bibitem[Pramanik and Polansky, 2024]{pramanik2024motivation}
Pramanik, P. and Polansky, A.~M. (2024).
\newblock Motivation to run in one-day cricket.
\newblock {\em Mathematics}, 12(17):2739.

\bibitem[Valdez and Pramanik, 2025a]{valdez2025association}
Valdez, I. and Pramanik, P. (2025a).
\newblock Association between obesity, race, and luminal subtypes of breast
  cancer.
\newblock {\em European Journal of Statistics}, 5:12--12.

\bibitem[Valdez and Pramanik, 2025b]{valdez2025exploring}
Valdez, I. and Pramanik, P. (2025b).
\newblock Exploring the interplay of adiposity, ethnicity, and hormone receptor
  profiles in breast cancer subtypes.
\newblock {\em arXiv preprint arXiv:2507.21348}.

\bibitem[Vikramdeo et~al., 2024a]{vikramdeo2024abstract}
Vikramdeo, K., Anand, S., Sudan, S., Pramanik, P., Singh, S., Godwin, A.,
  Singh, A., and Dasgupta, S. (2024a).
\newblock Abstract po3-16-05: Mitochondrial dna mutation detection in tumors
  and circulating extracellular vesicles of triple negative breast cancer
  patients for biomarker development.
\newblock {\em Cancer Research}, 84(9\_Supplement):PO3--16.

\bibitem[Vikramdeo et~al., 2024b]{vikramdeo2024mitochondrial}
Vikramdeo, K., Anand, S., Sudan, S., Pramanik, P., Singh, S., Godwin, A.,
  Singh, A., and Dasgupta, S. (2024b).
\newblock Mitochondrial dna mutation detection in tumors and circulating
  extracellular vesicles of triple negative breast cancer patients for
  biomarker development.
\newblock In {\em CANCER RESEARCH}, volume~84. AMER ASSOC CANCER RESEARCH 615
  CHESTNUT ST, 17TH FLOOR, PHILADELPHIA, PA~….

\bibitem[Vikramdeo et~al., 2023]{vikramdeo2023profiling}
Vikramdeo, K.~S., Anand, S., Sudan, S.~K., Pramanik, P., Singh, S., Godwin,
  A.~K., Singh, A.~P., and Dasgupta, S. (2023).
\newblock Profiling mitochondrial dna mutations in tumors and circulating
  extracellular vesicles of triple-negative breast cancer patients for
  potential biomarker development.
\newblock {\em FASEB BioAdvances}, 5(10):412.

\bibitem[{World Bank}, 2026]{worldbank_capture_2026}
{World Bank} (2026).
\newblock Capture fisheries production (metric tons), indicator er.fsh.capt.mt.
\newblock World Development Indicators.
\newblock Source: Food and Agriculture Organization of the United Nations
  (FAO). Data range used in this study: 1960--2023.

\bibitem[Yusuf and Pramanik, 2025a]{yusuf2025predictive}
Yusuf, A. and Pramanik, P. (2025a).
\newblock Predictive significance of cd276/b7-h3 expression in baseline
  biopsies of advanced prostate carcinoma.
\newblock {\em arXiv preprint arXiv:2508.09373}.

\bibitem[Yusuf and Pramanik, 2025b]{yusuf2025prognostic}
Yusuf, A. and Pramanik, P. (2025b).
\newblock Prognostic role of b7-h3 (cd276) expression in initial biopsies of
  metastatic prostate cancer.
\newblock {\em Onco}, 5(3):38.

\end{thebibliography}
\end{document}